\begin{document}

\newcommand{\V}{{\mathcal V}}      
\renewcommand{\O}{{\mathcal O}}
\newcommand{\LL}{\mathcal L}
\newcommand{\Ext}{\hbox{\rm Ext}}
\newcommand{\Tor}{\hbox{\rm Tor}}
\newcommand{\Hom}{\hbox{Hom}}
\newcommand{\Proj}{\hbox{Proj}}
\newcommand{\GrMod}{\hbox{GrMod}}
\newcommand{\grmod}{\hbox{gr-mod}}
\newcommand{\tors}{\hbox{tors}}
\newcommand{\rank}{\hbox{rank}}
\newcommand{\End}{\hbox{{\rm End}}}
\newcommand{\Der}{\hbox{Der}}
\newcommand{\GKdim}{\hbox{GKdim}}
\newcommand{\im}{\hbox{\rm im}}
\renewcommand{\ker}{\hbox{ker}}
\newcommand{\coker}{{\rm coker} }
\newcommand{\ol}{\overline}

\renewcommand{\c}{\cancel}
\newcommand{\g}{\frak g}
\newcommand{\h}{\frak h}
\newcommand{\fp}{\frak p}
\newcommand{\m}{{\mu}}
\newcommand{\gl}{{\frak g}{\frak l}}
\newcommand{\ssl}{{\frak s}{\frak l}}

\newcommand{\ds}{\displaystyle}
\newcommand{\s}{\sigma}
\renewcommand{\l}{\lambda}
\renewcommand{\a}{\alpha}
\renewcommand{\b}{\beta}
\newcommand{\G}{\Gamma}
\newcommand{\z}{\zeta}
\newcommand{\e}{\epsilon}
\renewcommand{\d}{\delta}
\newcommand{\p}{\rho}
\renewcommand{\t}{\tau}

\newcommand{\C}{{\mathbb C}}
\newcommand{\N}{{\Bbb N}}
\newcommand{\Z}{{\mathbb Z}}
\newcommand{\ZZ}{{\Bbb Z}}
\newcommand{\Q}{{\Bbb Q}}
\renewcommand{\k}{\Bbbk}
\newcommand{\Gbar}{{\overline G}}
\newcommand{\Fbar}{{\overline F}}
\newcommand{\K}{{\mathcal K}}
\newcommand{\tw}{{\rm tw}}
\newcommand{\utw}{{\underline\tw}}
\newcommand{\Cscr}{{\mathscr C}}
\renewcommand{\P}{{\Bbb P}}

\newcommand{\la}{\langle}
\newcommand{\ra}{\rangle}
\newcommand{\tensor}{\otimes}
\newcommand{\tsr}{\tensor}

\newtheorem{thm}{Theorem}[section]
\newtheorem{lemma}[thm]{Lemma}
\newtheorem{cor}[thm]{Corollary}
\newtheorem{prop}[thm]{Proposition}

\theoremstyle{definition}
\newtheorem{defn}[thm]{Definition}
\newtheorem{hypothesis}[thm]{Hypothesis}
\newtheorem{standinghypothesis}[thm]{Standing Hypothesis}
\newtheorem{notn}[thm]{Notation}
\newtheorem{defnotn}[thm]{Definition-Notation}
\newtheorem{ex}[thm]{Example}
\newtheorem{rmk}[thm]{Remark}
\newtheorem{rmks}[thm]{Remarks}
\newtheorem{note}[thm]{Note}
\newtheorem{example}[thm]{Example}
\newtheorem{problem}[thm]{Problem}
\newtheorem{ques}[thm]{Question}
\newtheorem{thingy}[thm]{}
\newcommand{\kk}{\Bbbk}
\newcommand{\blue}{\textcolor{blue}}
\newcommand{\red}{\textcolor{red}}
\newcommand{\magenta}{\textcolor{magenta}}

\thispagestyle{empty}

\title[Noncommutative Kn\"{o}rrer periodicity]
{Noncommutative Kn\"{o}rrer periodicity and\\ noncommutative Kleinian singularities}
\author[A. Conner]{Andrew Conner}
\address{
Department of Mathematics and Computer Science \\
Saint Mary's College of California\\
Moraga, CA 94575, 
USA
}
\email{abc12@stmarys-ca.edu}
\author[E. Kirkman]{Ellen Kirkman}
\address{
Department of Mathematics \\
Wake Forest University \\
Winston-Salem, NC 27109,
USA
}
\email{kirkman@wfu.edu}
\author[W. F. Moore]{W. Frank Moore}
\address{
Department of Mathematics \\
Wake Forest University \\
Winston-Salem, NC 27109,
USA
}
\email{moorewf@wfu.edu}
\author[C. Walton]{Chelsea Walton}
\address{
Department of Mathematics \\
The University of Illinois at Urbana-Champaign \\
Urbana, IL 61801, 
USA
}
\email{notlaw@illinois.edu}

\keywords{Kn\"{o}rrer periodicity, maximal Cohen-Macaulay module, noncommutative invariant theory, twisted matrix factorization}
\subjclass[2010]{16E65, 16G50, 16W22, 16S38}

\begin{abstract} 
We establish a version of Kn\"{o}rrer's Periodicity Theorem in the context of noncommutative
invariant theory. Namely, let $A$ be a left noetherian AS-regular algebra, 
let $f$ be a normal and regular element of $A$ of positive degree, and take $B=A/(f)$.
Then there exists a bijection between the set of isomorphism classes of indecomposable non-free maximal 
Cohen-Macaulay modules over $B$ and those over (a noncommutative analog of) its second double branched cover 
$(B^\#)^\#$. Our results use and extend the study of twisted matrix factorizations, which was introduced by 
the first three authors with Cassidy. These results are applied to the noncommutative Kleinian singularities 
studied by the second and fourth authors with Chan and Zhang.
\end{abstract}

\maketitle

\bibliographystyle{abbrv}  

\setcounter{tocdepth}{2} 

\section{Introduction}
Throughout let $\k$ be an algebraically closed field of characteristic
zero, and $S$ be a noetherian $\k$-algebra.  When $S = \k[x_1, x_2]^G$
is the ring of invariants under the action of a finite subgroup $G$ of
SL$_2(\k)$, acting linearly on $\k[x_1,x_2]$, then $S = \k[z_1, z_2,
  z_3]/(f)$ is the coordinate ring of a hypersurface in affine
3-space, namely that of a Kleinian singularity. (In this case, we
refer to the ring $S$ as a (commutative) Kleinian singularity as
well.)
The ring $S$ has finite Cohen-Macaulay type, and the indecomposable
maximal (graded) Cohen-Macaulay (MCM) $S$-modules can be given
explicitly; they are presented in terms of matrix factorizations in
\cite{GSV84} (see also \cite[Chapter 9, $\mathsection$4]{LW}). One of
our achievements in this work is that we use the theory of {\it
  twisted matrix factorizations} from \cite{CCKM} to further the study
of MCM modules over (the coordinate rings of) `noncommutative
hypersurfaces'. In doing so, we obtain a noncommutative version of
Kn\"{o}rrer's Periodicity Theorem \cite{knor} (see also
\cite[Theorem~8.33]{LW}).

To obtain an explicit description of MCMs in an analogous
noncommutative invariant theory context, commutative polynomial rings
are replaced by noetherian (connected graded) Artin-Schelter (AS-)
regular algebras (Definition \ref{ASregular}), which share homological
properties with commutative polynomial rings. The analog of a finite
subgroup of SL$_2(\k)$ acting linearly on $\k[x_1,x_2]$ is a
finite-dimensional Hopf algebra $H$ that acts on an AS-regular algebra
$C$ of Gelfand-Kirillov dimension 2 inner faithfully, preserving the
grading of $C$, and with trivial homological determinant.  These Hopf
algebras, called ``quantum binary polyhedral groups", were classified
in \cite{CKWZ0}.  In \cite{CKWZ2} and \cite{CKWZ}, when $H$ is
semisimple, an analog of the classical McKay correspondence was
obtained; the fixed ring $C^H$ under each of these actions was
computed, and was shown to be a ``hypersurface'' in an AS-regular
algebra of dimension 3. That is, $C^H$ is an algebra of the form $B =
A/(f)$, where $A$ is an AS-regular algebra of dimension 3 and $f$ is a
normal element of $ A$. Hence $B$ may be regarded as a {\em
  noncommutative Kleinian singularity}.  The element $f$ associated to
each $C^H$ was given explicitly in \cite[Table 3]{CKWZ} {(see Table
  \ref{tab:kleinian} of Section 6)}.  With a suitable definition of
maximal Cohen-Macaulay (MCM) module (Definition \ref{def:MCM}), the
following result from \cite{CKWZ2} summarizes the McKay correspondence
in this setting. Note that a $C$-module $M$ is called {\em initial} if
it is a graded module, generated in degree 0, so when $C$ is
$\mathbb{N}$-graded, $M_{<0} = 0$.
\begin{thm} \cite[Theorems A and C]{CKWZ2} \label{McKayII}
Let $C$ be a noetherian AS-regular algebra of dimension 2 that admits an inner 
faithful action of a semisimple Hopf algebra~$H$, preserving the grading, and with trivial 
homological determinant. There are bijective correspondences between the isomorphism 
classes of:
\begin{enumerate}
\item[\textnormal{(a)}] 
indecomposable direct summands of $C$ as left 
$C^{H}$-modules;
\item[\textnormal{(b)}] 
indecomposable finitely generated, projective, initial left
$\End_{ C^{H}} \hspace{-.02in} C$~-modules;
\item[\textnormal{(c)}] 
indecomposable finitely generated, projective, initial left 
$C\# H$-modules; 
\item[\textnormal{(d)}] 
simple left $H$-modules; and
\item[\textnormal{(e)}] indecomposable {\rm MCM} left 
$C^H$-modules, up to a degree shift.
\end{enumerate}
\end{thm}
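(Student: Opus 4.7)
The plan is to organize the five classes around the set (d) of simple $H$-modules as a pivot, deducing each pairwise bijection from a small number of structural facts: a noncommutative Auslander-type isomorphism $C\#H \cong \End_{C^H}(C)$, the semisimplicity of $H$, and the AS-Gorenstein behavior of $C^H$ forced by the trivial homological determinant. The starting move is to construct the natural homomorphism $\Phi\colon C\#H \to \End_{C^H}(C)$ given by $c\#h \mapsto (c' \mapsto c\,(h\cdot c'))$ and to verify that $\Phi$ is an isomorphism under the standing hypotheses (semisimple $H$, AS-regular $C$ of GK-dimension~$2$, inner faithful action, trivial homological determinant). Granting this, the module categories of $C\#H$ and $\End_{C^H}(C)$ are identified as graded abelian categories, and taking finitely generated, projective, initial objects produces the bijection \textnormal{(b)}$\leftrightarrow$\textnormal{(c)}.

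Using semisimplicity of $H$, write $H \cong \bigoplus_i V_i^{\oplus \dim V_i}$ as a left $H$-module with $\{V_i\}$ a complete set of simples. Correspondingly, the indecomposable finitely generated projective initial $C\#H$-modules are precisely the $P_i = C \otimes V_i$, one per simple $H$-module, yielding \textnormal{(c)}$\leftrightarrow$\textnormal{(d)}. On the other side, the $(C^H,H)$-bimodule structure on $C$ gives an isotypic decomposition $C \cong \bigoplus_i M_i \otimes V_i^{\ast}$ as left $C^H$-modules; inner faithfulness of the action forces every $V_i$ to appear with nonzero multiplicity, and standard Hilbert-style finiteness for semisimple Hopf actions on noetherian graded algebras ensures that each $M_i$ is a finitely generated, indecomposable $C^H$-module. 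This establishes \textnormal{(a)}$\leftrightarrow$\textnormal{(d)}.

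The substantive content is \textnormal{(a)}$\leftrightarrow$\textnormal{(e)}. I would first show that each $M_i$ is MCM over $C^H$: the trivial homological determinant hypothesis implies $C^H$ is AS-Gorenstein of the same GK-dimension as $C$, and then the finite extension $C^H \hookrightarrow C$ between AS-Gorenstein algebras of the same dimension makes $C$ itself a MCM $C^H$-module, whence every summand $M_i$ is MCM. Conversely, one must show that $C$ is a \emph{representation generator} for the category of graded MCM $C^H$-modules modulo shift, so that every indecomposable MCM occurs, up to degree shift, as some $M_i$. I would deduce this by transporting an arbitrary indecomposable MCM $C^H$-module through the functor $\Hom_{C^H}(C,-)$ to a finitely generated projective initial $\End_{C^H}(C)$-module, and then running the chain \textnormal{(b)}$\leftrightarrow$\textnormal{(c)}$\leftrightarrow$\textnormal{(d)}$\leftrightarrow$\textnormal{(a)} backwards. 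The hard step---and the main obstacle---is precisely this representation-generator property: showing that no indecomposable MCM escapes the summand decomposition of $C$ is the noncommutative analog of Auslander's classical theorem on Kleinian singularities, and it relies essentially on AS-regularity of $C$, the dimension-$2$ hypothesis, and the trivial homological determinant in order to control the exactness properties of $\Hom_{C^H}(C,-)$ and to rule out ``extra'' indecomposable MCM modules not captured by the Hopf action.
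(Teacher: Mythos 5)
First, a point about context: the paper does not prove Theorem \ref{McKayII} at all --- it is imported verbatim from \cite[Theorems A and C]{CKWZ2} as background, so there is no internal proof to compare against. Your proposal is therefore really an outline of the argument of \cite{CKWZ2}, and as an outline it does follow the right strategy (pivot on the simple $H$-modules, use the Auslander-type isomorphism, use the Gorenstein property of $C^H$ coming from trivial homological determinant). But as a proof it has two genuine gaps, both of which are exactly where the content of \cite{CKWZ2} lives.

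The first gap is the step ``verify that $\Phi\colon C\#H\to \End_{C^H}(C)$ is an isomorphism.'' This is not a verification; it is the Auslander theorem for Hopf actions, one of the main theorems of \cite{CKWZ2}, and it fails without the trivial homological determinant hypothesis (e.g.\ for reflection-type actions where $C^H$ is again regular). Injectivity can be extracted from inner faithfulness, but surjectivity requires a substantial argument (in \cite{CKWZ2} it goes through showing the cokernel of $\Phi$ is finite-dimensional and then killing it using Cohen--Macaulay/depth properties of both sides). Relatedly, your claim that ``inner faithfulness forces every $V_i$ to appear'' in the decomposition $C\cong\bigoplus_i M_i\otimes V_i^*$ is not immediate: inner faithfulness only prevents the action from factoring through a proper Hopf quotient, which a priori guarantees each simple occurs in some tensor power of $C$, not in $C$ itself; establishing $M_i\neq 0$ for every $i$ is a separate step. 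The second gap you name yourself: the representation-generator property needed for \textnormal{(a)}$\leftrightarrow$\textnormal{(e)}, i.e.\ that every indecomposable MCM $C^H$-module is, up to shift, a summand of $C$. Saying that this ``is the noncommutative analog of Auslander's classical theorem'' and ``relies essentially'' on the hypotheses identifies the obstacle but does not overcome it; one still has to prove that $\Hom_{C^H}(C,-)$ carries MCM modules to projective initial $\End_{C^H}(C)$-modules and that the unit of the adjunction is an isomorphism on MCMs (which uses reflexivity of MCM modules over the AS-Gorenstein ring $C^H$ and the isolated-singularity/dimension-2 setting). With both of these supplied the chain of bijections closes up as you describe, but as written the proposal records the architecture of the proof rather than the proof.
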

In \cite{CCKM} a noncommutative version of a matrix factorization was
defined for hypersurfaces of the form $S = R/(f)$, where $R$ is not
necessarily commutative, but $f$ is a normal element of $R$. The
graded automorphism of $R$ (the ``twist") induced by this normal
element $f$ is denoted by $\sigma$, and $\sigma$ can be used to
produce a graded left $R$-module $^\sigma M$. Specifically, define
$^{\sigma}M$ to be the graded left $R$-module with $^{\sigma}M=M$ as
graded $\k$-vector spaces, where $R$ acts via the rule $r\cdot m =
\sigma(r)m$. Further if $f$ has degree $d$, we can shift the degrees
in $^\sigma M$ and define $^{\tw}M := {}^{\sigma}M[-d]$ (the twisted
module) from a graded $R$-module $M$.  A {\em (left) twisted matrix
  factorization} (Definition \ref{def:TMF}) is given by a pair of maps
$(\varphi,\psi)$, where for finite rank free graded $R$-modules $F$
and $G$ there are graded left $R$-module homomorphisms $\varphi: F
\rightarrow G$ and $\psi: {^{\tw}G}\to F$ with $\varphi\psi =
\lambda_f^G$ and $\psi{^{\tw}\varphi}=\lambda_f^F$, where
$\lambda_f^F$ (resp. $\lambda_f^{G}$) is the map from $F$ (resp. $
       {^{\tw}G}$) given by left multiplication by $f$ (see Notation
       \ref{not:tw}).  
       factorizations are reviewed in Section \ref{sec:prelim}.  In
       particular, when $A$ is AS-regular and $B = A/(f)$, twisted
       matrix factorizations are related to maximal Cohen-Macaulay
       $B$-modules by the following generalization of a theorem of
       Eisenbud \cite{Eisenbud}.
 
 \medskip

\noindent \textbf{Standing notation for the rest of the Introduction.} Let $A$ be a left noetherian AS-regular algebra, let $f\in A_d$ be a normal and regular element of positive degree $d$, and let $B=A/(f)$. Take $\sigma$ to be the graded  automorphism of $A$ induced by the normality of $f$ in $A$.  We note that the assumption that $\sigma$ has finite order, needed in some parts of \cite{CCKM} is not needed in the following theorem.

\begin{thm} \cite[Lemma 5.3, Theorem 4.2(3,4)]{CCKM}  \label{TMFequiv} Retain the notation above.
The cokernel of $\varphi$ of a twisted matrix factorization $(\varphi, \psi)$ is a maximal Cohen-Macaulay $B$-module. Conversely, every maximal Cohen-Macaulay $B$-module with no free direct summand can be represented as  the $\coker\varphi$ for some reduced twisted matrix factorization $(\varphi,\psi)$. 
\end{thm}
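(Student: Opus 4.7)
The plan is to transcribe Eisenbud's classical correspondence \cite{Eisenbud} between matrix factorizations and MCM modules into the twisted setting, inserting the automorphism $\sigma$ wherever left multiplication by $f$ would fail to be $A$-linear.

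For the forward direction, set $M=\coker\varphi$. The identity $\varphi\psi=\lambda_f^G$ shows $fG\subseteq\im\varphi$, so $fM=0$ and $M$ is a $B$-module. To show $\varphi$ is injective, take $y\in F$ with $\varphi(y)=0$; viewing $y$ as an element of ${}^{\tw}F$ and using $\psi\cdot{}^{\tw}\varphi=\lambda_f^F$ gives $fy=\psi({}^{\tw}\varphi(y))=\psi(0)=0$, hence $y=0$ by regularity of $f$. Thus $0\to F\xrightarrow{\varphi} G\to M\to 0$ is a length-one graded free $A$-resolution of $M$. Reducing modulo $f$ produces $\bar\varphi\colon \bar F\to \bar G$ and $\bar\psi\colon {}^{\tw}\bar G\to \bar F$ satisfying $\bar\varphi\bar\psi=0$ and $\bar\psi\cdot{}^{\tw}\bar\varphi=0$; a short diagram chase using the injectivity of both $\varphi$ and $\psi$ (which follows symmetrically from $\varphi\psi=\lambda_f^G$) together with the regularity of $f$ yields $\ker\bar\varphi=\im\bar\psi$ and its symmetric counterpart $\ker\bar\psi=\im\,{}^{\tw}\bar\varphi$. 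Splicing produces a graded free resolution of $M$ over $B$ that is $2$-periodic up to $\tw$-twists, and this periodicity renders $\Ext^{\gg 0}_B(M,B)$ explicitly computable in terms of the $\bar\varphi,\bar\psi$ matrices, which is enough to verify the Ext/depth condition of Definition~\ref{def:MCM}.

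For the converse, let $M$ be an MCM left $B$-module with no free direct summand. Regarding $M$ as a graded $A$-module, a graded Auslander--Buchsbaum argument over the AS-regular algebra $A$ (using that $f$ is regular, $fM=0$, and the MCM hypothesis over $B$ forces $\mathrm{depth}_A M = \mathrm{gldim}\,A - 1$) shows that $M$ has projective dimension one over $A$, so a minimal graded free resolution takes the form
\[
0\to F\xrightarrow{\varphi} G\to M\to 0.
\]
Because $fM=0$, left multiplication by $f$ sends $G$ into $\im\varphi$; this map is not $A$-linear but becomes $A$-linear once its source is re-indexed as ${}^{\tw}G$, and the injectivity of $\varphi$ then produces a unique $A$-linear $\psi\colon {}^{\tw}G\to F$ with $\varphi\psi=\lambda_f^G$. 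The second identity $\psi\cdot{}^{\tw}\varphi=\lambda_f^F$ follows by precomposing with the injective $\varphi$ and unpacking the commutation relation $fa=\sigma(a)f$ on the free module~$F$. Finally, minimality of the resolution ensures that the entries of $\varphi$ lie in the augmentation ideal, so no trivial factor splits off and the resulting twisted matrix factorization is reduced.

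The subtle piece is the bookkeeping of the twist $\sigma$. In the commutative case $f$ commutes with everything silently, whereas here each time $f$ is moved past a scalar in $A$ the automorphism $\sigma$ intervenes, which is precisely why the source of $\psi$ and the source of ${}^{\tw}\varphi$ must carry $\tw$-twists. Confirming that the lift $\psi$ constructed in the converse direction is genuinely $A$-linear (rather than $\sigma$-semilinear) and that the two matrix-factorization identities hold on the nose (rather than up to corrections involving powers of $\sigma$) is where the essential noncommutative content lies; once this is handled, the remaining steps proceed exactly in parallel with Eisenbud's original argument.
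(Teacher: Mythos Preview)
The paper does not prove this statement; it is quoted from \cite{CCKM} without argument. The nearest the paper comes is Proposition~\ref{prop:MCMcheck} (also cited from \cite{CCKM}), which records the key bridge $\mathrm{pd}_A M=1 \Longleftrightarrow M\in MCM(B)$.

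Your outline is the standard Eisenbud argument with the $\sigma$-bookkeeping handled correctly, and it matches in spirit what \cite{CCKM} does. One step should be tightened. In the forward direction you build the $2$-periodic $B$-free resolution and then assert that periodicity ``renders $\Ext^{\gg0}_B(M,B)$ explicitly computable\ldots enough to verify'' the MCM condition. Computability is not vanishing: you still owe an argument that the $\Hom_B(-,B)$-dual of the periodic complex is exact in positive degrees (equivalently, that the transposed pair is again a twisted matrix factorization on the other side). The cleaner route---and the one the paper sets up---is to bypass Ext over $B$ entirely and invoke Proposition~\ref{prop:MCMcheck}: you have already exhibited the length-one graded free $A$-resolution $0\to F\xrightarrow{\varphi} G\to M\to 0$, so $\mathrm{pd}_A M\le 1$ and hence $M$ is MCM over $B$.

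The same proposition cleans up your converse. You write that ``the MCM hypothesis over $B$ forces $\mathrm{depth}_A M=\mathrm{gldim}\,A-1$'', but the MCM condition in Definition~\ref{def:MCM} is an $\Ext_B(-,B)$-vanishing statement, not a depth statement, and the passage from one to the other (via AS-Gorenstein duality for $B$ and a change-of-rings comparison of depths) is precisely the nontrivial content packaged in Proposition~\ref{prop:MCMcheck}. Once $\mathrm{pd}_A M=1$ is in hand, your lifting of $\lambda_f^G$ through the injective $\varphi$ to produce $\psi$, the verification of $\psi\,{}^{\tw}\varphi=\lambda_f^F$, and the use of minimality to conclude reducedness are all correct.
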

When $S=R/(f)$ is the hypersurface associated to a (commutative) Kleinian singularity, producing an explicit matrix factorization of the singularity  $f$ is facilitated by use of the {\em double branched cover} $S^\#:= R[z]/(f + z^2)$ of $S$.
Kn\"{o}rrer \cite{knor} showed how to relate MCM modules over $S^\#$ to those over $S$ by proving a relation between the matrix factorizations of $f$ over $R$ and those of $f + z^2$ over $R[z]$  (see also \cite[Chapter 8]{LW}). 
We achieve a similar result in the noncommutative setting by employing the category  of {\it twisted matrix factorizations} $TMF_R(f)$ of $f$ in $R$ (Definition \ref{def:TMF}), where $R$ is not necessarily commutative. Now our first main result is given as follows.

\begin{thm}[Theorem~\ref{KS}] 
A Krull-Schmidt  Theorem holds for elements of the category $TMF_A(f)$ that are not irrelevant (as in Definition ~\ref{def:twistedfactor}(2)).
\end{thm}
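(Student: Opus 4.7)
\medskip

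\noindent\textbf{Proof plan.} My approach is to transport the classical graded Krull-Schmidt theorem on $B$-modules to $TMF_A(f)$ via the cokernel functor from Theorem~\ref{TMFequiv}. The underlying fact is that $B=A/(f)$ is a connected graded noetherian $\k$-algebra, since $A$ is such and $f$ is a homogeneous normal regular element of positive degree; hence the category of finitely generated graded $B$-modules is Krull-Schmidt. Indeed, the degree-zero part of the graded endomorphism ring of any finitely generated graded $B$-module is a finite-dimensional $\k$-algebra, and for indecomposable modules it is local, so Azumaya's theorem applies.

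First, I would check that $TMF_A(f)$ is an additive category and that the assignment $(\varphi,\psi)\mapsto \coker \varphi$ extends to an additive functor from $TMF_A(f)$ to the category of finitely generated graded $B$-modules that commutes with finite direct sums. This is immediate, as differentials of direct sums are block diagonal. Combined with Theorem~\ref{TMFequiv}, this functor sends the non-irrelevant (equivalently, reduced) twisted matrix factorizations essentially surjectively onto the non-free graded MCM $B$-modules. Next I would prove that a non-irrelevant $(\varphi,\psi)\in TMF_A(f)$ is indecomposable if and only if $\coker\varphi$ is indecomposable as a graded $B$-module; the nontrivial direction uses the reducedness of $(\varphi,\psi)$ to lift an idempotent endomorphism of $\coker\varphi$ to one of $(\varphi,\psi)$.

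Given two decompositions of a non-irrelevant $(\varphi,\psi)$ into indecomposable non-irrelevant summands, applying $\coker$ then yields two decompositions of the associated non-free MCM $B$-module into indecomposable non-free summands; by the graded Krull-Schmidt theorem these match up to reordering and graded $B$-module isomorphism. I would then lift each such matching isomorphism $\coker\varphi_i\cong \coker\varphi'_{\pi(i)}$ to an isomorphism in $TMF_A(f)$, which gives Krull-Schmidt for non-irrelevant objects.

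The main obstacle is this last lifting and rigidity step, which is the noncommutative analog of Eisenbud's observation that a morphism of reduced matrix factorizations inducing an isomorphism on cokernels is already an isomorphism of matrix factorizations. I anticipate handling it in two moves: first, construct a chain map lift of a given module isomorphism using the projectivity of the free graded modules $F$ and $G$ appearing in the factorizations; then invoke a graded Nakayama's lemma argument, using that reducedness forces the entries of $\varphi$ and $\psi$ to lie in the graded maximal ideal of $A$, so that a lift of an isomorphism of cokernels is necessarily an isomorphism of the free modules themselves and hence an isomorphism in $TMF_A(f)$.
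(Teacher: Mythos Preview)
Your overall strategy---transport Krull--Schmidt from $B{\rm grmod}$ via the cokernel functor, and lift isomorphisms of cokernels back to isomorphisms of factorizations by a graded Nakayama argument---is precisely the paper's approach; the lifting step you describe is the content of Proposition~\ref{reducedIsom}, and the identification of endomorphism rings needed to conclude locality is Lemma~\ref{nilpotentDecomp}(1).

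However, there is a genuine gap. You write ``non-irrelevant (equivalently, reduced),'' and this equivalence is false. A trivial factorization $(1_F,\lambda_f^F)$ or $(\lambda_f^F,1_{^{\tw}F})$ with $F\neq 0$ is non-irrelevant but \emph{not} reduced, and its cokernel is either $0$ or a free $B$-module. Consequently your claim that applying $\coker$ to a decomposition into non-irrelevant indecomposables yields a decomposition of a \emph{non-free} MCM $B$-module into \emph{non-free} indecomposables fails exactly on these summands, and your Nakayama lifting (which needs the entries of $\varphi$ to lie in $A_+$, i.e.\ reducedness) breaks down there as well. Concretely, $(1_A,\lambda_f^A)$ is an indecomposable non-irrelevant object of $TMF_A(f)$ whose cokernel is zero, so your argument says nothing about it.

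The paper closes this gap by first invoking Proposition~\ref{TMFdecomp} to split an arbitrary non-irrelevant $(\varphi,\psi)$ as a reduced factorization direct sum a finite sum of trivials, and then handling the trivial part separately via Proposition~\ref{KScats}, which shows $TMF_A^0(f)\simeq {\rm proj}(A)$ and hence that $TMF_A^t(f)$ is already Krull--Schmidt. Only after this reduction does the cokernel argument you outline apply cleanly. Add this preliminary step and your proof goes through.
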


We proceed next  to define a {\it double branched cover} in  a noncommutative setting (cf. \cite[Section~8.2]{LW}). 
Noncommutativity introduces a number of obstructions to this process, and our results require several technical assumptions  on the automorphism $\sigma$ induced by the normal element $f$.  Our hypotheses include that $f$ has even degree 
and has a square root automorphism $\sqrt{\sigma}$  with $\sqrt{\sigma}(f) = f$ (see Hypotheses \ref{hyp:db}).
The {\it double branched cover} $$B^\#:= A[z; \sqrt{\sigma}]/(f + z^2)$$ of $B$ is  then defined.  We also define a graded automorphism $\z$ of $A[z; \sqrt{\sigma}]$ that extends to $B^\#$ by mapping $z$ to $-z$, and form the skew group-ring $B^\#[\z]$.

\begin{thm}[Theorem~\ref{thm:mcm-tmf}] 
Retain the notation and hypotheses above. Then, we obtain that MCM$_{\z}(B^{\#})$, the category of graded $B^\#[\z]$-modules that are graded MCM $B^\#$-modules, is equivalent to the category of twisted matrix factorizations TMF$_A(f)$ of $f$ in $A$. 
\end{thm}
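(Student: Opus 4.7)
The plan is to construct the equivalence explicitly by building a functor $\Phi\colon \mathrm{TMF}_A(f)\to\mathrm{MCM}_\zeta(B^\#)$ together with a quasi-inverse $\Psi$, using Theorem~\ref{TMFequiv} applied to the Ore extension $A[z;\sqrt\sigma]$ as the bridge. The observation is that $A[z;\sqrt\sigma]$ is again left noetherian AS-regular (one dimension higher) and that $f+z^2$ is a normal regular element of even degree $d$, so that $B^\#=A[z;\sqrt\sigma]/(f+z^2)$ is itself a noncommutative hypersurface to which the twisted matrix factorization machinery applies.

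First I would define $\Phi$ on objects. Given $(\varphi\colon F\to G,\ \psi\colon {}^{\tw}G\to F)\in \mathrm{TMF}_A(f)$, form the block maps
$$
\tilde\varphi=\begin{pmatrix}\varphi\otimes 1 & -z\cdot I_G\\ z\cdot I_F & \psi\otimes 1\end{pmatrix},\qquad
\tilde\psi=\begin{pmatrix}\psi\otimes 1 & z\cdot I_G\\ -z\cdot I_F & \varphi\otimes 1\end{pmatrix}
$$
on the appropriate graded free $A[z;\sqrt\sigma]$-modules. Using the Ore relation $za=\sqrt\sigma(a)z$, the hypothesis $\sqrt\sigma(f)=f$, and the factorization identities $\varphi\psi=f\cdot I_G$, $\psi\cdot{}^{\tw}\varphi=f\cdot I_F$, the cross-terms cancel and the diagonals yield $(f+z^2)\cdot I$. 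Hence $(\tilde\varphi,\tilde\psi)$ is a twisted matrix factorization of $f+z^2$ over $A[z;\sqrt\sigma]$, and Theorem~\ref{TMFequiv} produces $M=\coker\tilde\varphi$ as a graded MCM $B^\#$-module. A $\zeta$-equivariant structure on $M$ is defined by sending the class of $(u,v)$ to the class of $(u,-v)$; compatibility with $\zeta(z)=-z$ and with $B^\#$-multiplication follows directly from the sign pattern of the $\pm z$ entries in $\tilde\varphi$. On morphisms, $\Phi$ extends a morphism of factorizations block-diagonally.

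For the quasi-inverse $\Psi$, start with $N\in\mathrm{MCM}_\zeta(B^\#)$ and split $N=N_+\oplus N_-$ into $\pm 1$-eigenspaces for $\zeta$; this splitting is clean because $\mathrm{char}\,\kk=0$ and $\zeta^2=\mathrm{id}$. Left multiplication by $z$ interchanges $N_+$ and $N_-$ (since $\zeta(z)=-z$) and, after a degree shift accounting for $\deg z=d/2$, yields a pair of $A$-module maps $\alpha\colon N_+\to N_-$ and $\beta\colon N_-\to N_+$ with suitable twisted source and target. The relation $z^2=-f$ in $B^\#$ together with $za=\sqrt\sigma(a)z$ translates exactly into the twisted factorization equations $\alpha\beta=-f\cdot I$ and $\beta\cdot{}^{\tw}\alpha=-f\cdot I$ (absorbing the sign into the definitions). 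A minimal free presentation of $N_+$ then promotes $(\alpha,\beta)$ to a twisted matrix factorization of $f$ over $A$, which is the value of $\Psi(N)$; morphisms are handled by restriction to eigenspaces.

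Finally I would verify that $\Phi\circ\Psi$ and $\Psi\circ\Phi$ are naturally isomorphic to the identity. Starting from a factorization $(\varphi,\psi)$, applying $\Phi$ and then restricting to $\zeta$-eigenspaces recovers the two summands $F$ and $G$ of the block presentation, and the $z$-action recovers $\varphi$ and $\psi$. Conversely, starting from an equivariant MCM $N$ and rebuilding the block cokernel from its eigenspace decomposition reproduces $N$ as a $B^\#[\zeta]$-module up to canonical isomorphism.

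The main obstacle will be the bookkeeping for degrees, twists, and signs: one must check that the block maps $\tilde\varphi$ and $\tilde\psi$ are genuine graded $A[z;\sqrt\sigma]$-module homomorphisms with the correct twist (governed by the automorphism of $A[z;\sqrt\sigma]$ induced by normality of $f+z^2$), that the hypothesis $\sqrt\sigma\circ\sqrt\sigma=\sigma$ and $\sqrt\sigma(f)=f$ make the various twist functors ${}^{\tw}(-)$ on $A$ and on $A[z;\sqrt\sigma]$ compatible, and that the involution $\zeta^2=\mathrm{id}$ is preserved throughout. Once these verifications are in place the equivalence follows formally from Theorem~\ref{TMFequiv} and the eigenspace decomposition.
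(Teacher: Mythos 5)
Your overall strategy matches the paper's: the quasi-inverse $\Psi$ is exactly the paper's functor $\mathscr A$ (eigenspace decomposition plus multiplication by $\pm z$), and your forward functor $\Phi$ is the paper's $\mathscr B$ routed through the intermediate functor $\mathscr C$ into $TMF_{A[z;\sqrt\sigma]}(f+z^2)$ followed by $\textsc{coker}$; the paper proves these two descriptions agree (its Proposition~\ref{commDiag1}), so that detour is legitimate, though it adds the nontrivial task of identifying $\coker\tilde\varphi$ with $F\oplus{}^{\t}G$ as a $B^{\#}$-module, which you gloss over in the final verification.

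There is, however, one genuine gap in your construction of $\Psi$. For $(\alpha,\beta)$ to be a twisted matrix factorization of $f$ over $A$, the modules $N_+$ and $N_-$ must themselves be finite-rank graded \emph{free} $A$-modules; a twisted matrix factorization is by definition a pair of maps between free modules satisfying $\alpha\beta=\lambda_f$ and $\beta\,{}^{\tw}\alpha=\lambda_f$ on the nose. Your proposed fix --- ``a minimal free presentation of $N_+$ then promotes $(\alpha,\beta)$ to a twisted matrix factorization'' --- does not work: lifting $\alpha$ and $\beta$ to free presentations only gives maps whose composites agree with $\lambda_f$ modulo the relation submodules, not equal to it, so you do not obtain a factorization unless $N_\pm$ were free to begin with. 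The missing ingredient is precisely the paper's Lemma~\ref{MCMtransfer}: a graded $B^{\#}$-module is graded MCM if and only if it is graded free over $A$ (proved via a change-of-rings spectral sequence for the free extension $A\subset B^{\#}=A\oplus Az$ together with the graded Auslander--Buchsbaum formula, using that every MCM module over the AS-regular algebra $A$ is free). Since $\z|_A=\mathrm{id}$, the eigenspaces $N_\pm$ are $A$-submodules and direct summands of $N$, hence graded free of finite rank, and only then does your $(\alpha,\beta)$ literally lie in $TMF_A(f)$. Without this lemma the functor $\Psi$ is not well defined, so you should supply it (or an equivalent depth argument) before the rest of your verification can go through.
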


We also relate analogs of {\it reduced} twisted matrix factorizations  (Definition~\ref{def:twistedfactor}(5)) to those that are {\it symmetric}  (Definition \ref{def:sym}).

\begin{thm}[Theorem \ref{imageOfC}] There exists a functor $\mathscr{C}$ from TMF$_A(f)$ to \linebreak TMF$_{A[z;\sqrt{\sigma}]}(f+z^2)$ so that the reduced twisted matrix factorizations in the image of $\mathscr{C}$ are precisely those that are symmetric.
\end{thm}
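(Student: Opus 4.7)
The plan is to construct $\mathscr{C}$ by a noncommutative Knörrer-type block construction and then identify its image.

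Set $\widetilde{A}:=A[z;\sqrt{\sigma}]$, so $f+z^2$ is normal in $\widetilde{A}$ with associated automorphism $\sigma'$ that restricts to $\sigma$ on $A$ and fixes $z$; here both the assumption $\sqrt{\sigma}(f)=f$ and the evenness of $\deg f$ are essential. Given an object $(\varphi,\psi)$ of $TMF_A(f)$, with $\varphi\colon F\to G$ and $\psi\colon {}^{\tw}G\to F$, I would extend scalars along $A\hookrightarrow \widetilde{A}$ to produce $\widetilde{F}:=\widetilde{A}\otimes_A F$ and $\widetilde{G}:=\widetilde{A}\otimes_A G$ with lifted maps $\widetilde{\varphi}$, $\widetilde{\psi}$, and then define $\mathscr{C}(\varphi,\psi):=(\Phi,\Psi)$ by a block formula of Knörrer type, schematically
$$\Phi = \begin{pmatrix} \widetilde{\varphi} & \lambda_z \\ -\lambda_z & \widetilde{\psi} \end{pmatrix}, \qquad \Psi = \begin{pmatrix} \widetilde{\psi} & -\lambda_z \\ \lambda_z & \widetilde{\varphi} \end{pmatrix},$$
acting between appropriate direct sums of shifts of $\widetilde{F}$ and $\widetilde{G}$, with shifts chosen so every entry is homogeneous of degree $d$. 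On morphisms, $\mathscr{C}$ acts by block-diagonal extension.

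Next I would verify that $(\Phi,\Psi)$ is an object of $TMF_{\widetilde{A}}(f+z^2)$. Block multiplication reduces $\Phi\Psi=\lambda_{f+z^2}$ to the diagonal identities $\widetilde{\varphi}\widetilde{\psi}+\lambda_{z^2}=\lambda_{f+z^2}$ and $\widetilde{\psi}\,{}^{\tw}\widetilde{\varphi}+\lambda_{z^2}=\lambda_{f+z^2}$, which follow directly from the TMF relations for $(\varphi,\psi)$, plus the vanishing of the off-diagonal blocks. The off-diagonal vanishing is exactly where the hypothesis $\sqrt{\sigma}(f)=f$ enters: it guarantees that $\lambda_z$ intertwines $\widetilde{\varphi}$ and $\widetilde{\psi}$ with the twist $\tw$ induced by $\sigma'$ in the required compatible way. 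The identity $\Psi\,{}^{\tw}\Phi=\lambda_{f+z^2}$ is verified by a parallel computation, and functoriality on morphisms is immediate.

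For the image, by inspection each $\mathscr{C}(\varphi,\psi)$ has the block-anti-diagonal $\pm\lambda_z$ pattern that realises Definition~\ref{def:sym}, and reducedness is preserved since the Knörrer construction introduces no invertible summands. Conversely, given a reduced symmetric $(\Phi,\Psi)\in TMF_{\widetilde{A}}(f+z^2)$, the symmetry datum provides a decomposition of the domain and codomain into two summands and identifies the off-diagonal blocks of $\Phi$ and $\Psi$ as $\pm\lambda_z$ after a change of basis. The diagonal blocks then descend along $A\hookrightarrow \widetilde{A}$ to graded $A$-linear maps $\varphi\colon F\to G$ and $\psi\colon {}^{\tw}G\to F$ between underlying $A$-summands, and the TMF relations for $f$ follow by restriction of the TMF relations for $f+z^2$. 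The main obstacle will be this descent step, together with tracking twists and shifts throughout the verification: the bookkeeping of $\tw$ on $A$-modules versus $\widetilde{A}$-modules, the compatibility $\sigma'|_A=\sigma$, and the degree shifts required to make the block formula homogeneous are the principal technical hurdles.
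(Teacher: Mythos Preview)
Your construction of $\mathscr{C}$ and the forward direction (that everything in the image is symmetric) are essentially what the paper does, up to cosmetic differences in block conventions. The gap is in your converse.

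You assert that symmetry ``provides a decomposition of the domain and codomain into two summands and identifies the off-diagonal blocks of $\Phi$ and $\Psi$ as $\pm\lambda_z$ after a change of basis,'' and that ``the diagonal blocks then descend along $A\hookrightarrow \widetilde{A}$.'' Neither step is justified. The symmetry datum is only an isomorphism $(\alpha,\beta)\colon(\Phi,\Psi)\to T(\Phi,\Psi)=({}^{\tau^{-1}}\Psi,{}^{\tau}\Phi)$; it does not by itself split the free modules or force the off-diagonal blocks to be $\pm\lambda_z$. And even if you produced some $2\times 2$ block form, there is no reason the diagonal blocks should be free of $z$: entries of a matrix over $\widetilde{A}=A[z;\sqrt{\sigma}]$ are polynomials in $z$, and a generic change of basis will not eliminate $z$ from the diagonal. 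This descent is precisely the hard part.

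The paper takes a different route that avoids this obstacle entirely. Given reduced symmetric $(\Phi,\Psi)$, it passes to the cokernel $N=\coker\Phi$, replaces $(\Phi,\Psi)$ by the explicit factorization $(\Delta,\Sigma)$ of Lemma~\ref{factoringN}, and then uses the symmetry isomorphism to produce a $B^{\#}$-module map $\delta\colon N\to{}^{\zeta}N$. One must then correct $\delta$ so that $\delta^2=\mathrm{id}_N$; this requires reducing to the indecomposable case and invoking the local-endomorphism-ring argument (the $(1+\rho)^{-1/2}$ trick of Lemma~\ref{nilpotentDecomp}) to kill the nilpotent defect in $\delta'\delta$. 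Once $\delta$ is $\zeta$-compatible, $(N,\delta)\in MCM_{\zeta}(B^{\#})$, and the previously established equivalence $\mathscr{A}$ together with Proposition~\ref{commDiag1} gives $(\Phi,\Psi)\cong\mathscr{C}\mathscr{A}(N,\delta)$. In other words, the ``descent'' you want is obtained not by manipulating matrices but by producing a $\zeta$-action on the cokernel and splitting $N=N^{+}\oplus N^{-}$ as $A$-modules; this is what forces the $z$-action into the off-diagonal position. Your outline is missing both the passage to the module level and the nilpotent correction needed to make the $\zeta$-action genuinely involutive.
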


Our next task is to describe the indecomposable MCM $B$-modules, which via Theorem~\ref{TMFequiv}, can be done using twisted matrix factorizations. We first exploit the correspondence between twisted matrix factorizations of $f$ and of $f +z^2$ to decompose factorizations in {Lemma \ref{upDown}}, and then use this result to prove the following theorem.
{
\begin{thm}[Theorem \ref{fcmtPreserved}]
The algebra $B$ has finite Cohen-Macaulay type if and only if $B^\#$ has finite
Cohen-Macaulay type.
\end{thm}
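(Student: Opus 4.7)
The plan is to thread the claim through the correspondences in the excerpt, reducing it to the standard comparison of indecomposables between graded modules over a ring and those over its skew group ring by a finite group.

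First, by Theorem~\ref{TMFequiv} combined with the Krull--Schmidt Theorem~\ref{KS}, $B$ has finite Cohen--Macaulay type if and only if TMF$_A(f)$ has only finitely many isomorphism classes of non-irrelevant indecomposable objects (the single remaining MCM class is the free one). Applying the same pair of results with $A[z;\sqrt{\sigma}]$ and $f+z^2$ in place of $A$ and $f$, the analogous dichotomy holds for $B^\# = A[z;\sqrt{\sigma}]/(f+z^2)$. By Theorem~\ref{thm:mcm-tmf}, TMF$_A(f)$ is moreover equivalent to MCM$_\zeta(B^\#)$, so the theorem reduces to the statement that MCM$_\zeta(B^\#)$ has finitely many indecomposables if and only if MCM$(B^\#)$ does.

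To prove this last equivalence I would use the induction/restriction adjunction between graded $B^\#$-modules and graded $B^\#[\zeta]$-modules, given by $\mathrm{Ind} = B^\#[\zeta] \otimes_{B^\#} (-)$ and $\mathrm{Res}$ (forgetting the $\zeta$-equivariance). Both functors preserve direct sums and the MCM property, and since $|\langle\zeta\rangle| = 2$ is invertible in $\k$, a standard averaging argument shows that every $M \in \mathrm{MCM}_\zeta(B^\#)$ is a direct summand of $\mathrm{Ind}(\mathrm{Res}(M))$ and every $N \in \mathrm{MCM}(B^\#)$ is a direct summand of $\mathrm{Res}(\mathrm{Ind}(N))$.

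Combining these summand-containments with Krull--Schmidt for graded noetherian MCM modules on each side (valid because graded endomorphism rings of indecomposables have local degree-zero part), a finite list of indecomposables on one side produces, via the appropriate functor, a finite list of modules whose indecomposable summands exhaust all indecomposables on the other side. Running this argument in both directions completes the reduction, and hence the theorem. The main obstacle will be checking in the graded noncommutative setting that induction and restriction along the $\zeta$-action preserve graded MCM-ness and that the averaging retractions are well-defined (given that $\sqrt{\sigma}$ may have infinite order); once these are in place, the argument is formal. Lemma~\ref{upDown} supplies precisely the decomposition of factorizations needed to make this concrete on the level of twisted matrix factorizations.
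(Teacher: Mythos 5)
Your argument is correct, but it takes a genuinely different route from the paper's. The paper stays entirely at the level of twisted matrix factorizations: it takes a complete list of indecomposables in $TMF_A(f)$, decomposes each $\mathscr C(\varphi_i,\psi_i)$, and then uses Lemma~\ref{upDown}(2) --- which exhibits any reduced $(\Phi,\Psi)$ as a direct summand of $\mathscr C\,{\rm Res}({}^{\t^{-1}}(\Phi,\Psi))$ --- together with the Krull--Schmidt Theorem~\ref{KS} to conclude that every indecomposable factorization of $f+z^2$ already occurs among the listed summands (and symmetrically for the converse). You instead pass through Theorem~\ref{thm:mcm-tmf} to replace $TMF_A(f)$ by $MCM_{\z}(B^{\#})$ and then invoke the classical induction/restriction comparison for the skew group ring $B^{\#}[\z]$ over $B^{\#}$, using that $|\la\z\ra|=2$ is invertible so that $M$ splits off $\mathrm{Ind}\,\mathrm{Res}(M)$ and $N$ splits off $\mathrm{Res}\,\mathrm{Ind}(N)=N\oplus{}^{\z}N$. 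The two are closely related --- the paper's functor ${\rm Res}$ on factorizations, transported through $\mathscr B$ and Lemma~\ref{resSymm}, is essentially your induction functor in disguise, and Lemma~\ref{upDown} is the factorization-level shadow of the separability splitting --- but your version is more standard and more transparent: it isolates the real mechanism (relative separability of an order-$2$ skew group ring) and would generalize immediately to any finite group of automorphisms with invertible order, whereas the paper's proof buys explicit matrix-level control of how indecomposables decompose under $\mathscr C$ and ${\rm Res}$, which it needs later for Proposition~\ref{decomposing} and Theorem~\ref{KP}. The only points you flag as "to be checked" are indeed routine: $\mathrm{Ind}(N)^{\circ}\cong N\oplus{}^{\z}N$ is MCM because twisting by a graded automorphism preserves the MCM condition, the averaging retraction $m\mapsto\tfrac12\sum_g g^{-1}\otimes gm$ is a graded $B^{\#}[\z]$-map since $\z$ has degree $0$, and the order of $\sqrt{\s}$ is irrelevant because only $\z$ (of order $2$) enters the averaging. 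Krull--Schmidt on the equivariant side follows either from the equivalence with $TMF_A(f)$ or directly from $B^{\#}[\z]$ being graded noetherian and locally finite.
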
}

Then we use two applications of the double branched cover construction to form the {\it second double branched cover} $(B^{\#})^{\#}$ (Definition-Notation \ref{def:ddbc}), along with a change of variable (see Remark \ref{rmk:chgvar}) to relate twisted matrix factorizations of $f$ and twisted matrix factorizations of $f + uv$. With this, we achieve our noncommutative version of  Kn\"{o}rrer's Periodicity Theorem below.

\begin{thm} [Theorem \ref{KP}, Corollary~\ref{corKP}]
There exists a bijection between the sets of isomorphism classes of nontrivial indecomposable graded matrix factorizations of $f$ and those of $f+uv$. Thus, there is also a bijection between the sets of isomorphism classes of indecomposable non-free MCM $B$-modules and indecomposable non-free MCM $(B^{\#})^{\#}$-modules.
\end{thm}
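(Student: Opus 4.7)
The plan is to establish the bijection first at the level of twisted matrix factorizations (TMFs) and then transfer it to MCM modules via Theorem~\ref{TMFequiv}. The central tool is the functor $\mathscr{C}$ from Theorem~\ref{imageOfC}, which I will iterate. Concretely, apply $\mathscr{C}$ once to obtain $\mathscr{C}_1 : \mathrm{TMF}_A(f) \to \mathrm{TMF}_{A[z;\sqrt{\sigma}]}(f+z^2)$, and then again --- replacing $A$ by $A[z;\sqrt{\sigma}]$ and $f$ by $f+z^2$ --- to obtain $\mathscr{C}_2 : \mathrm{TMF}_{A[z;\sqrt{\sigma}]}(f+z^2) \to \mathrm{TMF}_{A[z;\sqrt{\sigma}][w;\sqrt{\sigma'}]}(f+z^2+w^2)$, where $\sqrt{\sigma'}$ is the square root automorphism used to construct the second double branched cover. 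Via the change of variable $u = z+iw$, $v = z-iw$ described in Remark~\ref{rmk:chgvar}, the codomain is identified with $\mathrm{TMF}$ of $f+uv$ over the ambient ring of $(B^\#)^\#$. I set $\mathscr{K} := \mathscr{C}_2 \circ \mathscr{C}_1$ and will show $\mathscr{K}$ induces the desired bijection on isomorphism classes of nontrivial indecomposables.

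For \emph{injectivity} on iso classes (and preservation of indecomposability of non-irrelevant objects), Theorem~\ref{imageOfC} guarantees that $\mathscr{C}_1$ and $\mathscr{C}_2$ are each fully faithful onto their essential images (the reduced symmetric TMFs). Fullness plus the Krull-Schmidt Theorem~\ref{KS} applied in both TMF categories then forces $\mathscr{K}$ to send non-irrelevant indecomposables to non-irrelevant indecomposables, and distinct isomorphism classes to distinct isomorphism classes.

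The main obstacle is \emph{essential surjectivity on nontrivial indecomposables}. Let $(\Phi,\Psi)$ be an indecomposable, nontrivial TMF of $f+uv$, which under the change of variable is a TMF of $(f+z^2)+w^2$ over $A[z;\sqrt{\sigma}]$. By Lemma~\ref{upDown} (applied at this outer level), $(\Phi,\Psi)$ decomposes, modulo trivial/irrelevant summands, into factorizations lying in the image of $\mathscr{C}_2$, i.e.\ the $\zeta$-symmetric ones. By Theorem~\ref{KS} and the indecomposability and nontriviality of $(\Phi,\Psi)$, we must have $(\Phi,\Psi) \cong \mathscr{C}_2(\Phi',\Psi')$ for some reduced $(\Phi',\Psi') \in \mathrm{TMF}_{A[z;\sqrt{\sigma}]}(f+z^2)$, and full faithfulness of $\mathscr{C}_2$ forces $(\Phi',\Psi')$ to be indecomposable. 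Repeat the argument: apply Lemma~\ref{upDown} once more to $(\Phi',\Psi')$ viewed as a TMF of $f+z^2$ over $A$, obtaining $(\Phi',\Psi') \cong \mathscr{C}_1(\varphi,\psi)$ for some indecomposable nontrivial $(\varphi,\psi) \in \mathrm{TMF}_A(f)$. Hence $(\Phi,\Psi) \cong \mathscr{K}(\varphi,\psi)$, as required.

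Finally, translate from TMFs to MCMs via Theorem~\ref{TMFequiv}: the iso classes of indecomposable non-free MCM $B$-modules biject with iso classes of nontrivial indecomposables in $\mathrm{TMF}_A(f)$, and likewise for $(B^\#)^\#$. Combining this with the TMF-level bijection above yields the MCM statement of Corollary~\ref{corKP}. The \emph{principal technical difficulty} lies in iterating Lemma~\ref{upDown} cleanly in the presence of two nested twists $\sqrt{\sigma}$ and $\sqrt{\sigma'}$: one must verify that the change of variable in Remark~\ref{rmk:chgvar} is compatible with the $\zeta$-action defining symmetry in Theorem~\ref{imageOfC}, and that the reducedness/trivial-summand bookkeeping — handled abstractly by Krull-Schmidt at each step --- does not introduce extraneous or lose essential components when passing up and down both layers of the double branched cover.
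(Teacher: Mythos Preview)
Your approach has a genuine gap: the functor $\mathscr{C}$ is \emph{not} full, and hence $\mathscr{K}=\mathscr{C}_2\circ\mathscr{C}_1$ does \emph{not} preserve indecomposability. Theorem~\ref{imageOfC} only characterizes the essential image of $\mathscr{C}$ (the reduced symmetric factorizations); it says nothing about fullness. In fact Proposition~\ref{decomposing}(1) shows that if $(\varphi,\psi)$ is indecomposable and \emph{symmetric}, then $\mathscr{C}(\varphi,\psi)$ is decomposable, so $\mathscr{C}(\varphi,\psi)$ has a nontrivial idempotent endomorphism not in the image of $\mathscr{C}$. More decisively, the paper's lemma preceding Theorem~\ref{KP} gives $\mathscr{C}_2\circ\mathscr{C}_1\cong\mathscr{H}\oplus T\mathscr{H}$, so $\mathscr{K}(\varphi,\psi)$ is \emph{always} a direct sum of two nontrivial factorizations. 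Your claimed bijection via $\mathscr{K}$ therefore cannot work as stated.

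The surjectivity argument has the same defect. From Lemma~\ref{upDown}(2) you get that an indecomposable $(\Phi,\Psi)$ is a \emph{summand} of something in the image of $\mathscr{C}_2$, but you cannot conclude $(\Phi,\Psi)\cong\mathscr{C}_2(\Phi',\Psi')$: by Theorem~\ref{imageOfC} that would force $(\Phi,\Psi)$ to be symmetric, which need not hold. The paper repairs this by working with $\mathscr{H}$ rather than $\mathscr{K}$ and exploiting the symmetric/asymmetric dichotomy of Proposition~\ref{decomposing}: under $\mathscr{C}$, a symmetric indecomposable splits into two asymmetric indecomposables, while an asymmetric indecomposable stays indecomposable but becomes symmetric. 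Tracking this through both layers shows $\mathscr{C}_2\mathscr{C}_1(\varphi,\psi)$ always has exactly two indecomposable summands, matching the decomposition $\mathscr{H}\oplus T\mathscr{H}$ and forcing $\mathscr{H}(\varphi,\psi)$ itself to be indecomposable. Injectivity and surjectivity are then argued for $\mathscr{H}$, using that $T\mathscr{H}\cong\mathscr{H}T$ and $\mathscr{H}(\varphi,\psi)\ncong T\mathscr{H}(\varphi,\psi)$.
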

Finally in Section \ref{sec:examples} we present explicit matrix factorizations for the noncommutative Kleinian singularities of \cite{CKWZ} in {Theorem \ref{thm:factorizations}}.

\medskip

The paper is organized as follows.  Section \ref{sec:back} contains
general background material and Section \ref{sec:prelim} contains the
results on twisted matrix factorizations that are needed in the paper.
Section \ref{sec:double} describes the double branched cover in the
noncommutative setting. Our version of the Kn\"{o}rrer Periodicity
Theorem is established in Section \ref{sec:KP}. We illustrate some of
the results above in Section \ref{sec:examples} for noncommutative
Kleinian singularities; explicit matrix factorizations of the
singularities found in \cite{CKWZ} are presented.


\section{Background material} \label{sec:back}

We recall for the reader background material on graded algebras, graded modules, and twisting. We also discuss noncommutative graded analogs of results on modules over commutative local rings.

\smallbreak We begin with a brief discussion of categories of modules over graded algebras. Let $R$ be a graded $\k$-algebra and let $M$ be a finitely generated graded (left) $R$-module. We also assume that $R$ is locally finite, i.e. that each of its graded components is finite dimensional. 

\begin{notn}[$R{\rm Mod}$, $R{\rm GrMod}$, $R{\rm grmod}$, $\widetilde{\ }$\ ] \label{not:tilde}
Consider the following notation and terminology.
\begin{enumerate}
\item We denote the category of ungraded left $R$-modules by $R{\rm Mod}$.

\smallskip

\item 
Since $R$ is a graded algebra, we also consider the subcategory of $R{\rm Mod}$ consisting of $\mathbb{Z}$-graded, bounded below, locally finite left $R$-modules, namely \emph{graded left $R$-modules}, with degree 0 morphisms;
this is denoted $R{\rm GrMod}$. Morphisms in $R{\rm GrMod}$ will be called \emph{graded homomorphisms}.

\smallskip

\item The functor that forgets grading will be denoted $$\widetilde{}:R{\rm GrMod}\to R{\rm Mod}.$$

\item The subcategory of $R{\rm GrMod}$ consisting of finitely generated graded left $R$-modules will be denoted $R{\rm grmod}$.
\end{enumerate}
\end{notn}

We note that $R{\rm GrMod}$ is a $\k$-linear abelian category, and if $R$ is graded noetherian, $R{\rm grmod}$ is as well.

\smallbreak We also note that since $R$ is locally finite, finitely generated graded $R$-modules are also locally finite. It follows that $R{\rm grmod}$ is \emph{Hom-finite}, which is to say that $\Hom_{R\rm grmod}(M,N)$ is a finite-dimensional $\k$-vector space for all $M, N \in R\rm grmod$. If, in addition, $R$ is assumed to be graded noetherian, then the abelian category $R{\rm grmod}$ is a {\it Krull-Schmidt category}. That is, every object of $R{\rm grmod}$ decomposes into a finite direct sum of indecomposable objects, and the endomorphism ring of any indecomposable object is a local ring \cite[Lemma 5.2, Theorem 5.5]{Krause}. Moreover, the decomposition is unique up to isomorphism and permutation of factors \cite[Theorem 4.2]{Krause}. In particular, we have the following result on the endomorphism ring of an indecomposable module in $R{\rm grmod}$.

\begin{prop} \label{prop:local}
If $M$ is a finitely generated graded indecomposable $R$-module, then the degree 0 endomorphism ring $\End_R(M)$ is a local ring. \qed
\end{prop}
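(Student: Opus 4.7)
The plan is to deduce the proposition directly from the Krull-Schmidt property of $R{\rm grmod}$ already invoked in the preceding paragraph, together with the standard fact that a finite-dimensional $\k$-algebra with only trivial idempotents is local. Concretely, I would first observe that $\End_R(M)$ is a finite-dimensional $\k$-algebra: by Hom-finiteness of $R{\rm grmod}$ (which follows from $R$ being locally finite and $M$ finitely generated), the space of degree 0 endomorphisms of $M$ is finite-dimensional over $\k$, and composition makes it a $\k$-algebra.

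Next I would use indecomposability of $M$ to rule out nontrivial idempotents in $\End_R(M)$. Given an idempotent $e\in\End_R(M)$, the standard decomposition $M = \ker(e)\oplus \im(e)$ as left $R$-modules respects the grading because $e$ is a degree 0 morphism; each summand is a graded $R$-submodule of $M$. Since $M$ is indecomposable in $R{\rm grmod}$, one of these summands is zero, forcing $e=0$ or $e=1$.

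Finally, I would invoke the structure of finite-dimensional $\k$-algebras to conclude that $\End_R(M)$ is local. Let $J$ denote the Jacobson radical; it is nilpotent, and $\End_R(M)/J$ is semisimple artinian. Because $J$ is nilpotent, idempotents lift from $\End_R(M)/J$ to $\End_R(M)$, so the absence of nontrivial idempotents in $\End_R(M)$ forces the same in $\End_R(M)/J$. A semisimple artinian ring of the form $\prod_i M_{n_i}(D_i)$ with no nontrivial idempotents must consist of a single factor with $n_i=1$, hence is a division ring. Therefore $\End_R(M)$ has a unique maximal (two-sided, and in fact one-sided) ideal $J$ and is local.

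The argument is essentially a repackaging of \cite[Lemma 5.2, Theorem 5.5]{Krause} as already cited, so the only real content is verifying that the hypotheses of the Krull-Schmidt framework apply in the graded setting; no step presents a serious obstacle beyond being careful that the decomposition of $M$ induced by $e$ is taken in $R{\rm grmod}$ (not merely in $R{\rm Mod}$), which is automatic since $e$ has degree 0.
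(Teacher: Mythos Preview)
Your proposal is correct and aligns with the paper's treatment: the paper gives no proof beyond the \qed, relying on the immediately preceding citation of \cite[Lemma 5.2, Theorem 5.5]{Krause} establishing that $R{\rm grmod}$ is Krull-Schmidt, and your argument is precisely the standard unpacking of that fact (Hom-finiteness, no nontrivial idempotents, hence local endomorphism ring).
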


\smallbreak Now we discuss {\it shifts} within the category $R{\rm GrMod}$.  For $M\in R{\rm GrMod}$ and $n\in\mathbb{Z}$ we define $M[n]$ to be the graded left $R$-module with $M[n]_j=M_{n+j}$ for all $j\in\mathbb{Z}$. If $\alpha:M\to N$ is a graded homomorphism of graded left $R$-modules, we let $\alpha[n]$ denote the unique element of $\Hom_{R{\rm GrMod}}(M[n],N[n])$ such that  $\widetilde{\alpha[n]}=\widetilde\alpha$.

\smallbreak  Next, we turn our attention to {\it twists} within  $R{\rm GrMod}$.  Let $\s:R\to R$ be a degree 0 graded algebra automorphism of $R$. For $M\in R{\rm GrMod}$ we define $^{\s}M$ to be the graded left $R$-module with $^{\s}M=M$ as graded $\k$-vector spaces where $R$ acts via the rule $r\cdot m = \s(r)m$. If $\varphi:M\to N$ is a graded homomorphism of graded left $R$-modules, then $\varphi$ also defines a morphism $^{\s}M\to {^{\s}N}$. To avoid confusion, we denote this morphism by $^{\s}\varphi$, but as linear maps $\varphi={^{\s}\varphi}$. The functor $^{\s}(-)$ is an autoequivalence of $R{\rm GrMod}$ with inverse $^{\s^{-1}}(-)$. Note that $M$ is a graded free left $R$-module if and only if $^{\s}M$ is, and the functors $^{\s}(-)$ and $(-)[n]$ commute. 

\smallbreak

Consider the non-standard notation introduced below.

\begin{notn}[$f$, $\sigma$, $^{\tw}(-)$,$^{\tw^{-1}}(-)$, $\lambda_f^M$] \label{not:tw} Let $f\in R$ be a normal, regular homogeneous element of positive degree $d$ and let $\s:R\to R$ be the graded automorphism of $R$ determined by the equation $rf=f\s(r)$. We denote the composite autoequivalence $^{\s}(-)[-d]$ by
$^{\tw}(-)$ and its inverse by $^{\tw^{-1}}(-)$.
For any graded left $R$-module $M$, left multiplication by $f$ defines a graded homomorphism $$\lambda_f^M:{^{\tw}M}\to M.$$ 
\end{notn}

Moreover, if $\varphi:M\to N$ is a graded homomorphism of graded left $R$-modules, we have that $\lambda^N_f\circ {^{\tw}\varphi} = \varphi \circ \lambda_f^M$. 

\medbreak We end this section by recalling the definitions of some graded algebras and graded module categories that are important to our work: skew group rings, maximal Cohen-Macaulay modules, and Artin-Schelter regular algebras.

\begin{defn}
Given a graded $\k$-algebra $R$ and a finite subgroup $G\subset {\rm Aut}(R)$ of graded automorphisms of $R$, we can form the {\it skew group ring} $R\#G$ as follows. As a graded vector space, 
$R\#G = R\tsr_{\k} \k G$,
and multiplication is given by
$$(r_1\tsr g_1)(r_2\tsr g_2)=r_1g_1(r_2)\tsr g_1g_2,$$
for $r_1, r_2 \in R$ and $g_1, g_2 \in G$.
\end{defn}
In particular, $R\#G$ is a graded free $R$-module.
Observe that $R\# G$ is  a locally finite graded $\k$-algebra. Since $|G|$ is invertible in $\k$, the zeroth component \linebreak 
$(R\# G)_0\cong \k G$ is semisimple. In this case, viewing each graded component of $R\#G$ as a $\k G$-module, we obtain a direct sum decomposition
$$R\#G = \bigoplus_{\chi} N^{\chi}$$
where the sum is taken over the irreducible characters of $G$ and $N^{\chi}$ is the sum of the irreducible $\k G$-submodules of $R\#G$ of character $\chi$. It follows from character theory that the decomposition holds in the category of modules over the fixed subalgebra $(R\#G)^G$. We call $N^{\chi}$ the \emph{weight submodule} for $\chi$.

\smallbreak Next, we consider the class of (graded) maximal Cohen-Macaulay modules that are homologically well-behaved, but first we need to recall the notion of depth.

\begin{defn}
The {\it depth} of a left (or right) $R$-module $M$ is defined to be 
$$\text{depth}(M):= \inf\{i ~|~ \Ext_R^i(\k, M) \neq 0\}.$$
If $\Ext_R^i(\k, M) = 0$ for all $i$, then $\text{depth} M = \infty$.
\end{defn}

Here, $\Ext_R(-,-)$ is the derived functor of the graded Hom functor $\Hom(M,N)=\bigoplus\limits_{n\in\mathbb{Z}} \Hom_{R{\rm GrMod}}(M,N[n])$. 

\begin{defn} \label{def:MCM} Let $R$ be a graded left noetherian $\k$-algebra. A finitely generated graded $R$-module $M$ is called \emph{(graded) maximal Cohen-Macaulay (MCM)} provided that $\Ext_R^i(M,R)=0$ for all $i\neq 0$. 
\end{defn}

  Graded maximal Cohen-Macaulay $R$-modules form a full subcategory of $R{\rm grmod}$, which we denote by $MCM(R)$. The category $MCM(R)$ inherits the Krull-Schmidt property from $R{\rm grmod}$.
  
  \smallskip Moreover, we also consider the category of {\it stable} maximal Cohen-Macaulay modules, which we denote $\underline{MCM}(R)$, to have the same objects as $MCM(R)$, but for $M,N \in MCM(R)$, we have
$$\text{Hom}_{\underline{MCM}(R)}(M,N) = \text{Hom}_R(M,N)/V$$
where $V$ is the subspace of morphisms which factor through a graded free $R$-module.

\smallbreak Finally, we recall the Artin-Schelter regularity condition on graded $\k$-algebras.
  
\begin{defn} \label{ASregular} A connected graded $\k$-algebra $A$ is called {\it Artin-Schelter (AS)- regular} of dimension $n$ if $A$ has global dimension $n$, finite Gelfand-Kirillov dimension, and if it satisfies the {\it Artin-Schelter Gorenstein} condition, namely that $\text{Ext}_A^i(\k,A) = \delta_{i,n}\k.$ 
\end{defn}

One consequence of having this property is that the MCM condition can be verified via the result below.

\begin{prop}\cite[Lemma~5.3]{CCKM} \label{prop:MCMcheck}
Let $A$ be a left noetherian, AS-regular, let $f$ be a homogeneous normal element of $A$ of positive degree, and let $B:=A/(f)$. Then for any finitely generated graded left $B$-module, we obtain that $\textnormal{pd}_A(M) = 1$ if and only if $\textnormal{Ext}_B^i(M,B) = 0$ for all $i \neq 0$. \qed
\end{prop}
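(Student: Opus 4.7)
The plan is to reduce the statement to a computation of $\Ext_A^*(B,A)$ combined with a change-of-rings spectral sequence; assume $M \neq 0$. Since $f$ is normal, regular, and of positive degree $d$, the sequence $0\to {}^{\tw}A\xrightarrow{\lambda_f} A\to B\to 0$ is a graded $A$-projective resolution of $B$ of length one, so $\Ext_A^q(B,A) = 0$ for $q\geq 2$. First I would apply $\Hom_A(-,A)$ and use the identifications $\Hom_A(A,A)\cong A$ and $\Hom_A({}^{\tw}A,A)\cong {}^{\sigma^{-1}}\!A[d]$ (both via evaluation at $1$, with the twist arising from the twisted left action on ${}^{\tw}A$). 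Under these, the induced map is left multiplication by $f$, which is injective by regularity. Using $\sigma(f) = f$ (immediate from setting $r = f$ in $rf = f\sigma(r)$), the automorphism $\sigma$ descends to $B$, and the cokernel ${}^{\sigma^{-1}}\!B[d]$ is isomorphic to $B[d]$ as a graded left $B$-module via $x\mapsto\sigma(x)$. This yields $\Hom_A(B,A) = 0$ and $\Ext_A^1(B,A)\cong B[d]$.

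Next, I would apply the Grothendieck change-of-rings spectral sequence
$$E_2^{p,q} = \Ext_B^p\bigl(M,\,\Ext_A^q(B,A)\bigr)\;\Longrightarrow\;\Ext_A^{p+q}(M,A),$$
available since $M$ is a $B$-module. By the previous Ext calculation, the $E_2$-page is concentrated in row $q=1$, where $E_2^{p,1}\cong \Ext_B^p(M,B)[d]$, so the spectral sequence collapses and produces graded isomorphisms $\Ext_A^{i+1}(M,A)\cong \Ext_B^i(M,B)[d]$ for every $i\geq 0$. Since $M$ is nonzero and annihilated by $f$ while $A$ has no $f$-torsion, $M$ cannot be a summand of a graded free $A$-module, so $\text{pd}_A(M)\geq 1$. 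Combined with the isomorphism, $\text{pd}_A(M) = 1$ (equivalently $\Ext_A^i(M,A) = 0$ for $i\geq 2$) is therefore equivalent to $\Ext_B^j(M,B) = 0$ for all $j\geq 1$, the MCM condition.

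The main obstacle I anticipate is the careful bookkeeping in the first step: one must track both the degree shift by $d$ and the $\sigma^{-1}$-twist appearing in $\Hom_A({}^{\tw}A,A)$, and then use $\sigma(f) = f$ to descend $\sigma$ to $B$ so that the twist becomes trivial up to isomorphism on $B[d]$. A secondary technicality is justifying the convergence of the spectral sequence in the locally finite graded noncommutative setting, but this is routine since $\Hom_A(B,-)$ sends $A$-injectives to $B$-injectives by adjunction.
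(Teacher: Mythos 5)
Your argument is correct. The paper itself gives no proof of this proposition --- it is quoted from \cite[Lemma~5.3]{CCKM} with only a citation --- but your route (dualizing the length-one resolution $0\to{}^{\tw}A\xrightarrow{\lambda_f}A\to B\to 0$ to get $\Hom_A(B,A)=0$ and $\Ext_A^1(B,A)\cong B[d]$ up to twist, then collapsing the change-of-rings spectral sequence to obtain $\Ext_A^{i+1}(M,A)\cong\Ext_B^i(M,B)[d]$) is the standard ``Rees lemma'' argument and is surely the substance of the cited proof. The only step you invoke without justification is the identity $\textnormal{pd}_A(M)=\sup\{i:\Ext_A^i(M,A)\neq 0\}$ hidden in your parenthetical ``equivalently''; this is standard for finitely generated graded modules over a left noetherian connected graded algebra of finite global dimension (take a minimal graded free resolution: if $\Ext_A^p(M,A)=0$ at $p=\textnormal{pd}_A(M)$ then the last differential splits, contradicting minimality by graded Nakayama), so it is not a gap, but it is worth stating since it is exactly where AS-regularity (finite global dimension) enters. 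Your handling of the $M=0$ edge case and of the irrelevance of the $\sigma$-twist (an autoequivalence) is fine.
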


\begin{rmk}
\label{rmk:MCMdef}
The definition of graded MCM module given in Definition \ref{def:MCM} is different from the definition used in \cite{CKWZ2}. As shown in \cite[Proposition 4.3]{Jorg1} the two definitions are equivalent when the algebra $R$ is noetherian AS-regular, or is the quotient of a noetherian AS-regular algebra by a normal regular element (as then $R$ satisfies the $\chi$-condition). If $R$ is noetherian AS-regular, every MCM $R$-module is graded free (see \cite[Lemma 3.13]{CKWZ2}).
\end{rmk}


\section{On twisted matrix factorizations} \label{sec:prelim}
The goal of this section is to provide preliminary results on {\it twisted matrix factorizations} as defined in \cite{CCKM}, and as a consequence, to generalize several results on matrix factorizations in the commutative setting.

\medskip To begin, let us recall the notation from Section~\ref{sec:back}; see also Notation~\ref{not:tw}.

\begin{notn} \label{not:Rf}
[$R$, $f$, $d$, $\sigma$, $S$, $A$, $B$]\; For the rest of the paper, let $R$ be a noetherian, connected, $\mathbb{N}$-graded, locally finite-dimensional algebra over $\k$.  As in Section~\ref{sec:back}, let $f \in R_d$ be a normal, regular homogeneous element of positive degree $d$, and let $\sigma$ be the normalizing automorphism of~$f$. Let $S$ denote the quotient algebra $R/(f)$. 

Moreover, we reserve the notation $A$ for a  noetherian Artin-Schelter (AS-)regular algebra and we let $B:=A/(f)$ for $f$ as above. 
\end{notn}

\begin{defn}[$F$, $G$, $TMF_R(f)$, $TMF(f)$] \label{def:TMF} Consider the following terminology.
\begin{enumerate} 
\item A \emph{twisted (left) matrix factorization of $f$ over $R$} is a pair $$(\varphi:F\to G, \quad \psi:{^{\tw}G}\to F)$$ of graded left $R$-module homomorphisms, where $F$ and $G$ free graded $R$-modules of finite rank, and $\varphi\psi = \lambda_f^G$ and $\psi{^{\tw}\varphi}=\lambda_f^F$. (Note that ${^{\tw}G}$ is free whenever $G$ is.)
\item A {\it morphism} $(\varphi,\psi)\to (\varphi', \psi')$ of twisted matrix factorizations is a pair of graded $R$-module homomorphisms $(\alpha:F\to F',\; \beta:G\to G')$ such that $\varphi'\alpha =\beta\varphi$; it is an {\it isomorphism} if $\alpha$ and $\beta$ are isomorphisms.
\item Using the objects in (1) and morphisms in (2), the resulting category of twisted matrix factorizations of $f$ over $R$ is denoted $TMF_R(f)$, or just $TMF(f)$. 
\end{enumerate}
\end{defn}

It is easy to see that the maps above $\varphi, \psi$ are injective when $f$ is regular. 

Since $R$ is noetherian, we may assume that if $(\varphi:F\to G,\psi:{^{\tw}G}\to F)$ is a twisted matrix factorization, then $\rank(F)=\rank(G)$. This equality need not hold otherwise, as noted in \cite[Remark 4.6]{MU}.

\smallbreak It is straightforward to show that the category $TMF(f)$ is preserved under both the twist and shift functors. Namely, if $(\varphi, \psi)$ is a twisted matrix factorization of $f$, then so is $(\psi, {^{\tw}\varphi})$. Likewise, $(\varphi,\psi)[n]:=(\varphi[n],\psi[n])$ is a twisted matrix factorization for any $n\in\mathbb{Z}$.

\smallbreak The following twisted matrix factorizations are of interest in this work; recall Notation~\ref{not:tw}. 

\begin{defn} \label{def:twistedfactor} Take $(\varphi:F\to G,~\psi:{^{\tw}G}\to F)\in TMF_R(f)$.
\begin{enumerate} 
\item $(\varphi,\psi)$ is called \emph{trivial} if $(\varphi,\psi)\cong(\lambda_f^F,1_{^{\tw}F})$ or $(\varphi,\psi)\cong(1_F,\lambda_f^F)$, where $F$ is a graded free $R$-module. 
\smallskip 

\item $(\varphi,\psi)$ is called  \emph{irrelevant} if it is  trivial with $F=0$. 
\smallskip 

\item If $(\varphi',\psi')$ is another twisted matrix factorization of $f$, then the {\it direct sum} of $(\varphi,\psi)$ and $(\varphi',\psi')$ is defined as $$(\varphi,\psi)\oplus(\varphi',\psi'):=(\varphi\oplus\varphi',\psi\oplus\psi'),$$ which is also a twisted matrix factorization of $f$.
\smallskip 

\item   If $(\varphi,\psi)$ is not irrelevant and is not isomorphic to a direct sum of non-irrelevant elements of $TMF_R(f)$, then $(\varphi,\psi)$  is called \emph{indecomposable}. 
\smallskip 

\item If $(\varphi,\psi)$ is not isomorphic to a twisted matrix factorization having a non-irrelevant, trivial direct summand, then we say $(\varphi,\psi)$ is \emph{reduced}. 
\end{enumerate}
\end{defn}

Note that irrelevant twisted matrix factorizations are reduced; they are the zero object of the additive category $TMF_R(f)$. 

\begin{notn}[$TMF_R^0(f)$, $TMF_R^t(f)$]
Let $TMF_R^0(f)$ denote the full subcategory of $TMF_R(f)$ consisting of factorizations $(\varphi,\psi)$ such that $\coker\varphi=0$. Let $TMF_R^t(f)$ denote the full subcategory of $TMF_R(f)$ consisting of finite direct sums of 
trivial factorizations.  
\end{notn}

Note that $TMF_R^0(f)$ contains the irrelevant factorization. It is also closed under direct sums and grading shifts (it is an additive subcategory of $TMF_R(f)$), but it is not closed under $^{\tw}(-)$. 

\smallskip

The first two parts of the following result show that $TMF_R^t(f)$ is the smallest additive subcategory of $TMF_R(f)$ that contains $TMF_R^0(f)$ and is closed under grading shifts and ${^{\tw}(-)}$.

\smallskip

\begin{prop}
\label{KScats} Assume $R$ is graded noetherian.
\begin{enumerate}
\item[\textnormal{(1)}] $TMF_R^0(f)$ is a subcategory of $TMF_R^t(f)$.
\smallskip

\item[\textnormal{(2)}]  If $(\varphi,\psi)\in TMF_R^t(f)$, then $(\varphi,\psi)\cong (\varphi',\psi')\oplus  {^{\tw}(\varphi'',\psi'')}$ where $ (\varphi',\psi')$ and   $(\varphi'',\psi'')\in TMF_R^0(f)$.
\smallskip

\item[\textnormal{(3)}]  The category $TMF_R^0(f)$ is equivalent to the category ${\rm proj}(R)$ of finitely generated, graded projective $R$-modules. 
\smallskip

\item[\textnormal{(4)}]  The categories $TMF_R^0(f)$ and $TMF_R^t(f)$ are Krull-Schmidt categories.
\end{enumerate}
\end{prop}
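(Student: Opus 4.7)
The plan is to handle the four parts in sequence, leveraging the single key observation that in $TMF_R^0(f)$ the map $\varphi$ is forced to be a graded isomorphism.

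For (1), let $(\varphi\colon F\to G,\ \psi\colon{^{\tw}G}\to F)\in TMF_R^0(f)$. Since $\coker\varphi=0$, $\varphi$ is surjective. The identity $\psi\circ{^{\tw}\varphi}=\lambda_f^F$, together with the regularity of $f$, forces injectivity: if $\varphi(x)=0$ for some $x\in F$, then (using that ${^{\tw}\varphi}$ equals $\varphi$ as an underlying linear map) we get $fx=\lambda_f^F(x)=\psi({^{\tw}\varphi}(x))=0$, so $x=0$. Hence $\varphi$ is a graded isomorphism. The pair $(\varphi,1_G)$ is then a morphism $(\varphi,\psi)\to(1_G,\lambda_f^G)$ with both components isomorphisms, exhibiting $(\varphi,\psi)$ as isomorphic to a trivial object of $TMF_R^t(f)$.

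For (2), observe that the twist functor $(\varphi,\psi)\mapsto(\psi,{^{\tw}\varphi})$ sends the trivial factorization $(1_F,\lambda_f^F)$ to $(\lambda_f^F,1_{^{\tw}F})$. Thus every trivial factorization is either already an object of $TMF_R^0(f)$ or the twist of one. An arbitrary object of $TMF_R^t(f)$ is by definition a finite direct sum of trivials, and collecting summands by type produces the decomposition $(\varphi',\psi')\oplus{^{\tw}(\varphi'',\psi'')}$ with $(\varphi',\psi'),(\varphi'',\psi'')\in TMF_R^0(f)$.

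For (3), I define $\mathcal F\colon TMF_R^0(f)\to{\rm proj}(R)$ on objects by $(\varphi\colon F\to G,\psi)\mapsto F$ and on morphisms by $(\alpha,\beta)\mapsto\alpha$. Essential surjectivity uses that, since $R$ is connected graded noetherian, the graded Nakayama lemma implies every finitely generated graded projective $R$-module is free, so a given $F$ is realized as $\mathcal F(1_F,\lambda_f^F)$. For full faithfulness, by part (1) each $\varphi$ in an object of $TMF_R^0(f)$ is an isomorphism, so the constraint $\varphi'\alpha=\beta\varphi$ forces $\beta=\varphi'\alpha\varphi^{-1}$, giving a bijection $\Hom_{TMF_R^0(f)}((\varphi,\psi),(\varphi',\psi'))\cong\Hom_R(F,F')$.

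Finally, for (4), the Krull-Schmidt property of $TMF_R^0(f)$ is transported across (3) from ${\rm proj}(R)$, which inherits Krull-Schmidt from $R{\rm grmod}$ (itself Krull-Schmidt by the noetherian hypothesis, as recalled in Section~\ref{sec:back}). For $TMF_R^t(f)$, the decomposition from (2) together with the fact that the twist is an autoequivalence of $TMF_R(f)$ shows that every object breaks into indecomposables of the form $(1_{R[n]},\lambda_f^{R[n]})$ and $(\lambda_f^{R[n]},1_{^{\tw}R[n]})$; a direct computation shows that the degree-zero endomorphism ring of each such indecomposable is $R_0=\k$, which is local. The main obstacle I anticipate is the verification of full faithfulness in (3), where one must confirm that the asymmetry between $\varphi$ and $\psi$ in the definition of morphism does not introduce any hidden constraints beyond the commuting square on $\varphi$; the remaining steps are largely bookkeeping once $\varphi$ is known to be invertible.
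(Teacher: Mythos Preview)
Your proposal is correct and follows essentially the same approach as the paper. The only minor differences are cosmetic: in (1) you establish injectivity of $\varphi$ directly from $\psi\circ{^{\tw}\varphi}=\lambda_f^F$ and regularity of $f$, whereas the paper invokes the noetherian hypothesis to equate ranks and deduce injectivity from surjectivity; in (3) you verify essential surjectivity and full faithfulness of a single functor, while the paper exhibits an explicit quasi-inverse pair $\mathcal T,\mathcal P$---the content is identical.
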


\begin{proof}
(1) Suppose $(\varphi:F\to G,\psi:{^{\tw}G}\to F)\in TMF_R^0(f)$. Since $R$ is graded noetherian, we have  $\rank(F)=\rank(G)$, as noted above. Thus the map $\varphi$ is a graded isomorphism and $(\varphi,\psi)\cong (1_F,\l_f^F)$ via the isomorphism $(1_F,\varphi^{-1})$.
\smallskip

(2) This follows immediately from the definition of $TMF_R^t(f)$, the additivity of ${^{\tw}(-)}$, and the fact that $(\l_f^F,1_{^{\tw}F})={^{\tw}(1,\l_f^F)}$.
\smallskip

(3) First we define a functor $\mathcal T:{\rm proj}(R)\to TMF_R^0(f)$. Let $F, G\in R{\rm grmod}$ be graded projective. Then $F$ and $G$ are finitely generated, graded free modules. Put $\mathcal T(F)=(1_F,\l_f^F)$. Clearly, $\mathcal T(F)\in TMF_R^0(f)$. If $\d:F\to G$ is a degree~0 homomorphism of graded $R$-modules, then $\mathcal T(\d)=(\d,\d):\mathcal T(F)\to \mathcal T(G)$ is a morphism of twisted matrix factorizations.

Next we define $\mathcal P:TMF_R^0(f)\to {\rm proj}(R)$. If $(\varphi:F\to G,\psi:{^{\tw}G}\to F)\in TMF_R^0(f)$, put $\mathcal P(\varphi,\psi) = F$, and if $(\a,\b):(\varphi,\psi)\to (\varphi',\psi')$ is a morphism in $TMF_R^0(f)$, put $\mathcal P(\a,\b)=\a$.

It is clear from the definitions that $\mathcal{PT}={\rm id}_{{\rm proj}(R)}$. On the other hand, if $(\varphi:F\to G,\psi:{^{\tw}G}\to F)\in TMF_R^0(f)$, then as in the proof of (1) we have $(\varphi,\psi)\cong (1_F,\l_f^F)=\mathcal{TP(\varphi,\psi)}$ via the isomorphism $(1_F,\varphi^{-1})$. Naturality is a consequence of the definition of morphism of twisted matrix factorizations, so $\mathcal{TP}\cong {\rm id}_{TMF_R^0(f)}$. 

\smallskip

(4) The category ${\rm proj}(R)$ is a Krull-Schmidt category and the equivalence $\mathcal T$ is additive, so $TMF_R^0(f)$ is a Krull-Schmidt category. The same goes for $TMF_R^t(f)$ by part (2).
\end{proof}

Now let us consider two more preliminary results on the category $TMF_R(f)$.

\begin{prop} 
\label{TMFdecomp}
Assume $R$ is graded noetherian. If $(\varphi,\psi)\in TMF_R(f)$, then $(\varphi,\psi)\cong (\varphi',\psi')\oplus  (\varphi'',\psi'')$ for some $(\varphi'',\psi'')\in TMF_R^t(f)$ and  $(\varphi',\psi') \in TMF_R(f)$ is reduced.
\end{prop}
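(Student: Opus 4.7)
The plan is to strip off non-irrelevant trivial summands from $(\varphi,\psi)$ one at a time and to show that this procedure must terminate at a reduced factorization. Since $R$ is graded noetherian, any $(\varphi\colon F\to G,\;\psi\colon {^{\tw}G}\to F)\in TMF_R(f)$ satisfies $\rank F = \rank G$ (as recorded right after Definition~\ref{def:TMF}), so I would proceed by induction on $n := \rank F \in \Z_{\geq 0}$.

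The base case $n=0$ forces $(\varphi,\psi)$ to be irrelevant and hence already reduced, so I take $(\varphi',\psi') = (\varphi,\psi)$ together with the irrelevant factorization as $(\varphi'',\psi'')$. For the inductive step $n>0$, if $(\varphi,\psi)$ is already reduced then the same choice works. Otherwise Definition~\ref{def:twistedfactor}(5) supplies an isomorphism
$$(\varphi,\psi) \;\cong\; (\widetilde\varphi,\widetilde\psi) \oplus (\varphi_t,\psi_t)$$
in which $(\varphi_t,\psi_t)$ is a non-irrelevant trivial factorization. Since the free module underlying a non-irrelevant trivial factorization has positive rank, the rank of the free module underlying $(\widetilde\varphi,\widetilde\psi)$ is strictly less than $n$, so the inductive hypothesis applies to $(\widetilde\varphi,\widetilde\psi)$ and yields $(\widetilde\varphi,\widetilde\psi) \cong (\varphi',\psi') \oplus (\widetilde\varphi'',\widetilde\psi'')$ with $(\varphi',\psi')$ reduced and $(\widetilde\varphi'',\widetilde\psi'') \in TMF_R^t(f)$. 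Setting $(\varphi'',\psi'') := (\widetilde\varphi'',\widetilde\psi'') \oplus (\varphi_t,\psi_t)$, the result is a finite direct sum of trivial factorizations and therefore lies in $TMF_R^t(f)$ by definition, while $(\varphi,\psi) \cong (\varphi',\psi') \oplus (\varphi'',\psi'')$ gives the desired decomposition.

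This amounts essentially to a termination argument for a greedy decomposition procedure, so I do not foresee a serious conceptual obstacle; the role of the noetherian hypothesis is limited to securing a well-defined, strictly decreasing rank invariant. The one point that merits care is to confirm that the isomorphism provided by Definition~\ref{def:twistedfactor}(5)---``$(\varphi,\psi)$ is isomorphic to a twisted matrix factorization having a non-irrelevant, trivial direct summand''---really yields an internal direct-sum decomposition inside $TMF_R(f)$ in which both components $\varphi$ and $\psi$ split compatibly, so that one may legitimately apply the inductive hypothesis to the complementary summand. This is automatic from the componentwise definitions of morphism and direct sum in Definition~\ref{def:TMF}(2) and Definition~\ref{def:twistedfactor}(3), but it is the step a careful reader is most likely to want spelled out explicitly.
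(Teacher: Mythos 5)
Your proof is correct and follows essentially the same route as the paper: induction on $\rank F$, splitting off a non-irrelevant trivial summand (which has positive rank, so the complement has strictly smaller rank) and invoking the inductive hypothesis on the complement. The paper's own argument is exactly this, with the noetherian hypothesis used just as you say, to guarantee the rank is a well-defined strictly decreasing invariant.
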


\begin{proof}
The statement is  true if $(\varphi,\psi)$ is reduced, since the irrelevant factorization is in $TMF_R^t(f)$. 
If $(\varphi,\psi)$ is not reduced, then there exist twisted matrix factorizations $(\varphi':F'\to G', \psi':{^{\tw}G'}\to F')$ and 
$(\varphi'':F''\to G'', \psi'':{^{\tw}G''}\to F'')$ such that 
$(\varphi,\psi)\cong (\varphi',\psi')\oplus (\varphi'',\psi'')$ and $(\varphi'',\psi'')$ is trivial and not irrelevant. In particular, $\rank(F'')\ge 1$.
 Furthermore, since $R$ is graded noetherian, $\rank(F')<\rank(F)$, and the result follows by induction on $\rank(F)$.
\end{proof}

\begin{prop}
\label{reducedIsom}
A twisted matrix factorization $(\varphi,\psi)\in TMF_R(f)$ is reduced if and only if $\coker\varphi$ has no free $S$-module direct summand. Reduced graded matrix factorizations $(\varphi,\psi)$ and $(\varphi',\psi')\in TMF_R(f)$ are isomorphic if and only if $\coker\varphi\cong \coker\varphi'$ as $S$-modules. 
\end{prop}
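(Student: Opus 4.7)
The proposition has two parts, which I address separately. For the first---reduced iff $\coker\varphi$ has no free $S$-summand---I plan to prove both directions. For ``free summand implies not reduced'': assume $\coker\varphi \cong N \oplus S[-a]$ with $\bar{y}\in(\coker\varphi)_a$ generating the $S[-a]$ summand. I lift $\bar{y}$ to $y\in G_a$; since $\bar{y}\notin \mathfrak{m}\coker\varphi$ (as a generator of a free $S$-summand), we get $y\notin \mathfrak{m}G$, and graded Nakayama yields $G=Ry\oplus G'$ with $Ry\cong R[-a]$ free. The relation $f\bar{y}=0$ gives $fy=\varphi(x_0)$ for some $x_0\in F$; the identity $\varphi\psi=\lambda_f^G$ together with injectivity of $\varphi$ forces $\psi(y)=x_0$. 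Setting $F':=\varphi^{-1}(G')$, one checks using regularity of $f$ that $F=Rx_0\oplus F'$ with $Rx_0\cong R[-a-d]$ free, and that $\varphi$ and $\psi$ respect both graded direct-sum decompositions. The restriction of $(\varphi,\psi)$ to $(Rx_0,Ry)$ is then isomorphic to $(\lambda_f^{R[-a]},1_{{}^{\tw}R[-a]})$, exhibiting a non-irrelevant trivial direct summand.

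For the converse of Part~1: if $(\varphi,\psi)\cong(\varphi_0,\psi_0)\oplus(\varphi_1,\psi_1)$ with $(\varphi_1,\psi_1)$ trivial and non-irrelevant, then the case $(\varphi_1,\psi_1)\cong(\lambda_f^{F_0},1_{{}^{\tw}F_0})$ directly yields $\coker\varphi\cong\coker\varphi_0\oplus(F_0/fF_0)$, with the second summand a nonzero free $S$-module. The case $(\varphi_1,\psi_1)\cong(1_{F_0},\lambda_f^{F_0})$ contributes $0$ to $\coker\varphi$ and requires further argument: my plan is to invoke the flip involution $(\varphi,\psi)\mapsto(\psi,{}^{\tw}\varphi)$ together with Krull--Schmidt uniqueness in $TMF_R^t(f)$ (Proposition~\ref{KScats}(4)) to re-present the decomposition so that a $(\lambda_f,1)$-type summand---whose cokernel is a nonzero free $S$-module---becomes exposed, since the flip interchanges the two forms of trivial summand up to ${}^{\tw}(-)$.

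For the second part, given an $S$-module isomorphism $\alpha : \coker\varphi\xrightarrow{\sim}\coker\varphi'$, I plan to lift $\alpha$ to a TMF isomorphism as follows. Since $\varphi,\varphi'$ are injective ($f$ being regular), both $0\to F\xrightarrow{\varphi} G\to \coker\varphi\to 0$ and the primed version are short exact sequences of graded $R$-modules. Using graded projectivity of $G$, I lift $\alpha$ to a graded $R$-map $\beta : G\to G'$, and the snake lemma supplies $\gamma : F\to F'$ with $\varphi'\gamma=\beta\varphi$. By Part~1, reducedness of both factorizations implies that both resolutions are graded-minimal (entries of $\varphi,\varphi'$ lying in $\mathfrak{m}$), so a standard graded Nakayama argument forces $\beta$ and $\gamma$ to be isomorphisms. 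Finally I verify compatibility of $(\gamma,\beta)$ with $\psi,\psi'$: since $\varphi'\bigl(\psi'\cdot {}^{\tw}\beta\bigr)=\lambda_f^{G'}\cdot {}^{\tw}\beta=\beta\lambda_f^G=\beta\varphi\psi=\varphi'\gamma\psi$, injectivity of $\varphi'$ yields $\psi'\cdot {}^{\tw}\beta=\gamma\psi$, confirming that $(\gamma,\beta)$ is an isomorphism in $TMF_R(f)$.

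The main obstacle is the $(1_{F_0},\lambda_f^{F_0})$-type trivial summand in the converse of Part~1, since such a summand is invisible from $\coker\varphi$ alone. Handling it cleanly requires carefully combining the flip involution, Krull--Schmidt uniqueness within $TMF_R^t(f)$, and the interaction of the autoequivalence ${}^{\tw}(-)$ with direct-sum decompositions; an explicit matrix-level analysis of how unit entries of $\varphi$ can be shuffled between basis changes and the two forms of trivial summand may be needed to make the argument watertight.
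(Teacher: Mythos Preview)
The paper's proof is entirely by citation to \cite{CCKM}: Proposition~2.9 and Lemma~2.11 there for the first statement, Proposition~2.4 plus uniqueness of minimal graded free resolutions for the second. Your direct argument for Part~2---lifting the cokernel isomorphism to the free presentations, using minimality and graded Nakayama to force $\beta,\gamma$ to be isomorphisms, then deducing $\psi'\,{}^{\tw}\beta=\gamma\psi$ from injectivity of $\varphi'$---is exactly what that citation unpacks to and is correct. Your ``free summand $\Rightarrow$ not reduced'' direction is also the standard idea, though the claimed splitting $F=Rx_0\oplus\varphi^{-1}(G')$ needs more care than you indicate: an arbitrary complement $G'$ of $Ry$ in $G$ need not be compatible with the decomposition $\coker\varphi\cong N\oplus S[-a]$, so one must either choose $G'$ using the projection $\coker\varphi\twoheadrightarrow S[-a]$ or argue via the minimal resolution of each summand.

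Your handling of the $(1_{F_0},\lambda_f^{F_0})$ case, however, has a genuine gap. The flip $(\varphi,\psi)\mapsto(\psi,{}^{\tw}\varphi)$ does swap the two trivial types, but it produces a \emph{different} factorization whose first map is $\psi$, so any conclusion drawn from it concerns $\coker\psi$, not $\coker\varphi$. Krull--Schmidt in $TMF_R^t(f)$ cannot rescue this: the indecomposable summands of $(\varphi,\psi)$ are uniquely determined up to isomorphism, and $(1_{F_0},\lambda_f^{F_0})$ is never isomorphic to a $(\lambda_f,1)$-type factorization, so no re-decomposition of $(\varphi,\psi)$ will expose one. Concretely, $(\varphi_0,\psi_0)\oplus(1_R,\lambda_f^R)$ with $(\varphi_0,\psi_0)$ reduced has $\coker\varphi=\coker\varphi_0$ with no free $S$-summand, yet is not reduced under Definition~\ref{def:twistedfactor}(5) read literally; so the implication you are attempting cannot be saved by any rearrangement. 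The results being cited from \cite{CCKM} tie ``reduced'' to minimality of the full $2$-periodic complex (equivalently, no unit entries in $\varphi$ \emph{or} in $\psi$), and that is the formulation you should work with rather than trying to manufacture a free summand of $\coker\varphi$ from a $(1,\lambda_f)$-type piece.
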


\begin{proof}
The first statement follows from Proposition 2.9 and Lemma 2.11 of \cite{CCKM}. The second statement follows from \cite[Proposition 2.4]{CCKM} and the fact that minimal graded free resolutions are chain isomorphic if and only if they resolve isomorphic graded modules. 
\end{proof}

Now we prove that $TMF_A(f)$ is a Krull-Schmidt category (when $A$ is noetherian AS-regular). 

\begin{thm}[Krull-Schmidt Theorem for $TMF_A(f)$]
\label{KS}
Recall Notation~\ref{not:Rf}.  If \linebreak $(\varphi,\psi)\in TMF_A(f)$ is not irrelevant, then $(\varphi,\psi)$ is isomorphic to a finite direct sum of indecomposable twisted matrix factorizations with local endomorphism rings. The summands are uniquely determined up to permutation and isomorphism.
\end{thm}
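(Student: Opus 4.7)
The plan is to bootstrap the Krull-Schmidt Theorem for $TMF_A(f)$ from the (already known) Krull-Schmidt property of $B\text{grmod}$ where $B=A/(f)$, using the cokernel correspondence of Proposition~\ref{reducedIsom}. The first step is to invoke Proposition~\ref{TMFdecomp} to write $(\varphi,\psi)\cong (\varphi_r,\psi_r)\oplus (\varphi_t,\psi_t)$, where $(\varphi_r,\psi_r)$ is reduced and $(\varphi_t,\psi_t)\in TMF_A^t(f)$. Proposition~\ref{KScats}(4) handles the trivial summand: $TMF_A^t(f)$ is already known to be Krull-Schmidt, and via Proposition~\ref{KScats}(3), its indecomposables correspond to indecomposable graded projective $A$-modules, whose endomorphism rings are local by Proposition~\ref{prop:local}. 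So it suffices to treat the reduced factor.

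Next, I would pass through the cokernel functor. Since $(\varphi_r,\psi_r)$ is reduced, Proposition~\ref{reducedIsom} tells us $M:=\coker\varphi_r$ is an MCM $B$-module with no free direct summand, and any two reduced factorizations with isomorphic cokernels are isomorphic. Because $A$ is noetherian and $f$ is normal and regular, $B$ is graded noetherian, so $B\text{grmod}$ is a Krull-Schmidt category (cf.\ the discussion following Notation~\ref{not:tilde}). Hence $M\cong \bigoplus_{i=1}^n M_i$ with each $M_i$ indecomposable graded MCM (MCM-ness is inherited from $M$), having no free summand (inherited from $M$), and with local endomorphism ring $\End_B(M_i)$. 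By Theorem~\ref{TMFequiv} each $M_i$ is the cokernel of a reduced $(\varphi_i,\psi_i)\in TMF_A(f)$. Since $\coker$ commutes with direct sums, $\coker(\bigoplus\varphi_i)\cong M\cong \coker\varphi_r$, and both $\bigoplus(\varphi_i,\psi_i)$ and $(\varphi_r,\psi_r)$ are reduced, Proposition~\ref{reducedIsom} yields the isomorphism $(\varphi_r,\psi_r)\cong\bigoplus(\varphi_i,\psi_i)$. Each $(\varphi_i,\psi_i)$ is indecomposable in $TMF_A(f)$, for any non-trivial decomposition into non-irrelevant summands would, after reducing via Proposition~\ref{TMFdecomp}, produce a non-trivial decomposition of $M_i$, contradicting indecomposability.

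Uniqueness of the decomposition up to permutation and isomorphism then follows immediately: any decomposition of $(\varphi,\psi)$ into indecomposable non-irrelevant summands splits (via Proposition~\ref{TMFdecomp} applied to each summand) into trivial and reduced pieces whose cokernels give a decomposition of $M\oplus(\text{projective part})$ into indecomposables in $B\text{grmod}$; the Krull-Schmidt Theorem in $B\text{grmod}$ together with Proposition~\ref{KScats} transports the uniqueness back to $TMF_A(f)$.

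The main obstacle is showing that each indecomposable $(\varphi_i,\psi_i)$ has a local endomorphism ring in $TMF_A(f)$, since a priori the cokernel functor only gives a ring map $\End_{TMF_A(f)}(\varphi_i,\psi_i)\to \End_B(M_i)$ whose target is local. I expect to handle this by verifying that $TMF_A(f)$ is Hom-finite over $\k$ (morphisms embed into $\Hom_A(F,F')\oplus \Hom_A(G,G')$, which is finite dimensional since $A$ is locally finite and $F,F',G,G'$ are of finite rank) and applying Fitting's Lemma directly: any endomorphism $(\alpha,\beta)$ of an indecomposable object in a Hom-finite additive category is either nilpotent or an isomorphism, which forces $\End_{TMF_A(f)}(\varphi_i,\psi_i)$ to be local. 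Alternatively, one can invoke \cite[Corollary~4.4]{Krause} once one verifies that idempotents split in $TMF_A(f)$, which follows from the splitting of idempotents in $A\text{grmod}$ together with the fact that direct summands of twisted matrix factorizations are twisted matrix factorizations.
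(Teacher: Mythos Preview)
Your overall strategy matches the paper's: reduce to the reduced case via Proposition~\ref{TMFdecomp} and Proposition~\ref{KScats}(4), pass to $M=\coker\varphi$ in $B\mathrm{grmod}$, decompose there using Krull--Schmidt, lift each $M_i$ to a reduced $(\varphi_i,\psi_i)$ via Theorem~\ref{TMFequiv}, and reassemble using Proposition~\ref{reducedIsom}. Existence and uniqueness are argued in the same way.

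The only substantive difference is how you obtain locality of $\End_{TMF_A(f)}(\varphi_i,\psi_i)$. The paper does this by proving directly (Lemma~\ref{nilpotentDecomp}(1)) that the cokernel map $\End(\varphi_i,\psi_i)\to\End_B(M_i)$ is a ring \emph{isomorphism}, so locality transfers from $\End_B(M_i)$ via Proposition~\ref{prop:local}. Your proposed route---Hom-finiteness plus idempotent splitting in $TMF_A(f)$---is a valid alternative; note that the splitting really does go through because $\varphi$ is injective (so if an idempotent $(\alpha,\beta)$ forces $\varphi$ to be block-diagonal, the equation $\varphi\psi=\lambda_f$ forces $\psi$ to be block-diagonal as well), and summands of finite-rank graded free modules over a connected graded algebra are again graded free. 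Your ``Fitting's Lemma directly'' phrasing is a bit loose, however: the usual Fitting decomposition needs kernels and images in the ambient category, which $TMF_A(f)$ does not obviously supply, so the idempotent-splitting formulation is the one to keep. One reason to prefer the paper's approach is that the explicit ring isomorphism yields the structural description of units in Lemma~\ref{nilpotentDecomp}(2), which is used later in the proof of Proposition~\ref{symmetricRoot}; your abstract argument establishes locality but not this finer form.
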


\begin{proof}
By Propositions \ref{TMFdecomp} and \ref{KScats},
it suffices to consider the case where $(\varphi,\psi)$ is reduced.
 
If $(\varphi,\psi)$ is reduced, then $M={\rm coker}\varphi$ is a
maximal Cohen-Macaulay $B$-module with no free direct summands by
Theorem~\ref{TMFequiv}. In particular, $M$ is finitely generated, so
by the Krull-Schmidt Theorem for $B{\rm grmod}$, we may write $M\cong
M_1\oplus\cdots\oplus M_n$ where each $M_i$ is a nonzero, non-free
indecomposable $B$-module. Since $M$ is MCM, the same is true of each
$M_i$. By Theorem \ref{TMFequiv}, there exist reduced twisted matrix
factorizations $(\varphi_1,\psi_1),\ldots,(\varphi_n,\psi_n)$ such
that $M_i={\rm coker}\varphi_i$. Then $(\varphi,\psi)\cong
(\varphi_1,\psi_1)\oplus\cdots\oplus(\varphi_n,\psi_n)$ by Proposition
\ref{reducedIsom}. Uniqueness follows from the uniqueness of the $M_i$
and again by Proposition \ref{reducedIsom}.

It remains to prove that the endomorphism ring of each indecomposable is local; this will be established in the next lemma.
\end{proof}

In addition to completing the proof of Theorem \ref{KS}, the next result explicitly describes the form of graded automorphisms of a twisted matrix factorization.
 
\begin{lemma}
\label{nilpotentDecomp}
Let $(\varphi:F\to G,~\psi:{^{\tw}G}\to F)\in TMF_R(f)$ and let $M=\coker \varphi$. 
\begin{enumerate}
\item[\textnormal{(1)}] If $(\varphi,\psi)$ is reduced, then there is a ring isomorphism $$E:=\End(\varphi,\psi)\cong \End_{R}(M).$$ 
\item[\textnormal{(2)}]  If $(\varphi,\psi)$ is reduced and indecomposable, then $E$ is local and every unit of $E$ has the form $c({\rm id}_F,{\rm id}_G)+(\rho_1,\rho_2)$ where $c\in \k$ is a nonzero scalar and $\rho_1$ and $\rho_2$ are nilpotent automorphisms of $F$ and $G$, respectively.
\end{enumerate}
\end{lemma}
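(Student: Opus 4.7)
The plan for (1) is to construct the natural ring homomorphism
\[
\Phi : E \longrightarrow \End_R(M), \qquad (\alpha, \beta) \longmapsto \bar\beta,
\]
where $\bar\beta$ denotes the endomorphism of $M = \coker \varphi$ induced by $\beta$, and to show $\Phi$ is a bijection. The relation $\beta\varphi = \varphi\alpha$ makes $\bar\beta$ well-defined, and functoriality of cokernels makes $\Phi$ a ring homomorphism. For surjectivity, I would lift any $\theta \in \End_R(M)$ along the projection $G\twoheadrightarrow M$ using projectivity of the graded free module $G$, obtaining $\beta : G \to G$; since $\beta$ preserves $\varphi(F) = \ker(G\twoheadrightarrow M)$ and $\varphi$ is injective (as $f$ is regular), there is a unique $\alpha : F \to F$ with $\varphi\alpha = \beta\varphi$, and the pair $(\alpha, \beta)$ is then an endomorphism of $(\varphi, \psi)$ lifting $\theta$.

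For injectivity of $\Phi$, assume $(\alpha,\beta)\in \ker\Phi$. Then $\beta(G) \subseteq \varphi(F)$, and projectivity of $G$ together with injectivity of $\varphi$ produce $\gamma : G \to F$ with $\beta = \varphi\gamma$; injectivity of $\varphi$ then forces $\alpha = \gamma\varphi$. The reducedness of $(\varphi,\psi)$ is used here to conclude $(\alpha,\beta) = 0$. The cleanest route is to appeal to Proposition~\ref{reducedIsom}: reduced twisted matrix factorizations of $M$ are unique up to isomorphism, and a nonzero null-homotopy $\gamma$ would witness a modification of $(\varphi,\psi)$ incompatible with this uniqueness. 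Alternatively, one exploits the fact that reducedness in the graded connected setting forces the entries of $\varphi$ to lie in $R_{\ge 1}$, and a direct graded-degree count on the components of $\gamma$ then forces $\gamma = 0$.

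For (2), we combine (1) with the Krull--Schmidt theory of $R{\rm grmod}$. Indecomposability of $(\varphi,\psi)$ is equivalent to indecomposability of $M$ by Proposition~\ref{reducedIsom}, so Proposition~\ref{prop:local} yields that $\End_R(M)$ is a local ring; hence so is $E$ by (1). Since $R{\rm grmod}$ is Hom-finite, $E$ is a finite-dimensional local $\k$-algebra, and because $\k$ is algebraically closed the residue field $E/\mathfrak m$ is $\k$ and the Jacobson radical $\mathfrak m$ is nilpotent. Every unit $(\alpha,\beta)\in E$ therefore decomposes as $c(\mathrm{id}_F, \mathrm{id}_G) + (\rho_1,\rho_2)$ with $c\in \k^\times$ and $(\rho_1,\rho_2)\in \mathfrak m$, and nilpotence of $(\rho_1,\rho_2)$ in $E$ translates componentwise to nilpotence of $\rho_1 \in \End(F)$ and $\rho_2 \in \End(G)$.

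The main obstacle is the injectivity step in (1). The projective-lifting argument produces the null-homotopy $\gamma$ easily, but confirming that reducedness forces $\gamma$ to vanish on the nose (and not merely up to homotopy) is delicate; it requires either a clean categorical appeal to the uniqueness of reduced factorizations in Proposition~\ref{reducedIsom} or a careful graded-degree bookkeeping exploiting the minimality encoded in ``no trivial summand''.
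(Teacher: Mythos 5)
Your surjectivity argument in (1) and your treatment of (2) match the structure of the paper's proof, but the injectivity step of (1) --- which you yourself flag as ``the main obstacle'' --- is a genuine gap, and neither of the two routes you sketch is carried out. The first route does not work as described: Proposition~\ref{reducedIsom} is a statement about isomorphism classes of reduced factorizations versus isomorphism classes of their cokernels, and the existence of a nonzero pair $(\gamma\varphi,\varphi\gamma)$ in the kernel of $E\to\End_R(M)$ does not ``modify'' the factorization or contradict that uniqueness in any direct way; there is no categorical shortcut here. What the paper actually does is your second route, applied directly to $\beta$ rather than to the null-homotopy $\gamma$: a kernel element satisfies $\im\beta\subseteq\im\varphi$, reducedness forces $\im\varphi\subseteq R_+G$ (no unit entries in a minimal presentation, $R$ connected), and the paper then argues that a \emph{degree-zero} graded endomorphism $\beta$ of the graded free module $G$ with $\im\beta\subseteq R_+G$ must vanish; injectivity of $\varphi$ then kills $\alpha$ as well, since $\varphi\alpha=\beta\varphi=0$. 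So the missing content is precisely this degree bookkeeping on $\beta$, and your proposal stops short of supplying it. (Note also that it is genuinely delicate: the vanishing is immediate for generators of $G$ in the lowest degree, but for generators in higher degrees one must actually engage with the degree distribution of $F$ and $G$; simply invoking ``reducedness'' is not enough.)

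Part (2) is correct and in fact slightly more streamlined than the paper's argument. You pass from indecomposability of $(\varphi,\psi)$ to indecomposability of $M$, invoke Proposition~\ref{prop:local} and the isomorphism of (1) to get that $E$ is local, and then use that $E$ is a finite-dimensional local algebra over the algebraically closed field $\k$, so $E=\k\cdot(\mathrm{id}_F,\mathrm{id}_G)\oplus\mathrm{rad}(E)$ with $\mathrm{rad}(E)$ a nilpotent ideal; since multiplication in $E$ is componentwise, nilpotence of $(\rho_1,\rho_2)$ gives nilpotence of $\rho_1$ and $\rho_2$ separately. The paper instead develops an explicit Jordan--Chevalley decomposition $\alpha=\alpha_s+\alpha_n$ for graded endomorphisms of free modules and identifies $\mathrm{rad}(E)$ with the pairs whose semisimple parts vanish; your route reaches the same conclusion with less machinery, and I see no problem with it, provided the isomorphism of (1) has been fully established.
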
 

\begin{proof}
(1) Assume that $(\varphi,\psi)$ is reduced. Let $\pi:G\to M$ denote the canonical quotient map. Given $(\alpha,\beta)\in E$, we have $\pi\beta\varphi=\pi\varphi\alpha=0$ since $\im\varphi = \ker\pi$. Thus $\pi\beta$ induces a well-defined graded endomorphism of $M$ denoted $\coker\beta$, and we have
a map $$\End(\varphi,\psi)\to \End_{R}(M), \quad \text{given by} ~~ (\alpha,\beta)\mapsto \coker\beta.$$ 
It is straightforward to check that this map is a ring homomorphism. We claim it is surjective. If $\Phi:M\to M$ is a graded endomorphism, then since $G$ is graded projective, there exists a graded module map $\beta:G\to G$ such that $\pi\beta=\Phi\pi$. Moreover, $\pi\beta\varphi=\Phi\pi\varphi=0$ so $\im\beta\varphi\subset \im\varphi$. Thus by the graded projectivity of $F$, there exists a graded module map $\alpha:F\to F$ such that $\varphi\alpha=\beta\varphi$. Hence $(\alpha,\beta)\in E$. Since $\pi\beta=\Phi\pi$, $\coker\beta=\Phi$ and the map of endomorphism rings is a surjective ring homomorphism. 
A graded endomorphism $(\alpha,\beta)$ is in the kernel of this homomorphism if and only if $\pi\beta=0$, or equivalently, $\im\beta\subset \im\varphi=\ker\pi$. Since $(\varphi,\psi)$ is reduced, $\im\varphi\subset R_+G$, where $R_+$ is the augmentation ideal of $R$. Since $\beta$ is a degree 0 homomorphism, $(\alpha,\beta)$ is in the kernel if and only if $\beta=0$. This implies $\im\alpha\subset \ker\varphi=0$; so, $\alpha=0$ as well. This proves (1).

\medskip

As a brief aside, we remark that any graded homomorphism from a finite rank graded free module to itself has  a Jordan-Chevalley decomposition. Let $F$ be graded free of rank $r$ and let $\alpha:F\to F$ be a graded homomorphism. Choose a homogeneous basis for $F$ and write $F=R[d_1]^{n_1}\oplus\cdots\oplus R[d_m]^{n_m}$ where $d_1 < \cdots < d_m$. Let $\pi_i: F \rightarrow R[d_i]^{n_i}$ denote the projection map.  For each $1\le i\le m$, change the basis of $R[d_i]^{n_i}$ so the matrix of 
$\pi_i \alpha|_{R[d_i]^{n_i]}}: R[d_i]^{n_i} \rightarrow R[d_i]^{n_i}$ with respect to the new basis is in Jordan normal form.
Since $\alpha$ is a degree 0 homomorphism, the matrix $\mathcal A$ of $\alpha$ is upper triangular. We may therefore write $$\alpha=\alpha_s+\alpha_n$$ where $\alpha_s$ is the map given by the diagonal part of $\mathcal A$ and $\alpha_n$ is the map given by the strictly upper-triangular (nilpotent) part of $\mathcal A$.

\medskip

(2) Resuming the proof, assume further that $(\varphi,\psi)$ is indecomposable. Then $M$ is indecomposable, and hence $E$ is local by Proposition~\ref{prop:local}. 

Let $(\alpha,\beta)\in E$. Since $E$ is local and $(\alpha_n,\beta_n)$ is not a unit, $(\alpha_n,\beta_n)\in {\rm rad}(E)$. Thus if $(\alpha,\beta)\in {\rm rad}(E)$, we must have $(\alpha_s,\beta_s)\in {\rm rad}(E)$. This implies $$({\rm id}_F-\gamma\alpha_s,~ {\rm id}_G-\gamma\beta_s)$$ is a unit for all $\gamma\in \k$. Hence $\alpha$ has no nonzero eigenvalues and $(\alpha_s,\beta_s)=(0,0)$. This proves 
$${\rm rad}(E)=\{(\alpha,\beta)\in E\ |\ (\alpha_s,\beta_s)=(0,0)\}.$$

Now suppose $(\alpha,\beta)\in E$ is a unit. Since $\k$ is algebraically closed and $E$ is finite dimensional, $E/{\rm rad}(E)\cong \k$. (The base field itself is the only finite-dimensional division algebra over an algebraically closed field.) Since the diagonal part of $\alpha$ cannot be modified by elements of ${\rm rad}(E)$, we have $(\alpha_s,\beta_s)=c({\rm id}_F,{\rm id}_G)$ for some nonzero scalar $c\in \k^{\times}$. 
\end{proof}

We end this section with a discussion of the {\it symmetric} property of twisted matrix factorizations. But first we need to introduce the following standing hypothesis and notation.

\begin{hypothesis}[$\sqrt{\s}$, $\tau$, $\ell$]
\label{hyp:db}
We assume that there exists a graded algebra automorphism $\sqrt{\s}$ of $R$ such that $(\sqrt{\s})^2 = \s$. We also assume that 
\begin{itemize}
\item the degree $d$ of $f$ is even, and 
\item $\sqrt{\s}(f)=f$.
\end{itemize}
(Without these assumptions the element $f+z^2$, that we analyze later in the paper, will not be normal.) Moreover, denote the functor ${^{\sqrt{\s}}}(-)[-\ell]$ by ${^{\t}}(-)$, for $\ell:=d/2$. Thus, $${^{\t^2}}(-)={^{\tw}}(-).$$
\end{hypothesis}

\begin{defn}[$T$] \label{def:sym}
Define the endofunctor of $TMF(f)$ as follows:
$$T:TMF(f)\to TMF(f), \quad (\varphi,\psi) \mapsto {^{\t^{-1}}}(\psi,{^{\tw}\varphi})=({^{\t^{-1}}}\psi,{^{\t}}\varphi).$$ (Then, $T^2(\varphi,\psi)=(\varphi,\psi)$ and hence $T^{2}(-)$ is the identity functor on $TMF(f)$.) If $(\varphi, \psi)\cong T(\varphi,\psi)$, we call the twisted matrix factorization $(\varphi,\psi)$ of $f$ \emph{symmetric}.  Otherwise, we call $(\varphi,\psi)$ \emph{asymmetric}. 
\end{defn}

Indecomposable symmetric factorizations have the following important characterization.

\begin{prop}
\label{symmetricRoot}
Let $(\varphi,\psi)\in TMF(f)$ be symmetric and indecomposable. Then, $(\varphi,\psi)$ is isomorphic to a twisted matrix factorization of the form $(\varphi_0,~{^{\t}}\varphi_0)$ where $\varphi_0:F\to {^{\t^{-1}}}F$ satisfies $(\varphi_0)({^{\t}}\varphi_0)=(\lambda_f)^{({^{\t^{-1}}}F)}$.  
\end{prop}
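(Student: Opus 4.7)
The plan is to exploit the involution $T^2=\mathrm{id}$ together with Lemma~\ref{nilpotentDecomp}(2) via a square-root trick on endomorphisms. First I would observe that an indecomposable symmetric $(\varphi,\psi)$ must in fact be reduced, because the only non-reduced indecomposable factorizations are the rank-one trivial ones, and a direct calculation shows that $T$ interchanges the two trivial types $(1_F,\lambda_f^F)$ and $(\lambda_f^{F'},1_{{^{\tw}}F'})$, so neither is self-$T$-dual (their cokernels differ). Thus Lemma~\ref{nilpotentDecomp}(2) applies: $E:=\End(\varphi,\psi)$ is local, and every unit of $E$ has the form $c\cdot(\mathrm{id}_F,\mathrm{id}_G)+(\rho_1,\rho_2)$ with $c\in\k^\times$ and $\rho_i$ nilpotent.

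Next, pick any isomorphism $(\gamma,\delta)\colon (\varphi,\psi)\to T(\varphi,\psi)$, so $\gamma\colon F\to {^{\t}}G$ and $\delta\colon G\to {^{\t^{-1}}}F$, and form the unit $u:=T(\gamma,\delta)\circ(\gamma,\delta)\in E^\times$; the crux is to rescale $(\gamma,\delta)$ so that this composite becomes the identity. Define the $\k$-algebra isomorphism $\Phi\colon E\to E$ by $\Phi(w)=(\gamma,\delta)^{-1}\circ T(w)\circ(\gamma,\delta)$. Functoriality of $T$ combined with $T^2=\mathrm{id}$ gives the key identity $\Phi(u)=u$. Writing $u=c\cdot 1_E+\rho$ with $\rho$ nilpotent, the binomial expansion of $u^{-1/2}$ truncates to produce $u^{-1/2}\in E^\times$ as a $\k$-polynomial in $u$; consequently $\Phi(u^{-1/2})=u^{-1/2}$. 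Setting $(\alpha,\beta):=(\gamma,\delta)\circ u^{-1/2}$, a short computation yields $T(\alpha,\beta)\circ(\alpha,\beta)=u\cdot\Phi(u^{-1/2})\cdot u^{-1/2}=u\cdot u^{-1}=\mathrm{id}_{(\varphi,\psi)}$.

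Componentwise, this last identity reads $({^{\t}}\beta)\alpha=\mathrm{id}_F$ and $({^{\t^{-1}}}\alpha)\beta=\mathrm{id}_G$, which force $\beta={^{\t^{-1}}}(\alpha^{-1})$ and hence $({^{\tw}}\beta)^{-1}={^{\t}}\alpha$. Define $\varphi_0:=\beta\varphi\colon F\to {^{\t^{-1}}}F$. The first-slot compatibility of the morphism $(\alpha,\beta)\colon (\varphi,\psi)\to T(\varphi,\psi)$ reads $\varphi_0=({^{\t^{-1}}}\psi)\alpha$, and applying ${^{\t}}(-)$ then gives ${^{\t}}\varphi_0=\psi\circ {^{\t}}\alpha=\psi\circ({^{\tw}}\beta)^{-1}$. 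One then checks that $(1_F,\beta)\colon (\varphi,\psi)\to (\varphi_0,{^{\t}}\varphi_0)$ is an isomorphism of twisted matrix factorizations, and the required relation $\varphi_0\circ {^{\t}}\varphi_0=\lambda_f^{{^{\t^{-1}}}F}$ follows from $\varphi\psi=\lambda_f^G$ together with the normality of $f$ (or equivalently by transport of structure along this iso).

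The main obstacle is producing the corrected pair $(\alpha,\beta)$ with $T(\alpha,\beta)\circ(\alpha,\beta)=\mathrm{id}$: the two non-formal ingredients are the identity $\Phi(u)=u$ (a short functoriality calculation using $T^2=\mathrm{id}$) and the existence of $u^{-1/2}\in E$ fixed by $\Phi$ (which requires the locality of $E$ from Lemma~\ref{nilpotentDecomp}(2)). Once $(\alpha,\beta)$ is in hand, the rest is diagram chasing. The entire argument is the categorical analogue of diagonalizing a nondegenerate symmetric bilinear form by a change of basis, with $T$ playing the role of the involution and $u^{-1/2}$ providing the necessary scaling.
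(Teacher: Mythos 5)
Your argument is correct and is essentially the paper's proof in a slightly more categorical packaging: composing an isomorphism $(\gamma,\delta)\colon(\varphi,\psi)\to T(\varphi,\psi)$ with its $T$-image to form a unit $u$ and then correcting by $u^{-1/2}$ (which exists and is fixed by your conjugation map $\Phi$ because $u$ is a nonzero scalar plus a nilpotent) is exactly what the paper does componentwise via $(X,Y)=({^{\t}}\beta\alpha,\,{^{\t^{-1}}}\alpha\beta)$ and the intertwining relations $\alpha\rho_1={^{\t}}\rho_2\alpha$ and $\beta\rho_2={^{\t^{-1}}}\rho_1\beta$, which your identity $\Phi(u)=u$ encodes, and the final construction of $\varphi_0=\beta\varphi$ and its verification coincide with the paper's. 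Your preliminary observation that a symmetric indecomposable must be reduced (so that Lemma~\ref{nilpotentDecomp}(2) applies) is a point the paper leaves implicit and is a worthwhile addition.
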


\begin{proof}
Let $\alpha, \beta$ be graded isomorphisms such that the diagram

\[
\xymatrix{
 F\ar@{->}[rrr]^-{\varphi}\ar@{->}[d]_{\alpha} &&& G\ar@{->}[d]^{\beta}\\
 {^{\t}G} \ar@{->}[rrr]^-{^{\t^{-1}}\psi} &&& {^{\t^{-1}}F}
}
\]

\medskip

\noindent commutes. Recall $\psi: {}^{\tau^2}G \to F$, so indeed ${}^{\tau^{-1}}\psi: {}^\tau G \to {}^{\tau^{-1}}F$. Put $$X := {^{\t}}\beta\alpha \quad \text{ and } \quad Y:={^{\t^{-1}}}\alpha\beta.$$ 
Then $(X, Y)$ is an automorphism of $(\varphi,\psi)$. By Lemma~\ref{nilpotentDecomp} we may assume (rescaling if necessary, as $\k$ is algebraically closed) that
$$(X, Y)=({\rm id}_F,{\rm id}_G)+(\rho_1,\rho_2)$$
where $\rho_1$ and $\rho_2$ are nilpotent automorphisms of $F$ and $G$, respectively. 

Since $\rho_1=X-{\rm id}_F$ and $\rho_2=Y-{\rm id}_G$, we have  
$$\alpha\rho_1 =  {^{\t}}\rho_2\alpha, \quad \quad
\beta\rho_2= {^{\t^{-1}}}\rho_1\beta, \quad \quad \text{ and} \quad
\varphi\rho_1=\rho_2\varphi.$$

Since $\rho_1$ and $\rho_2$ are nilpotent, we use the Taylor series for $(1+x)^{-1/2}$ to define $({\rm id}_F+\rho_1)^{-1/2}$ and $({\rm id}_G+\rho_2)^{-1/2}$. Then define
\begin{align*}
\alpha':F\to {^{\t}G}\quad&\text{ by }\quad\alpha'=\alpha\circ({\rm id}_F+\rho_1)^{-1/2}\quad\text{ and}\\
\beta':G\to {^{\t^{-1}}}F\quad&\text{ by }\quad\beta'=\beta\circ({\rm id}_G+\rho_2)^{-1/2}. 
\end{align*}
The equations above imply
\begin{align*}
\alpha'&=\alpha\circ({\rm id}_F+\rho_1)^{-1/2}={^{\t}}({\rm id}_G+\rho_2)^{-1/2}\circ\alpha\quad\text{ and}\\
\beta'&=\beta\circ({\rm id}_G+\rho_2)^{-1/2}={^{\t^{-1}}}({\rm id}_F+\rho_1)^{-1/2}\circ\beta.
\end{align*}
Now since $({}^\tau \beta) ({}^{\tau}(Y^{-1})) = (X^{-1})({}^\tau \beta)$, we obtain that 
$$({^{\t}}\b')\a' = {^{\t}\b}({^{\t}}({\rm id}_G+\rho_2)^{-1})\a = ({\rm id}_F+\rho_1)^{-1}X = {\rm id}_F.$$
Similarly, $({^{\t^{-1}}}\a')\b' = {\rm id}_G$.

Now, put $\varphi_0=\beta'\varphi$. By the above, we have
\begin{align*}
  \varphi_0&=\beta({\rm id}_G+\rho_2)^{-1/2}\varphi ~=~\beta\varphi({\rm id}_F+\rho_1)^{-1/2} \\
  &=({^{\t^{-1}}}\psi)\a({\rm id}_F+\rho_1)^{-1/2}~=~({^{\t^{-1}}}\psi)\a'.
\end{align*}
We calculate
$$\varphi_0({^{\t}}\varphi_0)~=~\b'\varphi\psi({^{\t}}\a') ~=~ \b'\lambda_f^G({^{\t}}\a') ~= ~(\lambda_f)^{({^{\t^{-1}}}F)} \b'({^{\t}}\a') ~= ~(\lambda_f)^{({^{\t^{-1}}}F)}.$$
Applying ${^{\t}}(-)$ to this gives
$$({^{\t}}\varphi_0)({^{\tw}}\varphi_0)~~=~~\lambda_f^{F}.$$
This shows $(\varphi_0,{^{\t}}\varphi_0)$ is a graded matrix factorization of $f$. 

Finally, 
$$({^{\t}}\varphi_0)({^{\tw}}\beta')~~=~~\psi({^{\t}}\alpha')({^{\tw}}\beta')~~=~~\psi$$
and it follows that $({\rm id}_F,\beta')$ is an isomorphism $(\varphi,\psi)\to (\varphi_0,{^{\t}}\varphi_0)$.
\end{proof}


\section{The double branched cover in a noncommutative setting} \label{sec:double}
The goal of this section is to define and study the {\it double branched cover $B^\#$} of a noncommutative hypersurface $B=A/(f)$; recall Notation~\ref{not:Rf} and see Definition-Notation~\ref{def:dbc}. We will compare MCM $B$-modules with those of $B^\#$ by investigating the corresponding categories of twisted matrix factorizations; see Theorem~\ref{thm:mcm-tmf} and Figure~\ref{fig:comzeta}. We will also provide a characterization of symmetric twisted matrix factorizations for the double branched cover in Theorem~\ref{imageOfC}.

\begin{defnotn}($S^\#$, $\z$, $S^\#[\z]$, $N^{\circ}$) 
\label{def:dbc} 
Consider the following notation and terminology. Recall from Notation~\ref{not:Rf} that $f\in R_d$ is a normal, regular, homogeneous element of $R$ with degree $d= 2\ell$ and $S = R/(f)$.
\begin{enumerate} 
\item Let $S^{\#}=R[z;\sqrt{\s}]/(f+z^2)$ and we refer to this as the {\it double branched cover} of $S$. The algebra $S^{\#}$ is graded by taking $\deg z=\ell$.  

\smallskip

\item The graded algebra $R[z;\sqrt{\s}]$ admits a graded automorphism given by 
$$\z|_R={\rm id}_R \quad \text{ and } \quad \z(z)=-z$$ 
which induces a graded automorphism of $S^{\#}$ (also denoted $\z$). The automorphism $\z$ generates an order 2 subgroup $\la \z\ra\subset {\rm Aut}(S^{\#})$. For notation's sake we denote the skew group ring by $S^{\#}[\z]$.

\smallskip

\item If $N$ is an $S^{\#}[\z]$-module, let $N^{\circ}$ denote the $S^{\#}$-module obtained by forgetting the action of $\z$. 
\end{enumerate}
\end{defnotn}

\begin{defn}($\theta= \theta_\z$, $\End_{\z}(M)$)
If $M$ is a graded $S^{\#}$-module, we say a graded $\k$-linear endomorphism $\theta:= \theta_{\z}: M\to M$ is \emph{$\z$-compatible} if $\theta(bm)=\z(b)\theta(m)$ for all $b\in S^{\#}$, $m\in M$  and $\theta^2={\rm id}_M$. 
(This is equivalent to saying $\theta$ is a graded left $S{^\#}$-module homomorphism $M\to {^{\z}M}$ such that ${^{\z}\theta}\theta=1_M$.) 

We denote the set of $\z$-compatible graded $\k$-endomorphisms of $M$ by $\End_{\z}(M)$.
\end{defn}

Note that the free $S^{\#}$-module $M=S^{\#}$ admits (at least) two $\z$-compatible graded $\k$-endomorphisms: $\theta=\z$ and $\theta=-\z$. 

\begin{lemma}
There is a bijective correspondence between graded $S^{\#}[\z]$-modules and pairs $(M,\theta)$ where $M$ is a graded $S^{\#}$-module and $\theta=\theta_\z \in\End_{\z}(M)$.
\end{lemma}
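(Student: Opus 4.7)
The plan is to write down the obvious assignments in both directions and check that the two conditions in the definition of $\z$-compatibility are exactly the two relations that distinguish the skew group ring $S^{\#}[\z]$ from the plain tensor product $S^{\#}\tsr_{\k}\k\la\z\ra$.

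In one direction, given a graded $S^{\#}[\z]$-module $N$, I would take $M:=N^\circ$ and define $\theta_\z:M\to M$ by $\theta_\z(n)=\z\cdot n$. Because $\z$ has degree $0$ in $S^{\#}[\z]$, this is a graded $\k$-linear endomorphism of $M$. The skew group ring relation $\z\, b=\z(b)\,\z$ gives
$$\theta_\z(bm)=\z\cdot(bm)=(\z\, b)\cdot m=\z(b)\,\z\cdot m=\z(b)\,\theta_\z(m),$$
and the relation $\z^2=1$ gives $\theta_\z^2={\rm id}_M$, so $(M,\theta_\z)$ is a valid pair.

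In the other direction, given a pair $(M,\theta)$ as in the statement, I would extend the $S^{\#}$-action on $M$ to an $S^{\#}[\z]$-action by declaring $\z\cdot m:=\theta(m)$ and extending $\k$-linearly. Well-definedness amounts to compatibility with the defining relations of the skew group ring: the identity $\theta(bm)=\z(b)\theta(m)$ matches $\z\, b=\z(b)\,\z$, and $\theta^2={\rm id}_M$ matches $\z^2=1$. Thus $M$ acquires a well-defined graded $S^{\#}[\z]$-module structure.

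Finally I would observe that these two constructions are mutually inverse: starting from $N$, the $S^{\#}$-action on $N^\circ$ together with the rule $\z\cdot n=\theta_\z(n)$ reassembles the original $S^{\#}[\z]$-action; starting from $(M,\theta)$, forgetting the $\z$-action recovers $M$ as an $S^{\#}$-module, and the induced $\theta_\z(m)=\z\cdot m$ is $\theta$ by construction. There is no real obstacle here, as the content of the lemma is entirely formal; the only thing worth being careful about is ensuring that $\theta$ is taken to be graded and $\k$-linear of degree $0$, which is precisely what makes the translation between $S^{\#}[\z]$-module structures and pairs $(M,\theta)$ work within $S^{\#}{\rm GrMod}$.
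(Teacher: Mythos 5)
Your proposal is correct and follows exactly the paper's argument: define $\theta$ as the action of $\z$ in one direction, and extend the $S^{\#}$-action by $\z\cdot m=\theta(m)$ in the other, with the two defining properties of $\z$-compatibility matching the skew group ring relations. You simply spell out the routine verifications that the paper leaves implicit.
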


\begin{proof}
If $N$ is a graded $S^{\#}[\z]$-module, define $\theta:N\to N$ by $\theta(n)=\z n$. Then $(N^{\circ},\theta)$ is the desired pair. Conversely, given a pair $(M,\theta_\z)$, one can construct a graded $S^\#$-module $M$ via $(b\tsr\z)\cdot m=b\theta_\z(m)$. 
\end{proof}

\begin{defn}($MCM_{\z}(S^{\#})$) We say a graded $S^{\#}[\z]$-module $N$ is \emph{(graded) maximal Cohen-Macaulay} if $N^{\circ}$ is a graded MCM $S^{\#}$-module.  We denote the category of graded MCM $S^{\#}[\z]$-modules by $MCM_{\z}(S^{\#})$.
\end{defn}

In light of the preceding Lemma, it is often useful to describe an object of $MCM_{\z}(S^{\#})$ in terms of a pair $(M,\theta)$ where $M$ is a graded $S^{\#}$ module and $\theta\in \End_{\z}(M)$.

\begin{notn}($N^+$, $N^-$) Since $\z$ generates an order 2 cyclic subgroup of ${\rm Aut}(S^{\#})$, a graded $S^{\#}[\z]$-module $N$ has two weight $\k[\la \z\ra]$-submodules, corresponding to the trivial and sign representations of $\la \z\ra$. We denote these graded submodules $N^+$ and $N^-$, respectively. 
\end{notn}

Then, as modules over the fixed ring $(S^\#)^{\la \z \ra}=S$ we have $N^{\circ}=N^+\oplus N^-$. (Namely, use the graded Reynolds operator; every  $n\in N$ can be written $\frac{1}{2}[(n+\zeta n)+(n-\zeta n)]$. The first summand is invariant and the second is anti-invariant.) 

\smallskip

In the context of AS-regular algebras, these weight modules  are graded free. We record this fact as a corollary of the following general observation.

\begin{lemma}
\label{MCMtransfer}
A graded $B^{\#}$-module is graded MCM if and only if it is a graded free $A$-module.
\end{lemma}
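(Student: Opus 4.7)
The plan is to reduce to a projective-dimension computation over the Ore extension $C := A[z;\sqrt{\sigma}]$ and then apply the standard Ore resolution to transfer information between $C$-modules and $A$-modules. Since $C$ is a noetherian AS-regular Ore extension of $A$ and $f+z^2$ is a normal regular element of $C$, Proposition~\ref{prop:MCMcheck} applied to the hypersurface $B^\# = C/(f+z^2)$ shows that a finitely generated graded left $B^\#$-module $N$ is MCM if and only if $\text{pd}_C(N) \le 1$. Thus the task reduces to showing that, for a graded $B^\#$-module $N$, $\text{pd}_C(N) \le 1$ is equivalent to $N$ being graded free over $A$.

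The main tool is the standard Ore-extension short exact sequence: setting $N' := {}^{\sqrt{\sigma}^{-1}}N[-\ell]$, for any graded $C$-module $N$ one verifies directly, using the left $A$-basis $\{z^i\}_{i \ge 0}$ of $C$, that the sequence
\[
0 \to C \otimes_A N' \xrightarrow{\phi} C \otimes_A N \xrightarrow{\epsilon} N \to 0,
\]
with $\epsilon(c\otimes n) = cn$ and $\phi(c\otimes n) = cz \otimes n - c \otimes zn$, is exact. If $N$ is graded free over $A$, both $C \otimes_A N$ and $C \otimes_A N'$ are graded free over $C$, so the sequence is a $C$-free resolution of $N$ of length $\le 1$, giving $\text{pd}_C(N) \le 1$ immediately.

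Conversely, assume $\text{pd}_C(N) \le 1$. I apply $\Tor^C_*(\k,-)$ to the Ore resolution and invoke the base-change identity $\Tor^C_i(\k,\, C\otimes_A M) \cong \Tor^A_i(\k, M)$, which holds because $C$ is flat over $A$. The long exact sequence of Tor, together with the vanishing $\Tor^C_2(\k,N) = 0$ coming from the hypothesis, produces an injection $\Tor^A_1(\k, N') \hookrightarrow \Tor^A_1(\k, N)$ of graded $\k$-vector spaces. Since twisting a module by a graded automorphism of $A$ does not alter $\Tor$ against $\k$, the left-hand side equals $\Tor^A_1(\k, N)[-\ell]$ as a graded vector space.

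Writing $h(t)$ for the Hilbert series of $\Tor^A_1(\k, N)$---a Laurent polynomial in $t$ because $N$ is finitely generated over the noetherian connected algebra $A$---the injection yields $t^\ell h(t) \le h(t)$ coefficient-wise, which forces $h(t) = 0$ since $\ell > 0$. Hence $\Tor^A_1(\k, N) = 0$, so $N$ is flat, and therefore graded free, over $A$ by graded Nakayama. The main obstacle I foresee is verifying the Ore-extension resolution with the correct twist and degree shift; once this is in place, the rest is routine homological algebra.
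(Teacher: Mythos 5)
Your proof is correct, but it takes a genuinely different route from the paper's. The paper argues via depth: it applies the change-of-rings spectral sequence for the free extension $A\to B^{\#}=A\oplus A[-\ell]$ to get ${\rm depth}_A(N)={\rm depth}_{B^{\#}}(N)$, invokes the splitting-subring results of \cite{CKWZ2} to match the depths of the rings themselves, and then uses the graded Auslander--Buchsbaum formula to convert ``MCM over $B^{\#}$'' into ``MCM over $A$,'' which equals ``free over $A$'' by AS-regularity. You instead go \emph{up} to the Ore extension $C=A[z;\sqrt{\s}]$, use the hypersurface criterion (Proposition~\ref{prop:MCMcheck}) to translate MCM into ${\rm pd}_C(N)\le 1$, and then transfer to $A$ via the standard Ore resolution, Tor base change, and a Hilbert-series argument; this avoids depth and the Auslander--Buchsbaum formula entirely and is arguably more self-contained, at the cost of explicitly needing that $C$ is noetherian AS-regular and that $f+z^2$ is regular (facts the paper also relies on implicitly). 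Two small points to tidy up: first, with the paper's convention $az=z\sqrt{\s}(a)$ the balancing computation forces the twist in the Ore sequence to be $N'={}^{\sqrt{\s}}N[-\ell]$ rather than ${}^{\sqrt{\s}^{-1}}N[-\ell]$ --- you flagged this yourself, and it is harmless since either way $N'$ is free over $A$ exactly when $N$ is and $\Tor^A_1(\k,N')$ is a shift of $\Tor^A_1(\k,N)$; second, Proposition~\ref{prop:MCMcheck} characterizes MCM by ${\rm pd}_C(N)=1$, so in the forward direction you should add the one-line observation that a nonzero $B^{\#}$-module is never $C$-projective (as $f+z^2$ is regular on free $C$-modules), whence ${\rm pd}_C(N)\le 1$ and ${\rm pd}_C(N)=1$ coincide for $N\neq 0$.
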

 
\begin{proof}
Let $N$ be a graded $B^{\#}$-module.
We apply the (graded) change-of-rings spectral sequence for the inclusion $A\to B^{\#}$ 
$$\Ext^p_{B^{\#}}(\k, \Ext_A^q(B^{\#},N)) \Rightarrow \Ext_A^{p+q}(\k,N).$$
Since $B^{\#}=A\oplus Az\cong A\oplus A[-\ell]$ is a free $A$-module, 
the spectral sequence collapses, yielding
$$\Ext^p_{B^{\#}}(\k, N\oplus N[\ell]) \cong \Ext_A^{p}(\k,N).$$
It follows that ${\rm depth}_A(N)={\rm depth}_{B^{\#}}(N)$.

Note that $A$ is isomorphic to a splitting subring of $B^{\#}$ in the sense of \cite[Definition 4.1]{CKWZ2}. Since $A$ and $B^{\#}$ are AS-Gorenstein, \cite[Theorem~3.8(7) and Lemma~4.3]{CKWZ2} imply ${\rm depth}_A(A)={\rm depth}_{B^{\#}}(B^{\#})$. 
Now it follows from the graded Auslander-Buchsbaum formula \cite[Theorem 3.2]{Jorg2} that $N$ is graded MCM over $B^{\#}$ if and only if it is graded MCM over $A$.  Since every graded MCM $A$-module is free (see Remark~\ref{rmk:MCMdef}), the result follows.
\end{proof}

\begin{cor}
If $N$ is a graded MCM $B^{\#}[\z]$-module, then $N^+$ and $N^-$ are  graded free $A$-modules of finite rank. \qed
\end{cor}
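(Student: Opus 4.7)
The plan is to derive this corollary essentially as a direct consequence of Lemma \ref{MCMtransfer}, together with the fact that the automorphism $\z$ restricts to the identity on the subalgebra $A\subset B^{\#}$. To begin, since $N$ is by assumption a graded MCM $B^{\#}[\z]$-module, the underlying $B^{\#}$-module $N^{\circ}$ is graded MCM over $B^{\#}$, and Lemma \ref{MCMtransfer} then gives that $N^{\circ}$ is a finitely generated graded free $A$-module.

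Next I would observe that the weight decomposition $N^{\circ}=N^{+}\oplus N^{-}$, which is produced on the level of vector spaces by the graded Reynolds operator (valid since $2\in \k^{\times}$ and $\z$ is a graded automorphism of order $2$), is in fact a decomposition in $A{\rm grmod}$. Indeed, because $\z|_{A}=\mathrm{id}_{A}$ by Definition-Notation \ref{def:dbc}(2), for any $a\in A$ and $n\in N^{\pm}$ we have $\z(an)=\z(a)\z(n)=\pm an$, so $N^{\pm}$ is a graded $A$-submodule of $N^{\circ}$. Thus $N^{+}$ and $N^{-}$ are finitely generated graded $A$-module direct summands of a finitely generated graded free $A$-module, and hence are finitely generated graded projective $A$-modules.

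Finally, I would close the argument by invoking that over the connected graded noetherian AS-regular algebra $A$, every finitely generated graded projective module is in fact graded free. One concrete way to see this in the present setup is to note that direct summands of free modules satisfy $\mathrm{Ext}_{A}^{i}(N^{\pm},A)=0$ for all $i\neq 0$ (since the Ext of a direct sum splits), so $N^{\pm}$ is graded MCM over $A$, and Remark \ref{rmk:MCMdef} (equivalently \cite[Lemma 3.13]{CKWZ2}) forces it to be graded free of finite rank. There is no real obstacle here: all the substantive work has been absorbed into Lemma \ref{MCMtransfer}, and the only thing to verify is that the isotypic decomposition is compatible with the $A$-action, which is immediate from $\z|_{A}=\mathrm{id}_{A}$.
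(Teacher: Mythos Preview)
Your proposal is correct and follows essentially the same approach the paper has in mind: the corollary is marked with \qed\ immediately after Lemma~\ref{MCMtransfer}, indicating the authors view it as a direct consequence, and your argument---$N^{\circ}$ is graded free over $A$ by the lemma, the weight decomposition is $A$-linear because $\z|_A=\mathrm{id}_A$, and graded projective summands of a graded free module over a connected graded noetherian algebra are graded free---supplies exactly the details left implicit. There is nothing to add.
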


This hints at a connection between the categories $MCM_{\z}(B^{\#})$ and $TMF(f)$. In fact we will prove below these categories are equivalent; see Theorem~\ref{thm:mcm-tmf}. To begin, we construct functors establishing the equivalence. We remind the reader that objects of $MCM_{\z}(B^{\#})$ can be viewed as pairs $(M,\theta)$ where $M$ is a graded $B^{\#}$ module and $\theta\in\End_{\z}(M)$.

\begin{lemma}[$\mathscr A$, $\mathscr B$] The following are well-defined functors between the categories $MCM_{\z}(B^{\#})$ and $TMF(f)$:
\begin{eqnarray*}
\mathscr A: MCM_{\z}(B^{\#}) & \to & TMF(f)\\
 N & \mapsto & (\varphi, \psi),\\
(\xi:M\to N) & \mapsto & (\xi|_{M^+}, {^{\t^{-1}}\xi|}_{M^-})
\end{eqnarray*}
where $\varphi:N^+\to {^{\t^{-1}}N^-}$ and $\psi:{^{\t}N^-}\to N^+$ are graded $A$-linear homomorphisms given by multiplication by $z$ and $-z$, respectively; and
\begin{eqnarray*}
\mathscr B: TMF(f) & \to & MCM_{\z}(B^{\#})   \\
(\varphi:F\to G, \psi:{^{\tw}G}\to F) & \mapsto & (F\oplus {^{\t}G}, \theta)\\
\left[(\alpha,\beta):(\varphi,\psi) \to (\varphi',\psi')\right] & \mapsto & \alpha\oplus{^{\t}\beta},
\end{eqnarray*}
where $\theta:F\oplus {^{\t}G}\rightarrow F\oplus {^{\t}G},\;\; \theta(x,y) = (x,-y) \text{  and } z(x,y) = (-\varphi(y),\varphi(x)).$
\end{lemma}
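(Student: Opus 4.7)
The plan is to verify well-definedness of $\mathscr{A}$ and $\mathscr{B}$ as two parallel bookkeeping exercises, with the skew-commutation relation $za=\sqrt{\sigma}(a)z$ in $A[z;\sqrt{\sigma}]$ and the relation $z^2=-f$ in $B^\#$ providing the algebraic content on each side.

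For $\mathscr{A}$, I would start from $N\in MCM_\zeta(B^\#)$ and form the weight decomposition $N^\circ = N^+ \oplus N^-$ of $A$-modules; by the preceding Corollary, both summands are graded free of finite rank over $A$. Because $\zeta(z)=-z$, the commutation $\zeta z = -z\zeta$ in $B^\#[\zeta]$ forces left multiplication by $z$ to swap the $\pm$ summands. Using $za=\sqrt{\sigma}(a)z$ together with $\deg z = \ell$, I would check that left multiplication by $z$ defines a graded $A$-linear map $\varphi:N^+\to {^{\tau^{-1}}}N^-$ and that multiplication by $-z$ defines a graded $A$-linear map $\psi:{^{\tau}}N^-\to N^+$ (the sign being chosen so that the compositions correspond to $-z^2=f$). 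The identities $\varphi\psi=\lambda_f^{N^+}$ and $\psi\,{^{\tau^2}}\varphi=\lambda_f^{N^-}$ then follow from $z^2=-f$ on $N$ together with $\tau^2 = \tw$. On morphisms, any $\xi:M\to N$ in $MCM_\zeta(B^\#)$ commutes with $\zeta$, hence restricts to maps $M^\pm\to N^\pm$; commutation with multiplication by $z$ is exactly the morphism condition $\varphi'\alpha=\beta\varphi$ of Definition~\ref{def:TMF}(2).

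For $\mathscr{B}$, given $(\varphi,\psi)\in TMF(f)$, I would set $N:=F\oplus {^{\tau}}G$ as a graded $A$-module and prescribe the $z$-action exactly as in the statement (with the sign so that $z^2$ acts by $-f$, using $\varphi\psi=\lambda_f^G$ and $\psi\,{^{\tau^2}}\varphi=\lambda_f^F$ under the identification $\tau^2=\tw$). The main points to verify are: (i) the $z$-action satisfies the Ore relation $za=\sqrt{\sigma}(a)z$ on $N$, which reduces to $A$-linearity of $\varphi$ and $\psi$ after accounting for the fact that the second summand carries the twist ${^{\tau}}G$; (ii) the involution $\theta(x,y)=(x,-y)$ commutes with the $A$-action and anticommutes with multiplication by $z$, hence $\theta\in\End_\zeta(N)$ and the pair $(N,\theta)$ encodes a graded $B^\#[\zeta]$-module. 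Finally, $N$ is graded free over $A$ since $F$ and ${^{\tau}}G$ are, so Lemma~\ref{MCMtransfer} gives MCM-ness over $B^\#$. Functoriality: a morphism $(\alpha,\beta)$ in $TMF(f)$ yields $\alpha\oplus {^{\tau}}\beta$, and the equation $\varphi'\alpha=\beta\varphi$ translates directly into commutation with the prescribed $z$-action, while commutation with $\theta$ is automatic from the block-diagonal form.

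The bulk of the work is bookkeeping with the twist and the grading shift that together comprise $\tau={^{\sqrt{\sigma}}(-)}[-\ell]$. The genuinely nontrivial point — and the step I expect to dominate the proof — is showing that the domains and codomains produced from left multiplication by $z$ really are the twisted and shifted modules ${^{\tau^{-1}}}N^-$ and ${^{\tau}}N^-$ (and not some other shift), so that the output of $\mathscr{A}$ lands in $TMF_A(f)$ on the nose; the same issue appears dually in verifying the Ore relation for $\mathscr{B}$. Once those identifications are nailed down, the TMF axioms and the $\zeta$-compatibility of $\theta$ reduce cleanly to $z^2=-f$ and the block-swap structure of the $z$-action.
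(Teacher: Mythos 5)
Your proposal is correct and follows essentially the same route as the paper's proof: the Ore relation $za=\sqrt{\sigma}(a)z$ and the identity $z^2=-f$ on $B^\#$-modules do the algebraic work, the weight decomposition $N^\circ=N^+\oplus N^-$ with $z$ swapping the summands gives $\mathscr A$, freeness over $A$ plus Lemma~\ref{MCMtransfer} gives MCM-ness for $\mathscr B$, and the remaining content is the twist/shift bookkeeping you identify. The only caveat is that the $z$-action on $F\oplus{^{\t}G}$ must be $z(x,y)=(-\psi(y),\varphi(x))$ (the statement's displayed formula has a typo with $\varphi(y)$); your verification correctly uses $\psi$ in the first coordinate.
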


\begin{proof}
Regarding $\mathscr A$, observe that since $N$ is a $B^{\#}$-module, we get that $-z^2n=fn$ for all $n\in N$. Hence  $\mathscr A(N)$ is a twisted matrix factorization of $f$ over $A$.

\smallskip 

Moreover, if $\xi:M\to N$ is a graded $B^{\#}[\z]$-module homomorphism, then $\xi(M^+)\subset N^+$ and $\xi(M^-)\subset N^-$ and $\xi$ commutes with multiplication by $\pm z$. Thus $(\xi|_{M^+}, {^{\t^{-1}}\xi|}_{M^-})$ is a morphism $\mathscr A(M)\to\mathscr A(N)$.

\smallskip 
 Since $\varphi$ is left $A$-linear, $\varphi(\sqrt{\s}(a)x)=\sqrt{\s}(a)\varphi(x)$ for $a\in A$, $x\in F$. Likewise, since $\psi$ is left $A$-linear for the twisted action of $G$, $\psi(\s(a)y)=\psi(a\cdot y)=a\psi(y)$ for $a\in A$, $y\in G$. For $x\in F$ and $y\in G$ define
$$z\cdot(x,y)=(-\psi(y), \varphi(x)).$$ 
It follows from the calculations above that
$M = F\oplus {^{\t}G}$ is an $A[z;\sqrt{\s}]$-module.  Indeed, one has: 
\[
\begin{array}{ll}
z\sqrt{\s}(a)\cdot (x,y) &= z\cdot (\sqrt{\s}(a)x, \s(a)y)\\
&=(-\psi(\s(a)y),\varphi(\sqrt{\s}(a)x))=(-a\psi(y), a\cdot \varphi(x))=az\cdot(x,y).
\end{array}
\]
 It is straightforward to check that $f+z^2$ acts as zero so $M$ is a $B^{\#}$-module. To see that this defines a graded $B^{\#}$-module structure on $M$, observe that if $x\in F_j$, then $\varphi(x) \in G_j=G[-\ell]_{j+\ell}$. Moreover,  if $y\in G[-\ell]_j=G[-d]_{j+\ell}$ then $\psi(y)\in F_{j+\ell}$. Since $M$ is a graded free $A$-module,  $M$ is a graded MCM $B^{\#}$-module by Lemma \ref{MCMtransfer}. Finally, $\theta(x,y)=(x,-y)$ is a $\z$-compatible graded endomorphism of $M$, so $\mathscr B(\varphi,\psi) \in MCM_{\z}(B^{\#})$. 

\smallskip
Next, given a morphism $(\alpha,\beta):(\varphi,\psi)\to (\varphi',\psi')$, we have that $\mathscr B(\alpha,\beta)=\alpha\oplus{^{\t}\beta}$ defines a map of graded $A$-modules $\mathscr B(\varphi,\psi)\to\mathscr B(\varphi',\psi')$. The map respects the action of $z$:
$$(\alpha,\beta)(z(x,y))=(-{\alpha\psi}(y),{\beta\varphi}(x))=
(-{\psi'\beta}(y), {\varphi'\alpha}(x))=z(\alpha(x),\beta(y)).$$
Thus $\mathscr B(\alpha,\beta)$ is a morphism of graded $B^{\#}$-modules.
\end{proof}

\begin{thm} \label{thm:mcm-tmf}
The functor $\mathscr A:MCM_{\z}(B^{\#})\to TMF(f)$ is an equivalence of categories with inverse $\mathscr B$.
\end{thm}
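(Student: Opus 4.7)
The plan is to exhibit natural isomorphisms $\mathscr{A}\mathscr{B}\cong\mathrm{id}_{TMF(f)}$ and $\mathscr{B}\mathscr{A}\cong\mathrm{id}_{MCM_\zeta(B^\#)}$ by unpacking both compositions explicitly on objects and morphisms, using the weight decomposition $N^\circ = N^+\oplus N^-$ as the bridge.

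First I would check $\mathscr{A}\mathscr{B}$ on an object $(\varphi\colon F\to G,\,\psi\colon {}^{\tw}G\to F)$ of $TMF(f)$. By construction $\mathscr{B}(\varphi,\psi)$ has underlying $A$-module $F\oplus {}^{\tau}G$ with $\theta(x,y)=(x,-y)$, so the $\langle\zeta\rangle$-weight decomposition is transparent: the $+$ part is $F$ and the $-$ part is ${}^{\tau}G$. Applying $\mathscr{A}$, multiplication by $z$ gives a map $F\to {}^{\tau^{-1}}({}^{\tau}G)=G$ which by the defining formula $z\cdot(x,0)=(0,\varphi(x))$ is exactly $\varphi$; similarly $-z$ recovers $\psi$ on ${}^{\tau}({}^{\tau}G)={}^{\tw}G$. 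So $\mathscr{A}\mathscr{B}$ is the identity on objects, and the analogous restriction of a morphism $(\alpha,\beta)\mapsto \alpha\oplus {}^{\tau}\beta$ to the two weight components recovers $(\alpha,\beta)$.

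For $\mathscr{B}\mathscr{A}$, given $N\in MCM_\zeta(B^\#)$, the graded Reynolds operator yields $N^\circ=N^+\oplus N^-$, and $\mathscr{A}(N)=(\varphi,\psi)$ has $\varphi = z\cdot(-)$ and $\psi = -z\cdot(-)$. Then $\mathscr{B}\mathscr{A}(N) = (N^+\oplus {}^{\tau}({}^{\tau^{-1}}N^-),\theta) = (N^+\oplus N^-,\theta)$. I would take the candidate natural isomorphism $\eta_N\colon\mathscr{B}\mathscr{A}(N)\to N$, $(x,y)\mapsto x+y$. It is clearly a graded $A$-module isomorphism, it intertwines $\theta$ with $\zeta$ since $\zeta$ acts by $\pm 1$ on $N^\pm$, and it intertwines the $z$-actions since for $x\in N^+$, $y\in N^-$ one has $z(x+y) = zy + zx$ (noting $zx\in N^-$ and $zy\in N^+$ because $\zeta(z)=-z$), which matches the recipe $z\cdot(x,y) = (-\psi(y),\varphi(x)) = (zy,zx)$ under $\eta_N$. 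Naturality in $N$ is immediate because any morphism in $MCM_\zeta(B^\#)$ preserves weight components.

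The main bookkeeping obstacle is confirming that the graded $A$-module structures line up: the $A$-action on the submodule $N^-\subset N$ has to agree, after applying ${}^{\tau}(-)$, with the $A$-action that $\mathscr{B}$ places on its second coordinate. This will come down to the commutation relation $rz = z\sqrt{\sigma}(r)$ inherited from $A[z;\sqrt{\sigma}]$, combined with the degree shift $[-\ell]$ built into $\tau = {}^{\sqrt{\sigma}}(-)[-\ell]$, which is precisely tuned to absorb $\deg z = \ell$. Once these identifications are verified, both compositions are naturally isomorphic to the identity and the equivalence follows.
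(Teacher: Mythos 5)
Your proposal is correct and follows essentially the same route as the paper's proof: the same weight decomposition $N^\circ=N^+\oplus N^-$, the same natural isomorphism $(x,y)\mapsto x+y$ for $\mathscr{BA}\cong\mathrm{id}$, the same identification of $(F\oplus{}^{\tau}G)^{\pm}$ for $\mathscr{AB}\cong\mathrm{id}$, and the same checks on morphisms. The bookkeeping you flag about the twist $\tau$ absorbing $\deg z=\ell$ is exactly the verification the paper carries out inside the lemma defining $\mathscr A$ and $\mathscr B$.
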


\begin{proof}
For $N\in MCM_{\z}(B^{\#})$, 
$$\mathscr {BA}(N)=\mathscr B(\varphi:N^+\to {^{\t^{-1}}N^-},\psi:{^{\t}N^-}\to N^+)=N^+\oplus N^-$$
so $\mathscr {BA}(N)^{\circ}\cong N^{\circ}$ as graded $A$-modules via $(x,y)\mapsto x+y$. For $n\in N^{\circ}$, write $n=n_++n_-$ with $n_+\in N^+$, $n_-\in N^-$. Then $zn = zn_-+zn_+=-\widetilde\psi(n_-)+\widetilde\varphi(n_+)$, so $\mathscr {BA}(N)^{\circ}\cong N^{\circ}$ as graded $B^{\#}$-modules. Finally, since $\z(n)=\z(n_+)+\z(n_-)=n_+-n_-$ and $\z(n_+,n_-)=(n_+,-n_-)$, we have $\mathscr {BA}(N)\cong N$ as $B^{\#}[\z]$-modules.

\smallskip

For $(\varphi,\psi)\in TMF(f)$, 
$$\mathscr{AB}(\varphi,\psi)=\mathscr A(F\oplus {^{\t}G, \theta})=: (\varphi',\psi').$$
By definition of the $\z$-action on $F\oplus {^{\t}G}$, we obtain that $(F\oplus {^{\t}G})^+=F\oplus 0$ and $(F\oplus {^{\t}G})^-=0\oplus {^{\t}G}$. Thus $\varphi':F\oplus 0\to 0\oplus G$ and $\psi':0\oplus {^{\tw}G}\to F\oplus 0$. The maps are multiplication by $z$ and $-z$ respectively. Since
$$z(x,0)=(0,\widetilde\varphi(x))\qquad\text{ and }\qquad -z(0,y)=(\widetilde\psi(y),0)$$
we clearly have $(\varphi',\psi')\cong (\varphi,\psi)$.

\smallskip

For a morphism $\xi:M\to N$ of MCM $B^{\#}[\z]$ modules,
$$\mathscr {BA}(\xi)=\mathscr B(\xi|_{M^+},{^{\t^{-1}}\xi}_{M^-})=\xi|_{M^+}\oplus \xi|_{M^-}.$$
Composing with the isomorphism $(x,y)\mapsto x+y$ clearly recovers $\xi$.

\smallskip

For a morphism $(\alpha,\beta)$ of twisted matrix factorizations of $f$, recall the work above that $(F\oplus {^{\t}G})^+=F\oplus 0$ and $(F\oplus {^{\t}G})^-=0\oplus {^{\t}G}$. Thus
$$\mathscr {AB}(\alpha,\beta)=\mathscr A(\alpha\oplus {^{\t}\beta})=((\alpha\oplus{^{\t}\beta})|_{F\oplus 0},
{^{\t^{-1}}(\alpha\oplus{^{\t}\beta})|_{0\oplus {^{\t}G}}})=(\alpha\oplus 0,0\oplus\beta), $$
which is plainly isomorphic to $(\alpha,\beta)$.
\end{proof}

Now consider the following functor.

\begin{defn}[\textsc{coker}] 
We define a functor $$\textsc{coker}:TMF_R(f)\to S{\rm grmod} \quad \text{by} \quad (\varphi,\psi) \mapsto \coker\varphi.$$ 
A morphism $(\alpha,\beta):(\varphi,\psi)\to (\varphi',\psi')$ induces a morphism $\coker\varphi\to \coker\varphi'$, and we take this as our definition of $\textsc{coker}(\alpha,\beta)$.
\end{defn}

There is a forgetful functor $MCM_{\z}(B^{\#})\to MCM(B^{\#})$, and every graded MCM $B^{\#}$-module arises from a graded matrix factorization of $f+z^2$. It will be useful to have a functor $\mathscr C$ directly from $TMF(f)$ to $TMF(f+z^2)$ completing the following diagram, which is commutative up to equivalence, where Forget maps $N \rightarrow N^{\circ}$, or
$(M, \theta) \rightarrow M$.

\begin{figure}[h]
\centerline{
\xymatrix{
TMF_{A}(f)\ar@{->}[r]^{\mathscr C\hspace{.3in}}\ar@{->}[d]^{\mathscr B} & TMF_{A[z;\sqrt{\sigma}]}(f+z^2)\ar@{->}[d]^{\textsc{coker}}\\
MCM_{\z}(B^{\#})\ar@{->}[r]^{\text{Forget}} \ar@{->}@<5pt>[u]^{\mathscr A} & MCM(B^{\#})
}} 
\caption{}\label{fig:comzeta}
\end{figure}

\begin{notn}[$\ol{\ast}$]
We denote the extension of scalars functor $$A[z;\sqrt{\s}]\tsr_A - :A{\rm GrMod}\to A[z;\sqrt{\s}]{\rm GrMod}$$ on objects by $\overline{X}=A[z;\sqrt{\s}]\tsr_A X$  and on morphisms by $\overline{\phi} =A[z;\sqrt{\s}]\tsr_A \phi$. We extend $\sqrt{\s}$ by the identity to $A[z;\sqrt{\s}]$, defining $\sqrt{\s}(z)=z$.
\end{notn}

Since we extend $\sqrt{\s}$ by the identity to $A[z;\sqrt{\s}]$, then for $X\in A{\rm GrMod}$,  
$$\overline{^{\tau}X}=A[z;\sqrt{\s}]\tsr_A {^{\t}X} = {^{\t}(A[z;\sqrt{\s}]\tsr_A X)}={^{\tau}}\overline X$$
and similarly $\overline{^{\tau}\phi}={^{\tau}}\overline \phi.$

\begin{defn}[$\mathscr C$, $\Phi_{\Cscr}$, $\Psi_{\Cscr}$] \label{def:C}
Take a twisted matrix factorization $$(\varphi:F\to G,\; \psi:{^{\tw}G}\to F)$$ of $f$ over $A$. We define a functor 
$$\Cscr: TMF_{A}(f) \to TMF_{A[z;\sqrt{\sigma}]}(f+z^2)$$ 
by $\Cscr(\varphi,\psi)=(\Phi_{\Cscr},\Psi_{\Cscr})$ 
where
$$\Phi_{\Cscr}: {^{\tw}\Gbar} \oplus {^{\t}\Fbar}\to \Fbar\oplus {^{\t}\Gbar} \; \; \text{ is given by }\;
\begin{pmatrix} \overline\psi & -{\lambda_z^{{^{\t}\Gbar}}}\\ \lambda^{\Fbar}_z & {^{\t}\overline\varphi}\end{pmatrix},$$
$$\Psi_{\Cscr}:{^{\tw}\Fbar}\oplus {^{\tau^{3}}\Gbar}\to {^{\tw}\Gbar}\oplus {^{\t}\Fbar} \; \; \text{ is given by } \;
\begin{pmatrix} {^{\tw}\overline\varphi} & \lambda_z^{{^{\t}\Fbar}}\\ -{\lambda^{{^{\tw}\Gbar}}_z} & {^{\t}\overline\psi}\end{pmatrix}.$$ If $(\alpha, \beta)$ is a morphism in $TMF(f)$, then we define the image morphism by
$$\mathscr C(\alpha, \beta) := \left(\begin{pmatrix} {^{\tw}\overline\beta} & 0 \\ 0 & {^{\t}\overline\alpha}\\ \end{pmatrix}, \begin{pmatrix} \overline\alpha & 0\\ 0 & {^{\t}\overline\beta}\\ \end{pmatrix}\right).$$ 
\end{defn}

We leave it to the reader to check that $(\Phi_{\Cscr},\Psi_{\Cscr})$ is indeed a graded matrix factorization of $f+z^2$.

\begin{prop}
\label{commDiag1}
As $B^{\#}$-modules, $\mathscr B(\varphi,\psi)^{\circ}\cong \textsc{coker}\ \mathscr C(\varphi,\psi)$.
\end{prop}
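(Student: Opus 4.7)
My plan is to exhibit the asserted isomorphism via the counit of the extension/restriction-of-scalars adjunction along $A\hookrightarrow A[z;\sqrt{\sigma}]$. By the construction of $\mathscr B$, the graded $A[z;\sqrt{\sigma}]$-module $\mathscr B(\varphi,\psi)^{\circ} = F\oplus {}^{\tau}G$ has $A$ acting componentwise (with the appropriate twist on the second summand) and $z$ acting by $z\cdot(x,y) = (-\psi(y),\varphi(x))$; moreover $(f+z^2)$ annihilates it, so it is in fact a $B^{\#}$-module. Since $\overline{{}^{\tau}G} = {}^{\tau}\overline G$, we have $\overline F\oplus {}^{\tau}\overline G \cong A[z;\sqrt{\sigma}]\otimes_A (F\oplus {}^{\tau}G)$, and the counit
\[
\pi\colon \overline F\oplus {}^{\tau}\overline G\twoheadrightarrow F\oplus {}^{\tau}G = \mathscr B(\varphi,\psi)^{\circ},
\]
given on summands by $(b_1\otimes x,\,b_2\otimes y)\mapsto b_1\cdot(x,0)+b_2\cdot(0,y)$, is a graded surjection of $B^{\#}$-modules.

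The first task is to show $\pi\circ\Phi_{\Cscr}=0$. Using $A[z;\sqrt{\sigma}]$-linearity of both $\Phi_{\Cscr}$ and $\pi$, this reduces to a verification on the generators $(1\otimes y,0)$ and $(0,1\otimes x)$ of ${}^{\tw}\overline G\oplus {}^{\tau}\overline F$. A direct calculation yields
\[
\pi(\Phi_{\Cscr}(1\otimes y,0)) = \pi(1\otimes\psi(y),\,z\otimes y) = (\psi(y),0) + z\cdot(0,y) = (\psi(y),0)+(-\psi(y),0) = 0,
\]
and symmetrically $\pi(\Phi_{\Cscr}(0,1\otimes x)) = -z\cdot(x,0)+(0,\varphi(x)) = (0,-\varphi(x))+(0,\varphi(x)) = 0$. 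Hence $\pi$ descends to a surjection $\overline\pi\colon\coker\Phi_{\Cscr}\twoheadrightarrow\mathscr B(\varphi,\psi)^{\circ}$ of graded $B^{\#}$-modules.

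Showing $\overline\pi$ is injective is the main obstacle. My approach is to exhibit an inverse: define the $A$-linear map
\[
j\colon F\oplus {}^{\tau}G\to\coker\Phi_{\Cscr}, \qquad j(x,y) = \overline{(1\otimes x,\,1\otimes y)}.
\]
The identity $\overline\pi\circ j = \mathrm{id}$ is immediate, so it suffices to show $j$ is surjective, which amounts to producing a ``degree zero in $z$'' normal form for classes of $\coker\Phi_{\Cscr}$. Setting $(u,v)=(0,1\otimes x)$ and $(u,v)=(1\otimes y,0)$ in $\Phi_{\Cscr}$ yields the congruences
\[
(z\otimes x,\,0)\equiv (0,\,1\otimes\varphi(x)) \quad\text{and}\quad (0,\,z\otimes y)\equiv (-1\otimes\psi(y),\,0) \pmod{\operatorname{im}\Phi_{\Cscr}}.
\]
Since $\operatorname{im}\Phi_{\Cscr}$ is $A[z;\sqrt{\sigma}]$-stable and the identity $z^2\equiv -f$ holds in the $B^{\#}$-module $\coker\Phi_{\Cscr}$, iterated application of these reductions (with $z^2\otimes(\ast)$ absorbed as $-f\otimes(\ast)=-1\otimes f(\ast)$) replaces any $(z^n\otimes x,\,0)$ or $(0,\,z^n\otimes y)$ by a representative of the form $(1\otimes x',\,1\otimes y')$. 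Writing an arbitrary element of $\overline F\oplus {}^{\tau}\overline G$ as a finite $z$-expansion and summing such reductions establishes surjectivity of $j$, so $j$ and $\overline\pi$ are mutually inverse isomorphisms of graded $B^{\#}$-modules.
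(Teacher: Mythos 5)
Your argument is sound and, at bottom, performs the same verification as the paper's proof, just organized differently: the paper identifies $\textsc{coker}\ \Cscr(\varphi,\psi)$ with an explicit quotient $(F\oplus{}^{\t}F\oplus{}^{\t}G\oplus{}^{\tw}G)/I$, equips it with a $z$-action, and writes down the isomorphism $(v_1,v_2,v_3,v_4)\mapsto(v_1+\psi(v_4),\,v_3-{}^{\t}\varphi(v_2))$; you instead package essentially the same map as the counit of the extension-of-scalars adjunction and prove bijectivity via the section $j$ together with a normal-form argument. Your route avoids the four-component bookkeeping, and your surjectivity-of-$j$ step (reduce the $z$-degree using the two congruences) is exactly the content of the paper's claim that the relevant submodule is generated over $A$ by the four listed families. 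So the strategy is fine and arguably cleaner.

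One point needs fixing. You are reading the matrix of $\Phi_{\Cscr}$ with the column-vector convention, so that $\Phi_{\Cscr}(1\otimes y,0)=(1\otimes\psi(y),\,+z\otimes y)$. The superscripts in Definition \ref{def:C} (e.g.\ the entry $-\lambda_z^{{^{\t}\Gbar}}$, which can only be a map ${^{\tw}\Gbar}\to{^{\t}\Gbar}$, i.e.\ from the \emph{first} domain summand to the \emph{second} codomain summand) force the opposite convention, under which $\Phi_{\Cscr}(1\otimes y,0)=(1\otimes\psi(y),\,-z\otimes y)$ and $\Phi_{\Cscr}(0,1\otimes x)=(z\otimes x,\,1\otimes\varphi(x))$ — these are exactly the generators of $\im\Phi_{\Cscr}$ used in the paper's proof. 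With those signs the plain counit does \emph{not} annihilate $\im\Phi_{\Cscr}$: you get $\pi(1\otimes\psi(y),-z\otimes y)=(2\psi(y),0)\neq 0$. The correct map twists the scalar by $\z$, namely $(b_1\otimes x,\,b_2\otimes y)\mapsto\z(b_1)\cdot(x,0)+\z(b_2)\cdot(0,y)$, which is precisely the isomorphism the paper writes down in components. This is ultimately harmless — the two readings of $\Phi_{\Cscr}$ differ by the sign automorphism $(u,v)\mapsto(u,-v)$ and so have isomorphic cokernels — but as written your verification of $\pi\circ\Phi_{\Cscr}=0$ is only valid for your reading of the matrix, not the paper's.
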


\begin{proof}
Let \;$\widehat{\ } $  denote the extension of scalars functor $B^{\#}\tsr_A -$.
Recall that \linebreak $\mathscr B(\varphi, \psi) = F\oplus {^{\t}G}$ as an $A$-module and the $B^{\#}$-module structure is given by $z\cdot (x,y)=(-\widetilde\psi(y),\widetilde\varphi(x))$ [Notation~\ref{not:tilde}].

On the other hand, the $B^{\#}$-module
$\textsc{coker} \ \mathscr C(\varphi,\psi)= \ B^{\#}\tsr_{A[z;\sqrt{\s}]} \coker\Phi_{\Cscr}$  is isomorphic to the quotient of
$\widehat{F}\oplus {^{\t}}\widehat{G}$ by elements of the form
$$(1\tsr\psi(v'), -z\tsr v'),\ v'\in {^{\tw}}G\quad\text{and}\quad (z\tsr v, 1\tsr{^{\t}}\varphi(v)),\ v\in {^{\t}}F.$$
Observe that the $B^{\#}$-submodule of
$\widehat{F}\oplus {^{\t}}\widehat{G}$ generated by such elements is generated as an $A$-module by
\begin{align*}
(1\tsr\psi(v'), -z\tsr v'),\qquad& (z\tsr\psi(v'), f\tsr v')=(z\tsr\psi(v'), 1\tsr fv')\\
(z\tsr v, 1\tsr{^{\t}}\varphi(v)),\qquad& (-f\tsr v, z\tsr{^{\t}}\varphi(v))=(-1\tsr fv, z\tsr{^{\t}}\varphi(v)).
\end{align*}

Now, as graded $A$-modules we have $\widehat{F}=B^{\#}\tsr_A F=(A\oplus Az)\tsr_A F\cong F\oplus {^{\t}}F$ and likewise ${^{\t}}\widehat{G}\cong {^{\t}}G\oplus {^{\tw}}G$. We can describe $\textsc{coker} \ \mathscr C(\varphi,\psi)$ as an $A$-module under these identifications as follows. 
Let $I\subset F\oplus {^{\t}}F\oplus {^{\t}}G\oplus {^{\tw}}G$ be the graded $A$-submodule generated by elements of the form 
$$(\widetilde\psi(v'),0,0,-v'),\quad (0,\widetilde\psi(v'),fv',0),\quad (0,v,\widetilde\varphi(v),0),\quad (-fv,0,0,\widetilde\varphi(v))$$ 
where $v\in F$ and $v'\in G$. (Note that these tuples are homogeneous if $v$ and $v'$ are.) Then it is clear that $$\textsc{coker} \ \mathscr C(\varphi,\psi)\cong ( F\oplus {^{\t}}F\oplus {^{\t}}G\oplus {^{\tw}}G)/I$$ as $A$-modules. This extends to an isomorphism of $B^{\#}$-modules by defining a \linebreak
$B^{\#}$-module structure on $( F\oplus {^{\t}}F\oplus {^{\t}}G\oplus {^{\tw}}G)/I$ by
$$z\cdot (v_1,v_2,v_3,v_4)= (fv_2 , -v_1, fv_4, -v_3).$$
(It is straightforward to check that $z^2+f$ acts as 0 and $zI\subset I$.)
Now a direct calculation shows that
$$(v_1,v_2,v_3,v_4) \mapsto ( v_1+\psi(v_4), v_3-{^{\t}}\varphi(v_2))$$
defines a graded $B^{\#}$-module isomomorphism between $( F\oplus {^{\t}}F\oplus {^{\t}}G\oplus {^{\tw}}G)/I$ and $\mathscr B(\varphi,\psi)^{\circ}$. 

\end{proof}

\begin{lemma}[$\Delta$, $\Sigma$]
\label{factoringN}
Let $N$ be a graded MCM $B^{\#}$-module and $F=A[z;\sqrt{\s}]\tsr_A~N$. Then, the pair
$$\Delta=\lambda_z^{A[z;\sqrt{\s}]}\tsr 1-1\tsr \lambda_z^N:{^{\t}F}\to F$$
$$\Sigma={^{\t}(\lambda_z^{A[z;\sqrt{\s}]}\tsr 1+1\tsr \lambda_z^N)}:{^{\tw}F}\to {^{\t}F}$$ is a twisted matrix factorization of $f+z^2$ with cokernel isomorphic to $N$. If $N$ has no graded $B^{\#}$-free direct summand, then the factorization is reduced. 
\end{lemma}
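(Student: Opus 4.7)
The plan is to verify the lemma in four steps: (a) $F$ is finitely generated graded free over $W := A[z;\sqrt{\s}]$; (b) $\Delta$ and $\Sigma$ are well-defined graded $W$-module maps with the stated twisted sources and targets; (c) the identities $\Delta\Sigma = \lambda_{f+z^2}^F$ and $\Sigma \cdot {}^{\tw}\Delta = \lambda_{f+z^2}^{{}^{\t}F}$ hold; and (d) $\coker \Delta \cong N$ as $B^{\#}$-modules, whence the reducedness claim follows by Proposition~\ref{reducedIsom}. Step (a) is immediate from Lemma~\ref{MCMtransfer}: $N$ is a finitely generated graded free $A$-module, so $F = W \tsr_A N$ is a finitely generated graded free $W$-module of the same rank.

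For (b), the key observation is that normality of $f+z^2$ in $W$ (combined with Hypothesis~\ref{hyp:db}) forces the skew-polynomial commutation $az = z\sqrt{\s}(a)$ for $a \in A$. Consequently $\lambda_z^N: {}^{\t}N \to N$ is graded $A$-linear and $\lambda_z^W: {}^{\t}W \to W$ is graded $W$-linear. Using the extension $\sqrt{\s}(z)=z$, there are canonical graded $W$-module isomorphisms $({}^{\t} W) \tsr_A N \cong {}^{\t} F$ (the identity on underlying vector spaces) and $W \tsr_A ({}^{\t} N) \cong {}^{\t} F$ (given by $w \tsr n \mapsto \sqrt{\s}(w) \tsr n$). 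Under these identifications, $\lambda_z^W \tsr 1$ and $1 \tsr \lambda_z^N$ become graded $W$-module maps ${}^{\t} F \to F$ whose difference is $\Delta$; an analogous analysis yields $\Sigma: {}^{\tw}F \to {}^{\t}F$.

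For (c), after the above identifications one computes $\Delta(w \tsr n) = zw \tsr n - \sqrt{\s}^{-1}(w) \tsr zn$ and $\Sigma(w \tsr n) = zw \tsr n + \sqrt{\s}^{-1}(w) \tsr zn$. The cross terms in the product $\Delta \Sigma$ take the form $\pm z\sqrt{\s}^{-1}(w) \tsr zn$ and cancel (using $\sqrt{\s}(z)=z$), leaving $z^2 w \tsr n - \s^{-1}(w) \tsr z^2 n$. Since $N$ is a $B^{\#}$-module, $z^2 n = -fn$; moving $f$ across the tensor and using $\sqrt{\s}(f)=f$ (so $fz=zf$) together with $af = f\s(a)$ in $A$ yields $\s^{-1}(w) f = fw$ for all $w \in W$. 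Hence $\Delta\Sigma(w \tsr n) = (f+z^2)w \tsr n = \lambda_{f+z^2}^F(w \tsr n)$, and the verification of $\Sigma \cdot {}^{\tw}\Delta = \lambda_{f+z^2}^{{}^{\t}F}$ is the symmetric computation.

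For (d), the surjection $\mu: F \to N$, $w \tsr n \mapsto w \cdot n$ (via the $B^{\#}$-action on $N$), satisfies $\mu \circ \Delta = 0$ since $z(wn) = \sqrt{\s}^{-1}(w)(zn)$ in $N$. Iterating the congruence $\Delta(z^{k-1} \tsr n) = z^k \tsr n - z^{k-1} \tsr zn$ (which uses $\sqrt{\s}^{-1}(z^{k-1}) = z^{k-1}$) shows that modulo $\mathrm{Im}\,\Delta$ every element of $F$ reduces to one of the form $1 \tsr n$, so $\mu$ induces an isomorphism $\coker \Delta \cong N$. Proposition~\ref{reducedIsom} then yields the reducedness claim under the hypothesis that $N$ has no graded free $B^{\#}$-direct summand. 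The principal difficulty is in steps (b)--(c): tracking the $\sqrt{\s}$-twists so that the formal $(z-z)(z+z) = z^2 - z^2$ cancellation survives non-commutativity; once the identifications are correctly made, the remaining claims follow.
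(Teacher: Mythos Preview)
Your argument is correct and follows essentially the same route as the paper: freeness of $F$ via Lemma~\ref{MCMtransfer}, a direct computation of $\Delta\Sigma$ exploiting $z^2n=-fn$, and identifying $\coker\Delta$ with $N$ via the multiplication map. The one notable difference is in the reducedness step: the paper argues directly that a unit entry in the matrix of $\Delta$ would force a trivial summand $(\lambda_{f+z^2},1)$ and hence a free $B^{\#}$-summand of $N$, whereas you simply invoke Proposition~\ref{reducedIsom} (reduced $\Leftrightarrow$ cokernel has no free summand), which is cleaner and entirely legitimate given that proposition is already available.
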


\begin{proof}
By Lemma \ref{MCMtransfer}, $F$ is a graded free $A[z;\sqrt{\s}]$-module. By direct calculation, for any $n\in N$, we have
\begin{align*}
(\lambda_z^{A[z;\sqrt{\s}]}\tsr 1-1\tsr \lambda_z^N)&{^{\t}(\lambda_z^{A[z;\sqrt{\s}]}\tsr 1+1\tsr \lambda_z^N)}(1\tsr n)\\
&=(\lambda_z^{A[z;\sqrt{\s}]}\tsr 1-1\tsr \lambda_z^N)(z\tsr n + 1\tsr zn)\\
&=z^2\tsr n + z\tsr zn - z\tsr zn -1\tsr z^2n\\
&=z^2\tsr n + 1\tsr fn=(z^2+f)\tsr n
\end{align*}
where $-z^2n=fn$ holds because $N$ is a $B^{\#}$-module. Thus $\Delta\Sigma=\lambda_{f+z^2}^F\tsr 1$. A similar calculation shows
$\Sigma{^{\tw}\Delta}=\lambda_{f+z^2}^{{^{\t}F}}\tsr 1$. Now, 
$\im\ \Delta$  is generated as a graded $A[z;\sqrt{\s}]$-module by $\{z\tsr n - 1\tsr zn\ |\ n\in N\}$. It follows that
$$\coker\Delta = (A[z;\sqrt{\s}]\tsr_A N) / (\{z\tsr n - 1\tsr zn\ |\ n\in N\}) \cong A[z;\sqrt{\s}]\tsr_{A[z;\sqrt{\s}]} N \cong N$$
as graded $A[z;\sqrt{\s}]$-modules, and hence as graded $B^{\#}$-modules.

Finally, if the matrix of $\Delta$  with respect to some basis contains a term $u\tsr 1$ where $u\in A[z]$ is a unit, then $u$ is a unit of $A$ and the matrix of $1\tsr \lambda_z^N$ contains the term $u\tsr 1$. This implies $\Sigma$ contains the same term in the same position, and thus $(\Delta,\Sigma)$ contains a direct summand isomorphic to $(\lambda_{f+z^2}\tsr 1, 1\tsr 1)$ and $N$ contains $B^{\#}$ as a direct summand.
\end{proof}

Now we turn our attention to the symmetric condition of twisted matrix factorizations; recall Definition~\ref{def:sym}.

\begin{thm}
\label{imageOfC}
Let $(\Phi,\Psi)\in TMF(f+z^2)$ be reduced. Then $(\Phi,\Psi)$ is isomorphic to a factorization in the image of $\mathscr C$ if and only if $(\Phi,\Psi)$ is symmetric.
\end{thm}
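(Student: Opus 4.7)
The proof proceeds by verifying both implications separately.

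\emph{Sufficiency (image of $\mathscr{C}$ $\Rightarrow$ symmetric).} This direction is a direct computation: given $(\varphi, \psi) \in TMF_A(f)$, I will exhibit an explicit isomorphism $\mathscr{C}(\varphi, \psi) \to T(\mathscr{C}(\varphi, \psi))$. Observing that the matrix entries of $T(\mathscr{C}(\varphi, \psi)) = ({}^{\tau^{-1}}\Psi_{\mathscr{C}}, {}^{\tau}\Phi_{\mathscr{C}})$ are, after the $\tau^{\pm 1}$-twists collapse via $\tau^2 = \mathrm{tw}$ and the definitions of $\overline{\varphi}, \overline{\psi}, \lambda_z$, the same as those of $\mathscr{C}(\varphi, \psi)$ up to a rearrangement of the two direct summands and a sign, the required isomorphism is a pair of ``swap with signs'' block matrices on $\overline{F} \oplus {}^{\tau}\overline{G}$ and ${}^{\mathrm{tw}}\overline{G} \oplus {}^{\tau}\overline{F}$. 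Since symmetry is preserved under isomorphism of factorizations, this implication follows.

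\emph{Necessity (symmetric reduced $\Rightarrow$ image of $\mathscr{C}$).} Given $(\Phi, \Psi)$ reduced and symmetric, let $N := \coker \Phi$, which by Proposition~\ref{reducedIsom} is a graded MCM $B^{\#}$-module with no free summand. The strategy is to promote $N$ to an object $(N, \theta) \in MCM_{\zeta}(B^{\#})$ by constructing a $\zeta$-compatible involution $\theta \in \End_{\zeta}(N)$. Once this is done, set $(\varphi, \psi) := \mathscr{A}(N, \theta) \in TMF_A(f)$. Proposition~\ref{commDiag1} together with the equivalence of Theorem~\ref{thm:mcm-tmf} yields $\coker \mathscr{C}(\varphi, \psi) \cong \mathscr{B}(\varphi, \psi)^{\circ} \cong N$ as $B^{\#}$-modules. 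Since $(\Phi, \Psi)$ and $\mathscr{C}(\varphi, \psi)$ are both reduced with isomorphic cokernels, Proposition~\ref{reducedIsom} delivers $(\Phi, \Psi) \cong \mathscr{C}(\varphi, \psi)$.

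\emph{Construction of $\theta$.} The symmetry data is a pair of graded isomorphisms $X: P \to {}^{\tau}Q$ and $Y: Q \to {}^{\tau^{-1}}P$ (where $\Phi: P \to Q$) satisfying compatibility with $\Phi$ and ${}^{\tau^{-1}}\Psi$. Applying the normalization procedure from the proof of Proposition~\ref{symmetricRoot}---namely, rescaling $X$ and $Y$ by the inverse square roots of the nilpotent parts arising from the Jordan--Chevalley decomposition of $T(X,Y) \circ (X,Y) \in \End(\Phi, \Psi)$---I can arrange that $T(X,Y)\circ (X,Y) = (\mathrm{id}_P, \mathrm{id}_Q)$. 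The resulting normalized pair then descends to a well-defined $A$-linear involution $\theta$ on $N = \coker \Phi$; the $\tau$-twist built into the codomain of $X$ forces the anti-commutation $\theta(zn) = -z\theta(n)$ required for $\zeta$-compatibility.

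\emph{Main obstacle.} The principal technical work lies in verifying rigorously that $\theta$ is a well-defined, $\zeta$-compatible involution, tracking the various $\tau$-twists and the interaction between $X, Y$, and multiplication by $z$. This uses reducedness of $(\Phi, \Psi)$ to apply Lemma~\ref{nilpotentDecomp}, which controls the form of units in $\End(\Phi, \Psi)$ for indecomposable factorizations. An alternative route would decompose $(\Phi, \Psi)$ via Krull--Schmidt into indecomposables (using that $A[z;\sqrt{\sigma}]$ is also noetherian AS-regular), handle symmetric indecomposables via Proposition~\ref{symmetricRoot}, and pair asymmetric indecomposables $\{(\Phi', \Psi'), T(\Phi', \Psi')\}$ with images of $\mathscr{C}$ by hand; this appears to require separate arguments for the two cases without being substantially cleaner.
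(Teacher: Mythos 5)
Your overall architecture coincides with the paper's: sufficiency by exhibiting the swap isomorphism between $\mathscr C(\varphi,\psi)$ and $T\mathscr C(\varphi,\psi)$ (the paper uses the unsigned swap $\left(\begin{smallmatrix}0&1\\1&0\end{smallmatrix}\right)$, which suffices), and necessity by upgrading $N=\coker\Phi$ to an object $(N,\theta)\in MCM_{\z}(B^{\#})$, applying $\mathscr C\mathscr A$, and closing with Proposition~\ref{commDiag1} and Proposition~\ref{reducedIsom}. However, there is a genuine gap at the one step you flag as the ``main obstacle'': your claimed mechanism for the anti-commutation $\theta(zn)=-z\theta(n)$ is wrong. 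The $\t$-twist cannot be its source: $\t={}^{\sqrt{\s}}(-)[-\ell]$ and $\sqrt{\s}$ is extended to $A[z;\sqrt{\s}]$ by $\sqrt{\s}(z)=z$, so twisting by $\t$ does nothing to the $z$-action. What the symmetry data actually gives you is this: since $({}^{\t^{-1}}\Psi)X=Y\Phi$, the map $Y$ carries $\im\Phi$ into $\im({}^{\t^{-1}}\Psi)$ and hence descends to a graded $A[z;\sqrt{\s}]$-linear (in particular $z$-\emph{commuting}) map $\coker\Phi\to\coker({}^{\t^{-1}}\Psi)$. To read this as a $\z$-compatible endomorphism of $N$ you must identify $\coker({}^{\t^{-1}}\Psi)$ with ${}^{\z}N$; the sign lives entirely in that identification, and it is not a formal consequence of the definitions --- it is the nontrivial assertion that the cosyzygy of $N$ over $B^{\#}$ is ${}^{\z}N$ (up to twist).

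The paper closes exactly this hole before constructing $\theta$: since $(\Phi,\Psi)$ is reduced, $N$ has no $B^{\#}$-free summand, so by Lemma~\ref{factoringN} and Proposition~\ref{reducedIsom} one may replace $(\Phi,\Psi)$ by the canonical factorization $(\Delta,\Sigma)$ with $\Delta=\lambda_z\tsr 1-1\tsr\lambda_z$ and $\Sigma={}^{\t}(\lambda_z\tsr 1+1\tsr\lambda_z)$ on $A[z;\sqrt{\s}]\tsr_A N$. For this concrete presentation the two rows of the comparison diagram visibly have cokernels $N$ and ${}^{\z}N$ respectively --- the opposite signs in $\Delta$ versus $\Sigma$ are precisely what produce the $\z$-twist --- and the induced map $\d$ then satisfies $\d(bn)=\z(b)\d(n)$ by construction; the Jordan--Chevalley normalization (which, as in the paper, requires first reducing to $N$ indecomposable so that Lemma~\ref{nilpotentDecomp} applies) then makes $\d$ an involution. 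So your outline is repairable, but as written the descent step asserts the key identity without a valid justification; you need the reduction to the Lemma~\ref{factoringN} factorization (or an independent proof that $\coker({}^{\t^{-1}}\Psi)\cong{}^{\z}N$ as graded $B^{\#}$-modules) before the rest of your argument goes through.
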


\begin{proof}
If $(\varphi,\psi)\in TMF(f)$, we have
$$
\mathscr C(\varphi,\psi) =
\left( 
\begin{pmatrix} \overline\psi & -\lambda_z^{{^{\t}}\Gbar}\\ \lambda^{\Fbar}_z & {^{\t}}\overline\varphi\end{pmatrix},
\begin{pmatrix} {^{\tw}}\overline\varphi & \lambda_z^{{^{\t}}\Fbar}\\ -\lambda^{{^{\tw}}\Gbar}_z & {^{\t}}\overline\psi\end{pmatrix}
\right)$$
and
$$T\mathscr C(\varphi,\psi) =
\left(
\begin{pmatrix} {^{\t}}\overline\varphi & \lambda_z^{^{\Fbar}}\\ -\lambda^{{^{\t}}\Gbar}_z & \overline\psi\end{pmatrix},
\begin{pmatrix} {^{\t}}\overline\psi & -\lambda_z^{{^{\tw}}\Gbar}\\ \lambda^{{^{\t}}\Fbar}_z & {^{\tw}}\overline\varphi\end{pmatrix}
\right)$$
which are easily seen to be isomorphic via the map 
$$\left(\begin{pmatrix} 0 & 1\\ 1 & 0\\ \end{pmatrix},\begin{pmatrix} 0 & 1\\ 1 & 0\\ \end{pmatrix}\right).$$
For the converse, let $(\Phi,\Psi)\in TMF(f+z^2)$ be symmetric and reduced. Then $N=\coker\Phi$ has no $B^{\#}$-free direct summand, and hence $(\Phi,\Psi)$ is isomorphic to the matrix factorization of Lemma \ref{factoringN} by  Proposition \ref{reducedIsom}. Thus no generality is lost by assuming $(\Phi,\Psi)$ is the factorization of Lemma \ref{factoringN}. 

By assumption, there exist graded $A[z;\sqrt{\s}]$-module isomorphisms $\alpha, \beta,$ $\d$ such that the following diagram commutes and the rows are exact. The notation $^{\z}N$ indicates that the action of $z$ is twisted by $\z$, as required by exactness of the bottom row.
\medskip

\centerline{
\xymatrix{
{^{\t}}(A[z;\sqrt{\s}]\tsr_A N)\ar@{->}[rr]^-{\lambda_z\tsr 1-1\tsr \lambda_z}\ar@{->}[d]^{\alpha} && A[z;\sqrt{\s}]\tsr_A N\ar@{->}[r]^-{\pi_1}\ar@{->}[d]^{\beta} & N\ar@{->}[d]^{\d}\\
{^{\t}}(A[z;\sqrt{\s}]\tsr_A N)\ar@{->}[rr]^-{\lambda_z\tsr 1+1\tsr \lambda_z} && A[z;\sqrt{\s}]\tsr_A N\ar@{->}[r]^-{\pi_2} & ^{\z}N.\\
}}
\medskip

Note that $\d$ is also a $B^{\#}$-module isomorphism, and for $b\in B$ and $n\in N$, we have $\d(bn)=\z(b)\d(n)$. The same underlying map also gives a $B^{\#}$-module map $\d':{^{\z}}N\to N$ since $\d'(b\cdot n)=\d(\z(b)n)=\z^2(b)\d(n)=b\d'(n)$. We may assume that $N$ is indecomposable. In this case, arguing as in Lemma \ref{nilpotentDecomp}, we may assume $\d'\d={\rm id}_N+\rho$ and $\d\d'={\rm id}_{^{\z}N}+\rho'$ where $\rho$ and $\rho'$ are nilpotent. Then as maps of vector spaces, $\rho=\rho'$. Replacing $\d$ with $\d({\rm id}_N+\rho)^{-1/2}$ and $\d'$ with $\d'(1+\rho')^{-1/2}$ yields $\d'\d={\rm id}_N$. So as a $k$-linear map, $\d^2={\rm id}_N$ and $\d$ is $\z$-compatible, hence $(N,\delta) \in MCM_{\z}(B^{\#})$.

Now $\textsc{coker} \mathscr C\mathscr A((N,\delta))\cong N$ by Proposition \ref{commDiag1}. Since $N$ has no $B^{\#}$-free direct summand, $\mathscr C\mathscr A((N,\delta))$ is reduced and hence isomorphic to $(\Phi,\Psi)$ by Proposition \ref{reducedIsom}.

\end{proof}


\section{Noncommutative Kn\"{o}rrer periodicity} \label{sec:KP}
The goal of this section is to establish the main result of this article: a noncommutative analog of Kn\"{o}rrer's Periodicity Theorem [Theorem~\ref{KP}]. Recall Notations~\ref{not:tw} and \ref{not:Rf} and the notation set in the previous section.

We begin by considering the following restriction functors.

\begin{defn}[Res, res] Let Res: $TMF_{A[z;\sqrt{\sigma}]}(f+z^2) \to TMF_{A}(f)$ and let res: $MCM(B^{\#})\to MCM(B)$ denote the natural restriction functors between categories of twisted matrix factorizations and MCM modules, respectively.
Here, ${\rm res}(M) = M/zM$ and ${\rm Res}(\Phi:F\to G,\Psi:{^{\tw}G}\to F)$ is the factorization defined by the induced maps $F/zF\to G/zG$ and ${^{\tw}(G/zG)}\to F/zF$.
\end{defn}

Note that these functors make the diagram in Figure~\ref{fig:comm-nozeta} below commute up to equivalence.

\begin{figure}[h]
\centerline{
\xymatrix{
TMF_{A[z;\sqrt{\sigma}]}(f+z^2)\ar@{->}[r]^-{\rm Res}\ar@{->}[d]^{\textsc{coker}} & TMF_{A}(f)\ar@{->}[d]^{\textsc{coker}}\\
MCM(B^{\#})\ar@{->}[r]^-{\rm res}& MCM(B)
}}
\caption{}\label{fig:comm-nozeta}
\end{figure}

Later in Lemma~\ref{upDown}, we compare these restriction functors with the functor $\mathscr C$ [Definition~\ref{def:C}] that relates twisted matrix factorizations of a regular, normal element $f$ of an Artin-Schelter regular algebra $A$ with that of the element $f + z^2$ of the Ore extension $A[z;\sqrt{\sigma}]$. In particular, the functor ${\rm Res}$ is not an inverse of $\mathscr C$ (cf. Figure~\ref{fig:comzeta}). 

Now we prove a variation of Theorem~\ref{imageOfC} for~${\rm Res}$. Recall that every graded MCM $B^{\#}$-module is a graded free $A$-module.

\begin{lemma}
\label{resSymm}
Let $N$ be a graded MCM $B^{\#}$-module. Let $\l^N_z:{^{\t}N}\to N$ be the graded $A$-module homomorphism representing left multiplication by $z$.
\begin{enumerate}
\item[\textnormal{(1)}] $(\l^N_z,-{^{\t}\l^N_z})\in TMF(f)$ and $\coker \l^N_z\cong N/zN$. 
\smallskip

\item[\textnormal{(2)}]  If $N$ contains no $B^{\#}$-free direct summand, then $(\l^N_z,-{^{\t}\l^N_z})$ is reduced and symmetric.
\end{enumerate}
We conclude that if $(\Phi,\Psi)\in TMF(f+z^2)$ is reduced, then ${\rm Res}(\Phi,\Psi)\cong (\l^N_z,-{^{\t}\l^N_z})$ where $N=\coker \Phi$. In particular, ${\rm Res}(\Phi,\Psi)$ is reduced and symmetric. 
\end{lemma}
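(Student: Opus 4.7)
I would prove the four assertions in order.

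For part (1), the verification is direct. By Lemma \ref{MCMtransfer}, $N$ is graded free over $A$, so ${}^{\tau} N$ and $N$ qualify as the modules in a twisted matrix factorization. The identities $\lambda_z^N \circ (-{}^{\tau} \lambda_z^N) = \lambda_f^N$ and $-{}^{\tau} \lambda_z^N \circ {}^{\tw} \lambda_z^N = \lambda_f^{{}^{\tau} N}$ both reduce to the single observation that $-z^2 = f$ in $B^{\#}$. The identification $\coker \lambda_z^N \cong N/zN$ is immediate.

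The symmetry half of part (2) is a direct calculation: since ${}^{\tau^{-1}} \circ {}^{\tau} = \mathrm{id}$, one finds $T(\lambda_z^N, -{}^{\tau} \lambda_z^N) = (-\lambda_z^N, {}^{\tau} \lambda_z^N)$, and the pair $(\mathrm{id}_{{}^{\tau} N}, -\mathrm{id}_N)$ furnishes an isomorphism back to the original factorization. For the reducedness half, Proposition \ref{reducedIsom} reduces the claim to proving the contrapositive: if $N/zN$ has a graded $B$-free summand $B[m]$, then $N$ has a graded $B^{\#}$-free summand $B^{\#}[m]$. My plan is to lift the corresponding splitting. Let $\bar e \in (N/zN)_{-m}$ generate the summand $B[m]$, let $\bar\pi : N \twoheadrightarrow B[m]$ be the induced projection, and lift $\bar e$ to $e \in N_{-m}$, giving the degree-zero map $\mu : B^{\#}[m] \to N$, $1 \mapsto e$. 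Since $z$ is regular in $B^{\#}$ (a direct check using the decomposition $B^{\#} = A \oplus Az$), there is a short exact sequence $0 \to B^{\#}[m - \ell] \xrightarrow{\cdot z} B^{\#}[m] \to B[m] \to 0$ of graded $B^{\#}$-modules. Applying $\Hom_{B^{\#}}(N, -)$ and invoking $\Ext_{B^{\#}}^1(N, B^{\#}[m - \ell]) \cong \Ext_{B^{\#}}^1(N, B^{\#})[m - \ell] = 0$ (from the MCM hypothesis together with the shift compatibility of Ext), I obtain a lift $\pi : N \to B^{\#}[m]$ of $\bar\pi$. Because $e$ lies in the bottom degree of the shifted summand, $\pi(e) \in B^{\#}[m]_{-m} = B^{\#}_0 = \k$, and $\pi(e)$ reduces to $\bar\pi(\bar e) = 1$ modulo $z$, forcing $\pi(e) = 1$. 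Hence $\pi\mu = \mathrm{id}_{B^{\#}[m]}$, so $B^{\#}[m]$ is a graded free direct summand of $N$, contradicting the hypothesis.

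For the concluding assertion, let $(\Phi, \Psi) \in TMF(f + z^2)$ be reduced and set $N := \coker \Phi$. By Proposition \ref{reducedIsom} applied to $f + z^2$, $N$ has no graded $B^{\#}$-free summand, so Lemma \ref{factoringN} yields a reduced factorization $(\Delta, \Sigma)$ of $f + z^2$ with $\coker \Delta \cong N$; the uniqueness clause of Proposition \ref{reducedIsom} then gives $(\Phi, \Psi) \cong (\Delta, \Sigma)$. Reducing $\Delta = \lambda_z^{A[z;\sqrt{\s}]} \otimes 1 - 1 \otimes \lambda_z^N$ and $\Sigma$ modulo $z$ yields ${\rm Res}(\Delta, \Sigma) = (-\lambda_z^N, {}^{\tau} \lambda_z^N)$, which is isomorphic to $(\lambda_z^N, -{}^{\tau} \lambda_z^N)$ via $(\mathrm{id}, -\mathrm{id})$. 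The ``in particular'' conclusion then follows from part (2).

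The main obstacle will be the reducedness step in part (2): identifying the short exact sequence $0 \to B^{\#}[-\ell] \to B^{\#} \to B \to 0$ of graded $B^{\#}$-modules, invoking the MCM $\Ext^1$-vanishing to lift the cokernel-level projection to a $B^{\#}$-linear map $N \to B^{\#}[m]$, and then using the bottom-degree placement of $e$ to promote this lift into a genuine splitting of $\mu$.
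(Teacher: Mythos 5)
Your proof is correct and follows the same skeleton as the paper's (part (1) by the relation $-z^2=f$ in $B^{\#}$, part (2) via Proposition~\ref{reducedIsom}, and the final claim by matching cokernels of reduced factorizations), but it differs in two places worth noting. First, for the reducedness half of (2) the paper simply asserts that if $N$ has no graded $B^{\#}$-free summand then $N/zN$ has no graded $B$-free summand; you actually prove this, via the exact sequence $0\to B^{\#}[m-\ell]\xrightarrow{\cdot z}B^{\#}[m]\to B[m]\to 0$ and the vanishing $\Ext^1_{B^{\#}}(N,B^{\#})=0$ coming from the MCM hypothesis to lift the projection onto a putative summand of $N/zN$ to a splitting of $N$ itself. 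That lifting argument is sound (the degree bookkeeping $\pi(e)\in B^{\#}_0=\k$ correctly forces $\pi(e)=1$) and fills a real gap in the paper's terse phrasing. Second, for the concluding assertion the paper identifies $\textsc{coker}\,{\rm Res}(\Phi,\Psi)$ with $N'/zN'$ using the commutativity of Figure~\ref{fig:comm-nozeta} and then invokes Proposition~\ref{reducedIsom}, whereas you route through Lemma~\ref{factoringN}: you replace $(\Phi,\Psi)$ by the isomorphic reduced factorization $(\Delta,\Sigma)$ and compute ${\rm Res}(\Delta,\Sigma)=(-\lambda_z^N,{}^{\t}\lambda_z^N)$ explicitly. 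Your route has the small advantage of making transparent that ${\rm Res}(\Phi,\Psi)$ is itself reduced before Proposition~\ref{reducedIsom} is applied (strictly, that proposition's uniqueness clause requires both factorizations to be reduced), a point the paper's version leaves implicit. Your direct exhibition of the symmetry isomorphism $(\mathrm{id}_{{}^{\t}N},-\mathrm{id}_N)$ is also fine and marginally more elementary than the paper's cokernel comparison.
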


\begin{proof}
By Lemma \ref{MCMtransfer}, $N$ is a graded free $A$-module.  Since $N$ is a $B^{\#}$-module, $-\l^N_z({^{\tau}\l^N_z}(n))=-z^2n=fn$. So $(\l^N_z, -{^{\t}\l^N_z})$ is a twisted matrix factorization of $f$ with cokernel $N/zN$. This proves (1).

We also have $\textsc{coker}\ T(\l^N_z, -{^{\t}\l^N_z}) = \textsc{coker}(-\l^N_z,{^{\t}\l^N_z})=N/zN$.  Provided $N$ has no $B^{\#}$-free direct summand, $N/zN$ has no $B$-free direct summand, so statement (2) follows from Proposition \ref{reducedIsom}.

Now take $(\Phi,\Psi)\in TMF(f+z^2)$ reduced. Then $N'=\coker\Phi$ is a graded MCM $B^{\#}$-module. By (1), $(\l^{N'}_z, -^{\tau}\l^{N'}_z)\in TMF(f)$ with $\coker \l^{N'}_z = N'/zN'$. 
By the commutativity of the diagram in Figure \ref{fig:comm-nozeta}, $\textsc{coker}\ {\rm Res}(\Phi,\Psi)=N'/zN'$. Hence
by Proposition \ref{reducedIsom}, ${\rm Res}(\Phi,\Psi)\cong (\l^{N'}_z, -^{\tau}\l^{N'}_z)$.
 
Since $(\Phi,\Psi)$ is reduced, $N'$ contains no $B^{\#}$-free direct summand. The conclusion now follows from (2).
\end{proof}

Figure \ref{fig:ResC} below summarizes the functors we have defined. Recall that $\mathscr A$ and $\mathscr B$ are inverse equivalences. The functor ${\rm Res}$ is not an inverse to $\Cscr$. The next lemma explains the relationship between the two functors.

\begin{figure}[h]
\centerline{
\xymatrix{
TMF_{A[z;\sqrt{\sigma}]}(f+z^2)\ar@{->}@<2.5pt>[rr]^-{\rm Res}\ar@{->}[dd]_{\textsc{coker}} & & TMF_{A}(f)\ar@{->}@<2.5pt>[dl]^{\mathscr B}\ar@{->}@<2.5pt>[ll]^{\hspace{.35in}\Cscr}\ar@{->}[dd]^{\textsc{coker}}\\
& MCM_{\z}(B^{\#})\ar@{->}[ld]_{\text{Forget}}\ar@{->}@<2.5pt>[ur]^{\mathscr A}&\\
MCM(B^{\#})\ar@{->}[rr]^-{\rm res}& & MCM(B)
}}
\caption{Combination of Figures~1 and~2 (commutative up to equivalence)}\label{fig:ResC}
\end{figure}

\begin{lemma}
\label{upDown} \ 
\begin{enumerate}
\item If $(\varphi,\psi)\in TMF(f)$, then ${\rm Res}\;\mathscr C(\varphi,\psi) \cong {^{\t}}T(\varphi,\psi) \oplus {^{\t}}(\varphi,\psi)$.
\item If $(\Phi,\Psi)\in TMF(f+z^2)$ is reduced, then $$\mathscr C\;{\rm Res}(\Phi,\Psi)\cong {^{\t}}(\Phi,\Psi) \oplus {^{\t}}T(\Phi,\Psi).$$
\end{enumerate}
\end{lemma}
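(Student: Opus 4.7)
For part~(1), I would compute both sides directly from the matrix definition of $\Cscr$ in Definition~\ref{def:C}. Applying $\mathrm{Res}$ to $\Phi_{\Cscr}=\left(\begin{smallmatrix}\overline\psi & -\lambda_z\\ \lambda_z & {^{\tau}}\overline\varphi\end{smallmatrix}\right)$ kills every $\lambda_z$ entry (since $z\equiv 0\pmod z$) and returns $\overline\psi,{^{\tau}}\overline\varphi$ to $\psi,{^{\tau}}\varphi$; the analogous calculation for $\Psi_{\Cscr}$ yields the same block structure. The result is block-diagonal, hence a direct sum of two twisted matrix factorizations of $f$. A short check identifies the diagonal blocks as $(\psi,{^{\tw}}\varphi)$ and $({^{\tau}}\varphi,{^{\tau}}\psi)$, which unwind using $T(\varphi,\psi)=({^{\tau^{-1}}}\psi,{^{\tau}}\varphi)$ to ${^{\tau}}T(\varphi,\psi)$ and ${^{\tau}}(\varphi,\psi)$ respectively.

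For part~(2), my plan is to invoke Proposition~\ref{reducedIsom}: it suffices to show both sides of the claimed isomorphism are reduced with isomorphic cokernels as $B^{\#}$-modules. Reducedness on the right is routine, as ${^{\tau}}(-)$ and $T$ each preserve the reduced condition (the latter because $T$ is the composition of ${^{\tau^{-1}}}(-)$ with the self-inverse swap $(\varphi,\psi)\mapsto(\psi,{^{\tw}}\varphi)$); on the left, $\mathrm{Res}(\Phi,\Psi)$ is reduced by Lemma~\ref{resSymm} and $\Cscr$ preserves reducedness through Proposition~\ref{commDiag1}. Write $N=\coker\Phi$. Combining Proposition~\ref{commDiag1} with Lemma~\ref{resSymm} gives $\textsc{coker}\;\Cscr\,\mathrm{Res}(\Phi,\Psi)\cong \mathscr B(\lambda_z^N,-{^{\tau}}\lambda_z^N)^{\circ}$; unpacking $\mathscr B$ yields the $A$-module $^{\tau}N\oplus{^{\tau}}N$ with $z$-action $(x,y)\mapsto(zy,zx)$, and the diagonalizing change of basis $(x,y)\mapsto(x+y,x-y)$ (valid in characteristic zero) splits this as $B^{\#}$-modules into ${^{\tau}}N\oplus{^{\tau\zeta}}N$.

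On the right, $\textsc{coker}\bigl({^{\tau}}(\Phi,\Psi)\oplus{^{\tau}}T(\Phi,\Psi)\bigr)={^{\tau}}N\oplus\coker\Psi$, so it remains to identify $\coker\Psi\cong{^{\tau\zeta}}N$. I would replace $(\Phi,\Psi)$ with the equivalent reduced factorization $(\Delta,\Sigma)$ from Lemma~\ref{factoringN}. The image of $\Sigma$ in ${^{\tau}}F$ is generated by the elements $z\otimes n+1\otimes zn$, imposing exactly the relation that forces the left $z$-action inherited from $A[z;\sqrt\sigma]$ to be the negative of the intrinsic $z$-action on $N$; this identifies $F/\im\Sigma\cong{^{\zeta}}N$ via $a\otimes n\mapsto \zeta(a)\,n$, and passing through the $^{\tau}$-twist on the codomain of $\Sigma$ yields $\coker\Sigma\cong{^{\tau\zeta}}N$. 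With both sides of the claim now having cokernel ${^{\tau}}N\oplus{^{\tau\zeta}}N$, Proposition~\ref{reducedIsom} concludes. The main obstacle is bookkeeping the interacting twists ${^{\tau}}(-)$, ${^{\zeta}}(-)$ and ${^{\tw}}(-)$, especially in verifying the identification $\coker\Sigma\cong{^{\tau\zeta}}N$ while tracking the $\tau$-twist built into the codomain of $\Sigma$ and the $\zeta$-sign that distinguishes $\Sigma$ from $\Delta$ in Lemma~\ref{factoringN}.
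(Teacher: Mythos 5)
Your part (1) is exactly the paper's argument: apply $\mathrm{Res}$ to the explicit matrices of $\Cscr(\varphi,\psi)$, observe that the $\lambda_z$ entries die, and read off the block-diagonal summands $(\psi,{^{\tw}}\varphi)={^{\t}}T(\varphi,\psi)$ and $({^{\t}}\varphi,{^{\t}}\psi)={^{\t}}(\varphi,\psi)$.

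For part (2) you take a genuinely (if mildly) different route. The paper also begins with Lemma~\ref{resSymm} to replace $\mathrm{Res}(\Phi,\Psi)$ by $(\lambda_z^N,-{^{\t}}\lambda_z^N)$, but then stays at the level of factorizations: it diagonalizes the $2\times 2$ matrices of $\Cscr(\lambda_z^N,-{^{\t}}\lambda_z^N)$ by the explicit change of basis $\left(\begin{smallmatrix}1&-1\\1&1\end{smallmatrix}\right)$, recognizes the two diagonal blocks as ${^{\t}}(\Delta,\Sigma)$ and ${^{\t}}T(\Delta,\Sigma)$ with $(\Delta,\Sigma)$ the factorization of Lemma~\ref{factoringN}, and only then invokes Proposition~\ref{reducedIsom} once, to identify $(\Delta,\Sigma)\cong(\Phi,\Psi)$. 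You instead push everything down to cokernels via Proposition~\ref{commDiag1} and apply Proposition~\ref{reducedIsom} to the two direct sums; your diagonalizing substitution $(x,y)\mapsto(x+y,x-y)$ on $\mathscr B(\lambda_z^N,-{^{\t}}\lambda_z^N)^{\circ}$ is the same linear algebra as the paper's change of basis, just performed on the module rather than on the factorization. Both routes are built from the same lemmas and both work; what the paper's version buys is that it never has to verify that $\Cscr\,\mathrm{Res}(\Phi,\Psi)$ is reduced, nor to compute $\coker\Psi$ separately. Your version needs both, and these are the two places to tighten: (i) "$\Cscr$ preserves reducedness" does not really follow from Proposition~\ref{commDiag1} alone --- the clean argument is that all entries of $\Phi_{\Cscr}$ and $\Psi_{\Cscr}$ lie in the augmentation ideal of $A[z;\sqrt{\s}]$ whenever those of $(\varphi,\psi)$ lie in $A_+$, so no trivial summand can split off; and (ii) the identification $\coker\Sigma\cong{^{\t\z}}N$ deserves the explicit check you sketch, including the observation that ${^{\t}}(-)$ and ${^{\z}}(-)$ commute because $\sqrt{\s}(z)=z$ and $\z|_A=\mathrm{id}$. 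With those two points written out, your argument is complete.
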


\begin{proof}
For (1), we have
$$\mathscr C(\varphi,\psi) =\left( \begin{pmatrix} \overline\psi & -\lambda_z^{{^{\t}}\Gbar}\\ \lambda^{\Fbar}_z & {^{\t}}\overline\varphi\end{pmatrix},
\begin{pmatrix} {^{\tw}}\overline\varphi & \lambda_z^{{^{\t}}\Fbar}\\ -\lambda^{{^{\tw}}\Gbar}_z & {^{\t}}\overline\psi\end{pmatrix}\right).$$
Hence
$${\rm Res}\;\mathscr C(\varphi,\psi) =\left( \begin{pmatrix} \psi & 0\\ 0 & {^{\t}}\varphi\end{pmatrix},
\begin{pmatrix} {^{\tw}}\varphi & 0\\ 0 & {^{\t}}\psi\end{pmatrix}\right)$$
as desired.

For (2), let $(\Phi,\Psi)\in TMF(f+z^2)$ be reduced. Let $N=\coker\Phi$ and let $\l^N_z: {^{\tau}N}\to N$ be left multiplication by $z$. By Lemma \ref{resSymm}, ${\rm Res}(\Phi,\Psi)\cong (\l^N_z,-{^{\t}}\l^N_z)$, and these are reduced, symmetric graded matrix factorizations of $f$.

We have
\begin{align*}
\Cscr{\rm Res}(\Phi,\Psi)&\cong \mathscr C(\l^N_z,-{^{\t}}\l^N_z)\\
 &=\left( \begin{pmatrix} -{^{\t}}\overline{\l^N_z} & -\lambda_z^{{^{\t}}\overline N}\\ \lambda^{{^{\t}}\overline N}_z & {^{\t}}\overline{\l^N_z}\end{pmatrix},
\begin{pmatrix} {^{\tw}}\overline{\l^N_z} & \lambda_z^{{^{\t}}\overline N}\\ -\lambda^{{^{\tw}}\overline N}_z & -{^{\tw}}\overline{\l^N_z}\end{pmatrix}\right)\\
&\cong \left( \begin{pmatrix} {^{\t}}(\lambda_z^{\overline N}-\overline{\l^N_z} )& 0\\ 0 & {^{\t}}(\lambda_z^{\overline N}+\overline{\l^N_z})\\ \end{pmatrix}, \begin{pmatrix} {^{\tw}}(\lambda_z^{\overline N}+\overline{\l^N_z} ) & 0\\ 0 & {^{\tw}}(\lambda_z^{\overline N}-\overline{\l^N_z})\\ \end{pmatrix}\right)
\end{align*}
via the isomorphism $(\alpha,\beta)$ where 
$$\alpha = \begin{pmatrix} 1 & -1\\ 1 & 1\\ \end{pmatrix} \quad \text{ and } \quad \beta = \begin{pmatrix} 1 & 1\\ -1 & 1\\ \end{pmatrix}.$$ Note that these matrices define $A$-module isomorphisms since ${\rm char}\;\Bbbk \neq 2$. Next, we observe that  $({^{\t}}(\lambda_z^{\overline N}-\overline{\l^N_z} ), {^{\tw}}(\lambda_z^{\overline N}+\overline{\l^N_z} ) ) = {^{\tau}(\Delta,\Sigma)}$ where $(\Delta,\Sigma)$ is the factoriztion of Lemma \ref{factoringN}. Thus we have shown
$$\Cscr{\rm Res}(\Phi,\Psi)\cong \mathscr C(\l^N_z,-{^{\t}}\l^N_z)\cong {^{\t}}(\Delta,\Sigma) \oplus {^{\t}}T(\Delta,\Sigma).  $$
The factorization $(\Delta,\Sigma)$ is reduced and $\coker \Delta = N$. Since $(\Phi,\Psi)$ is reduced, $N$ has no $B^{\#}$-free direct summand, hence the factorization $(\Delta,\Sigma)$ is also reduced and is isomorphic to $(\Phi,\Psi)$ by Proposition \ref{reducedIsom}. This establishes the desired decomposition.
\end{proof}

Recall from \cite[Definition 5.2]{CKWZ2} that a noetherian, bounded below, locally finite graded algebra is said to have \emph{finite Cohen-Macaulay (CM) type} if it has (up to degree shift) only finitely many isomorphism classes of indecomposable MCM modules. The following important result shows that finite CM type is preserved when constructing the double branched cover. Note that we do not claim that $B$ and $B^{\#}$ have the same number of isomorphism classes of indecomposable MCM modules, but see Corollary \ref{corKP}.

\begin{thm} \label{fcmtPreserved}
In the context of Notations \ref{not:Rf} and \ref{def:dbc}, the algebra $B$ has finite Cohen-Macaulay type if and only if $B^\#$ has finite
Cohen-Macaulay type.
\end{thm}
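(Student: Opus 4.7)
The plan is to translate the statement to one about finiteness of isomorphism classes of reduced indecomposable twisted matrix factorizations. By Theorems~\ref{TMFequiv} and~\ref{KS} together with Proposition~\ref{reducedIsom}, $B$ has finite CM type precisely when, up to degree shift, only finitely many isomorphism classes of reduced indecomposables occur in $TMF_A(f)$, and similarly $B^\#$ has finite CM type precisely when the same holds in $TMF_{A[z;\sqrt\sigma]}(f+z^2)$. The proof will be symmetric: the forward implication is driven by Lemma~\ref{upDown}(2) and the reverse by Lemma~\ref{upDown}(1), with Theorem~\ref{KS} providing the matching of summands.

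For the forward direction, assume $B$ has finite CM type and let $\Xi_1,\ldots,\Xi_n$ be shift-class representatives of the reduced indecomposables in $TMF_A(f)$. Using Proposition~\ref{TMFdecomp} and Theorem~\ref{KS}, each $\mathscr{C}(\Xi_i)$ has only finitely many reduced indecomposable summands, call them $\Theta_{i,k}$; these will furnish the shift-class representatives for $B^\#$. Given any reduced indecomposable $(\Phi,\Psi)\in TMF_{A[z;\sqrt\sigma]}(f+z^2)$, Lemma~\ref{resSymm} shows $\mathrm{Res}(\Phi,\Psi)$ is reduced, so Theorem~\ref{KS} writes it as a finite sum of degree shifts of the $\Xi_i$. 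Then, since $\mathscr{C}$ is additive and (as one checks directly from Definition~\ref{def:C}) commutes with degree shifts, $\mathscr{C}\,\mathrm{Res}(\Phi,\Psi)$ is a finite direct sum of shifts of the $\mathscr{C}(\Xi_i)$'s. Combining this with Lemma~\ref{upDown}(2), which presents the same object as ${}^\tau(\Phi,\Psi)\oplus{}^\tau T(\Phi,\Psi)$, a Krull-Schmidt comparison forces ${}^\tau(\Phi,\Psi)$ to be isomorphic to a shift of some $\Theta_{i,k}$. Since ${}^\tau$ is an autoequivalence, $(\Phi,\Psi)$ lies in one of the finitely many shift-classes $[{}^{\tau^{-1}}\Theta_{i,k}]$.

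The reverse direction mirrors this. Given a finite shift-class list $\Pi_1,\ldots,\Pi_m$ of reduced indecomposables of $TMF_{A[z;\sqrt\sigma]}(f+z^2)$ and a reduced indecomposable $(\varphi,\psi)\in TMF_A(f)$, I would decompose $\mathscr{C}(\varphi,\psi)$ as reduced $\oplus$ trivial via Proposition~\ref{TMFdecomp} and write the reduced part as a finite sum of shifts of the $\Pi_j$. The key compatibility is that $\mathrm{Res}$ sends trivial factorizations of $f+z^2$ to trivial factorizations of $f$, since left multiplication by $f+z^2$ reduces mod $z$ to left multiplication by $f$. Because Lemma~\ref{upDown}(1) shows the full restriction ${}^\tau T(\varphi,\psi)\oplus{}^\tau(\varphi,\psi)$ is reduced, uniqueness in Theorem~\ref{KS} forces the trivial contribution to vanish, placing ${}^\tau(\varphi,\psi)$ as a summand of a finite direct sum of shifts of the $\mathrm{Res}(\Pi_j)$'s; hence $(\varphi,\psi)$ lies in one of finitely many shift-classes, as desired.

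The main obstacle I anticipate is bookkeeping rather than any deep new idea: one must verify that $\mathscr{C}$ and $\mathrm{Res}$ are additive and commute with degree shifts, that ${}^\tau$ preserves reducedness and indecomposability (so that applying it does not destroy the Krull-Schmidt comparison), and that the decomposition of Proposition~\ref{TMFdecomp} behaves well under $\mathrm{Res}$. Once these mild compatibilities are in place, the finiteness transfer in both directions follows at once from Theorem~\ref{KS}.
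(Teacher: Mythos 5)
Your proposal is correct and follows essentially the same route as the paper: finiteness is transferred between $TMF_A(f)$ and $TMF_{A[z;\sqrt{\sigma}]}(f+z^2)$ by decomposing the images of a finite list of indecomposables under $\mathscr{C}$ (resp.\ ${\rm Res}$), and then using Lemma~\ref{upDown} to exhibit an arbitrary indecomposable as a direct summand of $\mathscr{C}\,{\rm Res}$ (resp.\ ${\rm Res}\,\mathscr{C}$) applied to it, with Theorem~\ref{KS} matching summands. The paper's proof is terser (it only writes out the forward direction and notes the converse is similar), while you spell out the shift and reduced/trivial bookkeeping explicitly; the substance is the same.
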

\begin{proof}
It is enough to prove that $TMF(f)$ has finite representation type
if and only if $TMF(f + z^2)$ does as well.  Suppose that 
$(\phi_1,\psi_1),\dots,(\phi_s,\psi_s)$ is a complete list of 
indecomposable twisted matrix factorizations.  For each $i$,
decompose $\mathscr C(\phi_i,\psi_i)$ as a direct sum of twisted
matrix factorizations of $f + z^2$, say $(\Phi_{i1},\Psi_{i1}),\dots,
(\Phi_{in_i},\Psi_{in_i})$.  Now let $(\Phi,\Psi)$ be an arbitrary
indecomposable matrix factorization of $f + z^2$.  By Lemma \ref{upDown}.(2),
$(\Phi,\Psi)$ is a direct summand of
$\mathscr{C}{\rm Res}({}^{\tau^{-1}}(\Phi,\Psi))$, hence by  Theorem~\ref{KS} it must belong to the set $\{(\Phi_{ij},\Psi_{ij})\}$.
The proof of the other direction is similar.
\end{proof}

We are ready to describe what happens to indecomposable twisted matrix factorizations under the functors ${\rm Res}$ and $\Cscr$. These are referred to as the ``going-up'' and ``going-down'' properties of the double branched cover.

\begin{prop}
 \label{decomposing}
\begin{enumerate}
\item[\textnormal{(1)}]  Let $(\varphi,\psi)\in TMF(f)$ be indecomposable and nontrivial. Then $\mathscr C(\varphi,\psi)$ is decomposable if and only if $(\varphi,\psi)$ is symmetric. In this case, $$\mathscr C(\varphi, \psi)\cong (\Phi', \Psi')\oplus T(\Phi', \Psi'),$$ for a factorization $(\Phi', \Psi')\in TMF(f+z^2)$ that is indecomposable and asymmetric. 
\smallskip

\item[\textnormal{(2)}]  Let $(\Phi,\Psi)\in TMF(f+z^2)$ be indecomposable and nontrivial.
  One then has that ${\rm Res}(\Phi,\Psi)$ is decomposable if and only if $(\Phi,\Psi)$ is symmetric.
  In this case, 
  $${\rm Res}(\Phi, \Psi)\cong (\varphi',\psi')\oplus T(\varphi',\psi'),$$
  for a factorization $(\varphi',\psi')\in TMF(f)$ that is indecomposable and asymmetric.
\end{enumerate}
\end{prop}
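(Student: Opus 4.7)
My plan is to prove both parts by a coordinated strategy combining Lemma~\ref{upDown} with Krull--Schmidt counting (Theorem~\ref{KS}), using the symmetry constraints from Lemma~\ref{resSymm} and Theorem~\ref{imageOfC}. The ``asymmetric $\Rightarrow$ indecomposable'' directions of (1) and (2) are handled purely abstractly, while the ``symmetric $\Rightarrow$ decomposable'' directions rely on an explicit diagonalization enabled by Proposition~\ref{symmetricRoot}.

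For the asymmetric direction of (1), suppose $(\varphi,\psi)$ is indecomposable nontrivial and asymmetric, and decompose $\mathscr{C}(\varphi,\psi) = \bigoplus_i (\Phi_i,\Psi_i)$ into indecomposables. Applying ${\rm Res}$ and invoking Lemma~\ref{upDown}(1) yields $\bigoplus_i {\rm Res}(\Phi_i,\Psi_i) \cong {^{\t}}(\varphi,\psi) \oplus {^{\t}}T(\varphi,\psi)$, a direct sum of two non-isomorphic indecomposables forming a single $T$-orbit. Lemma~\ref{resSymm} makes each ${\rm Res}(\Phi_i,\Psi_i)$ symmetric, so its set of indecomposable summands is $T$-closed, and thus both ${^{\t}}(\varphi,\psi)$ and ${^{\t}}T(\varphi,\psi)$ occur in a single ${\rm Res}(\Phi_{i_0},\Psi_{i_0})$. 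The other ${\rm Res}(\Phi_i,\Psi_i)$ vanish, and since ${\rm Res}$ of a nonzero factorization is nonzero, $(\Phi_i,\Psi_i) = 0$ for $i \neq i_0$, so $\mathscr{C}(\varphi,\psi)$ is indecomposable. The asymmetric direction of (2) is parallel, interchanging the roles of $\mathscr{C}$ and ${\rm Res}$, using Lemma~\ref{upDown}(2) in place of (1), and using that each $\mathscr{C}(\varphi_j,\psi_j)$ is symmetric by Theorem~\ref{imageOfC}.

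The symmetric direction of both parts rests on the following key claim: \emph{if $(\varphi,\psi)$ is indecomposable nontrivial symmetric, then $\mathscr{C}(\varphi,\psi)$ decomposes as a direct sum of two nonzero factorizations.} I will establish this via Proposition~\ref{symmetricRoot}, which provides $(\varphi,\psi)\cong(\varphi_0,{^{\t}}\varphi_0)$ with $\varphi_0\colon F \to {^{\t^{-1}}}F$. Unpacking Definition~\ref{def:C} under this presentation, the map $\Phi_{\Cscr}$ acquires the $2\times 2$ block form $\left(\begin{smallmatrix}{^{\t}}\overline{\varphi_0} & -\lambda_z^{\overline F} \\ \lambda_z^{\overline F} & {^{\t}}\overline{\varphi_0}\end{smallmatrix}\right)$ between $2$-fold direct sums of the same graded module. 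Since $\k$ is algebraically closed of characteristic $0$, pick $i \in \k$ with $i^2 = -1$; changing basis by the scalar matrix $P = \left(\begin{smallmatrix}1 & 1 \\ i & -i\end{smallmatrix}\right) \in GL_2(\k)$---automatically a graded $A[z;\sqrt{\sigma}]$-linear automorphism of the $2$-fold direct sums---block-diagonalizes $\Phi_{\Cscr}$ to $\left(\begin{smallmatrix}{^{\t}}\overline{\varphi_0} - i\lambda_z^{\overline F} & 0 \\ 0 & {^{\t}}\overline{\varphi_0} + i\lambda_z^{\overline F}\end{smallmatrix}\right)$. A parallel calculation block-diagonalizes $\Psi_{\Cscr}$ under the same change of basis, yielding $\mathscr{C}(\varphi,\psi) \cong (\Phi^+,\Psi^+) \oplus (\Phi^-,\Psi^-)$ with both summands nonzero.

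Granting the key claim, both symmetric directions follow. For (1), the two summands $(\Phi^\pm,\Psi^\pm)$ are indecomposable by the Krull--Schmidt count applied to ${\rm Res}\,\mathscr{C}(\varphi,\psi) \cong {^{\t}}(\varphi,\psi)^{\oplus 2}$ (each contributes at least one indecomposable summand, and they sum to exactly two); they are $T$-partners because $T\mathscr{C}(\varphi,\psi)\cong\mathscr{C}(\varphi,\psi)$ block-diagonalizes under the same $P$ with the two diagonal entries swapped; and each is asymmetric, because otherwise Theorem~\ref{imageOfC} would write $(\Phi^+,\Psi^+)\cong\mathscr{C}(\varphi'',\psi'')$ for some indecomposable $(\varphi'',\psi'')$ and Lemma~\ref{upDown}(1) would give ${\rm Res}(\Phi^+,\Psi^+)\cong {^{\t}}(\varphi'',\psi'')\oplus {^{\t}}T(\varphi'',\psi'')$ as a sum of two nonzero summands, contradicting the indecomposability ${\rm Res}(\Phi^+,\Psi^+)\cong {^{\t}}(\varphi,\psi)$ from the Krull--Schmidt count. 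For (2), Theorem~\ref{imageOfC} writes $(\Phi,\Psi)\cong\mathscr{C}(\varphi,\psi)$ with $(\varphi,\psi)$ indecomposable; the key claim forces $(\varphi,\psi)$ asymmetric (else $\mathscr{C}(\varphi,\psi)=(\Phi,\Psi)$ would decompose); and Lemma~\ref{upDown}(1) then gives ${\rm Res}(\Phi,\Psi)\cong(\varphi',\psi')\oplus T(\varphi',\psi')$ with $(\varphi',\psi') = {^{\t}}(\varphi,\psi)$ indecomposable asymmetric. The main obstacle throughout is the explicit matrix diagonalization: tracking the twist functors ${^{\t}}(-)$ and ${^{\tw}}(-)$ through the change of basis and verifying that $\Psi_{\Cscr}$ diagonalizes under the same change of basis as $\Phi_{\Cscr}$ both require careful bookkeeping of domains and codomains.
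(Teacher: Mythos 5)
Your proposal is correct and follows essentially the same route as the paper: Proposition~\ref{symmetricRoot} plus an explicit scalar change of basis (the paper uses $\left(\begin{smallmatrix}1 & i\\ i & 1\end{smallmatrix}\right)$ rather than your $\left(\begin{smallmatrix}1 & 1\\ i & -i\end{smallmatrix}\right)$, to the same effect) for the symmetric-implies-decomposable direction, and Lemma~\ref{upDown} together with Krull--Schmidt counting, Lemma~\ref{resSymm}, and Theorem~\ref{imageOfC} for the converse and for identifying the summands. The only cosmetic difference is that you prove ``asymmetric $\Rightarrow$ indecomposable'' directly via the $T$-orbit-closure count, while the paper proves the contrapositive ``decomposable $\Rightarrow$ symmetric''; in both versions one should note (as the paper does implicitly) that the indecomposable summands of $\mathscr C(\varphi,\psi)$, resp.\ ${\rm Res}(\Phi,\Psi)$, are nontrivial, hence reduced, before invoking Lemma~\ref{resSymm}, resp.\ Theorem~\ref{imageOfC}.
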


\begin{proof}
We first prove the decomposability statements in each part, then go back and characterize the summands.

Let $(\varphi,\psi)\in TMF(f)$ be indecomposable and nontrivial. If $(\varphi,\psi)$ is symmetric, then by Proposition~\ref{symmetricRoot} we may assume  $\psi={^{\t}}\varphi$ and $\varphi{^{\t}}\varphi=\lambda_f^{^{\t^{-1}}F}$. Then 
\begin{align*}
\mathscr C(\varphi,{^{\t}}\varphi)&=\left(\begin{pmatrix} {^{\t}}\overline\varphi & -\lambda_z^{{^{\t}}\Fbar}\\ \lambda^{{^{\t}}\Fbar}_z & {^{\t}}\overline\varphi\end{pmatrix},
\begin{pmatrix} {^{\tw}}\overline\varphi & \lambda_z^{{^{\tw}}\Fbar}\\ -\lambda^{{^{\tw}}\Fbar}_z & {^{\tw}}\overline\varphi\end{pmatrix}\right)\\
&\cong \left( \begin{pmatrix} {^{\t}}(\overline\varphi - i\lambda_z) & 0\\ 0 & {^{\t}}(\overline\varphi+i\lambda_z)\\ \end{pmatrix}, \begin{pmatrix} {^{\tw}}(\overline\varphi + i\lambda_z) & 0\\ 0 & {^{\tw}}(\overline\varphi-i\lambda_z)\\ \end{pmatrix}\right)
\end{align*}
via the isomorphism $(\alpha,\beta)$ where both $\alpha$ and $\beta$ are given by the matrix {\footnotesize $\begin{pmatrix} 1 & i\\ i & 1\\ \end{pmatrix}$}. Putting $\Phi' = {^{\t}}(\overline\varphi - i\lambda^{{^{\t}}\Fbar}_z)$ and $\Psi' = {^{\tw}}(\overline\varphi + i\lambda^{{^{\t}}\Fbar}_z)$ we have $$\mathscr C(\varphi,\psi) \cong (\Phi',\Psi')\oplus T(\Phi',\Psi').$$

Conversely, suppose $\mathscr C(\varphi,\psi)=(\Phi',\Psi')\oplus (\Phi'',\Psi'')$. Then
$${\rm Res}(\Phi',\Psi')\oplus {\rm Res}(\Phi'',\Psi'')\cong {^{\t}}(\varphi,\psi)\oplus {^{\t}}T(\varphi,\psi)$$
by Lemma \ref{upDown}(1). Since $(\varphi,\psi)$ is indecomposable, by Corollary \ref{KS} and Proposition~\ref{reducedIsom} we may assume ${\rm Res}(\Phi',\Psi')\cong {^{\t}}(\varphi,\psi)$. Since ${^{\t}}(\varphi,\psi)$ is nontrivial, $(\Phi',\Psi')$ is reduced. By Lemma \ref{resSymm}, ${^{\t}}(\varphi,\psi)$ is symmetric.

For (2), let $(\Phi,\Psi)\in TMF(f+z^2)$ be indecomposable and nontrivial. Then in particular $(\Phi,\Psi)$ is reduced. By Theorem~\ref{imageOfC}, if $(\Phi,\Psi)$ is symmetric, then $(\Phi,\Psi)\cong\mathscr C(\varphi,\psi)$ for some $(\varphi,\psi)\in TMF(f)$, hence ${\rm Res}(\Phi,\Psi)\cong {^{\t}}(\varphi,\psi)\oplus {^{\t}}T(\varphi,\psi)$ by Lemma \ref{upDown}(2). 

Conversely, suppose ${\rm Res}(\Phi,\Psi)=(\varphi,\psi)\oplus (\varphi',\psi')$. Then we have
$\mathscr C{\rm Res}(\Phi,\Psi)={^{\t}}(\Phi,\Psi)\oplus {^{\t}}T(\Phi,\Psi)$ by Lemma \ref{upDown}(2). Arguing as above, we may assume $\mathscr C(\varphi,\psi)={^{\t}}(\Phi,\Psi)$, so ${^{\t}}(\Phi,\Psi)$ is symmetric by Theorem~\ref{imageOfC}.

To complete the proof of (1), we assume $(\varphi, \psi)$ is symmetric. By the calculation above, we have $\mathscr C(\varphi,\psi)\cong (\Phi',\Psi')\oplus T(\Phi',\Psi')$. By Lemma \ref{upDown}(1) ${\rm Res}\mathscr C(\varphi,\psi)\cong {^{\t}}(\varphi,\psi)\oplus {^{\t}}T(\varphi,\psi)$. Since this is a sum of exactly two indecomposables, ${\rm Res}(\Phi',\Psi')$ is indecomposable, hence $(\Phi',\Psi')$ is asymmetric by the first part of (2).

To complete the proof of (2), we assume $(\Phi,\Psi)$ is symmetric. As argued above, we have ${\rm Res}(\Phi,\Psi)\cong (\varphi,\psi)\oplus T(\varphi,\psi)$. By Lemma \ref{upDown}(2) we have $\mathscr C{\rm Res}(\Phi,\Psi)\cong {^{\t}}(\Phi,\Psi)\oplus {^{\t}}T(\Phi,\Psi)$. Since this is a sum of exactly two indecomposables, $\mathscr C(\varphi,\psi)$ is indecomposable, hence $(\varphi,\psi)$ is asymmetric by the first part of (1).
\end{proof}

The stable categories of MCM modules over $B$ and $B^{\#}$ are not equivalent in general, even when $B$ is a quotient of a commutative polynomial ring. In the setting of complete hypersurface singularities, Kn\"{o}rrer's Periodicity Theorem \cite[Theorem 3.1]{knor} gives an equivalence between the stable category of MCM modules over $\C[[x_1,\ldots, x_n]]/(f)$ and the stable category of MCM modules over the \emph{second} double branched cover. Towards a noncommutative version of that theorem, we make the following definition. 

\begin{defnotn}[$(B^\#)^\#$] \label{def:ddbc}
Recall that we extend $\sqrt{\s}$ to all of $A[z;\sqrt{\s}]$ by requiring $\sqrt{\s}(z)=z$. The {\it second double branched cover} of $B$ is the quotient $$(B^{\#})^{\#}=A[z;\sqrt{\s}][w;\sqrt{\s}]/(f+z^2+w^2).$$ 
of the iterated Ore extension $A[z;\sqrt{\s}][w;\sqrt{\s}]$. We extend $\sqrt{\s}$ to $A[z;\sqrt{\s}][w;\sqrt{\s}]$ by defining $\sqrt{\s}(w)=w$.
\end{defnotn}

\begin{rmk} \label{rmk:chgvar}
As in the classical case, it is convenient to consider a linear change of variables. Setting $u=z+iw$ and $v=z-iw$ induces an isomorphism $$(B^{\#})^{\#}\cong A[u;\sqrt{\s}][v;\sqrt{\s}]/(f+uv).$$
Here, $i$ is the square root of $-1$ in $\Bbbk$ and $\sqrt{\s}$ acts as the identity on $u$ and $v$.
\end{rmk}

By iterating the functors $\Cscr$ and ${\rm Res}$, we can move between categories of twisted matrix factorizations of $f$ and those of $f+z^2+w^2$. To distinguish the two steps in this process we introduce the following notation.

\begin{notn}[$\mathscr C_1$, $\mathscr C_2$, ${\rm Res}_1$, ${\rm Res}_2$]
Let $\mathscr C_1:TMF(f)\to TMF(f+z^2)$ be the functor $\mathscr C$ given in Definition~\ref{def:C} and let $\mathscr C_2:TMF(f+z^2)\to TMF(f+z^2+w^2)$ be the analog replacing $f$ by $f+z^2$. Let ${\rm Res}_1:TMF(f+z^2)\to TMF(f)$ and ${\rm Res}_2:TMF(f+z^2+w^2)\to TMF(f+z^2)$ be the corresponding restriction functors. 
\end{notn}

Finally, we define a functor that takes a twisted matrix factorization of $f$ and produces a twisted matrix factorzation of $f+uv$ directly, rather than by iterating the $\Cscr$ construction.  

\begin{defn}[$\mathscr H$, $\ol{\ol{\ast}}$\;] \label{not:H}
Define $\mathscr H:TMF(f)\to TMF(f+uv)$ by 
$$\mathscr H(\varphi,\psi) = \left( \begin{pmatrix} {^{\tw}}\ol{\ol\varphi} & -\lambda_v^{{^{\t}}\ol{\Fbar}}\\ \lambda_u^{{^{\tw}}\ol{\Gbar}} & {^{\t}}\ol{\ol\psi}\\ \end{pmatrix}, 
\begin{pmatrix} {^{\tw}}\ol{\ol\psi} & \lambda_v^{{^{\t^3}}\ol{\Gbar}}\\ -\lambda_u^{{^{\tw}}\ol{\Fbar}} & {^{\t^3}}\ol{\ol\varphi}\\ \end{pmatrix}\right)$$
where the double bar denotes the extension of scalars  $A[u;\sqrt{\s}][v;\sqrt{\s}]\tsr_A -$. 
\end{defn}

Via the change of variables in Remark~\ref{rmk:chgvar}, the functor $\ol{\ol{\ast}}$ is isomorphic to the iterated extension of scalars $$A[z;\sqrt{\s}][w;\sqrt{\s}]\tsr_{A[z;\sqrt{\s}]}(A[z;\sqrt{\s}]\tsr_A -),$$ each iteration of which was previously denoted by a single bar. Henceforth we use these two types of ``double bars'' interchangeably. In particular, we identify $\ol{\lambda_z^{\Fbar}}$ and $\lambda_z^{\ol{\Fbar}}$. For a morphism $(\alpha, \beta)$, put
$$\mathscr H(\alpha,\beta) = \left( \begin{pmatrix} {^{\t}}\ol{\ol\alpha} & 0\\ 0 & {^{\tw}}\ol{\ol\beta}\\ \end{pmatrix}, 
\begin{pmatrix} {^{\t}}\ol{\ol\beta} & 0\\ 0 & \ol{\ol\alpha}\\ \end{pmatrix}
\right).$$

\begin{lemma}
With the notations above, we obtain that
$$\mathscr C_2\circ \mathscr C_1 \cong \mathscr H\oplus T\mathscr H \qquad {\rm Res}_1\circ{\rm Res}_2\circ \mathscr H \cong {^{\tw}}({\rm id}\oplus T)\qquad T\circ \mathscr H\cong \mathscr H\circ T.$$
\end{lemma}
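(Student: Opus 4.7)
The proof consists in verifying each of the three natural isomorphisms directly from the definitions; (2) and (3) are routine and (1) is the main obstacle.

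The third identity $T \circ \mathscr H \cong \mathscr H \circ T$ I will dispatch first. Since $T(\varphi, \psi) = ({}^{\t^{-1}}\psi, {}^{\t}\varphi)$, unwinding the definition of $\mathscr H$ on both sides yields pairs of $2 \times 2$ block matrices having identical domains, codomains, and diagonal entries (for instance, the $\Phi$-component of each side has diagonal $({}^{\t}\ol{\ol\psi},\, {}^{\tw}\ol{\ol\varphi})$); they differ only in the signs of the off-diagonal entries $\pm\lambda_u,\, \pm\lambda_v$. The sign-flip $\alpha = \beta = \begin{pmatrix} -1 & 0 \\ 0 & 1 \end{pmatrix}$ applied on both domain and codomain of each component then furnishes an isomorphism of twisted matrix factorizations, and naturality in $(\alpha, \beta)$ follows directly from the rules defining $\mathscr H(\alpha,\beta)$ and $T(\alpha,\beta)$.

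For the second identity ${\rm Res}_1 \circ {\rm Res}_2 \circ \mathscr H \cong {}^{\tw}({\rm id} \oplus T)$, I will track the restriction functors under the change of variables of Remark~\ref{rmk:chgvar}. Modding out by $w$ sends the ideal $(w)$ to $(u - v)$, so ${\rm Res}_2$ identifies $\lambda_u$ with $\lambda_v$ (both become $\lambda_z$); subsequently ${\rm Res}_1$ mods out by $z$, forcing $\lambda_u = \lambda_v = 0$. Applied to $\Phi_{\mathscr H}$ and $\Psi_{\mathscr H}$ this annihilates all off-diagonal entries, reducing them to the diagonal matrices $\mathrm{diag}({}^{\tw}\varphi,\, {}^{\t}\psi)$ and $\mathrm{diag}({}^{\tw}\psi,\, {}^{\t^3}\varphi)$, while the diagonal $\ol{\ol{(\cdot)}}$-extensions of $\varphi$ and $\psi$ revert to $\varphi$ and $\psi$ themselves. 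A direct computation from $T(\varphi, \psi) = ({}^{\t^{-1}}\psi, {}^{\t}\varphi)$ gives ${}^{\tw}T(\varphi,\psi) = ({}^{\t}\psi, {}^{\t^3}\varphi)$, so that ${}^{\tw}({\rm id} \oplus T)(\varphi,\psi)$ has $\Phi$-part ${}^{\tw}\varphi \oplus {}^{\t}\psi$ and $\Psi$-part ${}^{\tw}\psi \oplus {}^{\t^3}\varphi$, matching exactly.

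The first identity $\mathscr C_2 \circ \mathscr C_1 \cong \mathscr H \oplus T\mathscr H$ is the main obstacle. The strategy is to expand $\mathscr C_2 \mathscr C_1(\varphi,\psi)$ into an explicit pair of $4 \times 4$ block matrices over $A[z;\sqrt{\s}][w;\sqrt{\s}]$, whose diagonal entries lie in $\{{}^{\tw}\ol{\ol\varphi},\, {}^{\t}\ol{\ol\psi},\, {}^{\t^3}\ol{\ol\varphi},\, {}^{\tw}\ol{\ol\psi}\}$ and whose off-diagonal entries are of the form $\pm\lambda_z,\, \pm\lambda_w$. Substituting $\lambda_z = (\lambda_u + \lambda_v)/2$ and $\lambda_w = (i/2)(\lambda_v - \lambda_u)$ from Remark~\ref{rmk:chgvar}, the key algebraic identity is the $2 \times 2$ conjugacy
\[ P^{-1} \begin{pmatrix} \lambda_u & 0 \\ 0 & \lambda_v \end{pmatrix} P \;=\; \begin{pmatrix} \lambda_z & -\lambda_w \\ \lambda_w & \lambda_z \end{pmatrix}, \qquad P = \begin{pmatrix} 1 & i \\ 1 & -i \end{pmatrix}. \]
An appropriate $4 \times 4$ change of basis, built from two copies of the above conjugation composed with a suitable permutation that groups the two summands carrying ${}^{\tw}\ol{\ol\varphi}$ (resp.\ ${}^{\t}\ol{\ol\psi}$) into a common block, block-diagonalizes $\Phi_{\mathscr C_2 \mathscr C_1}$ into two $2 \times 2$ blocks agreeing with $\Phi_{\mathscr H}$ and $\Phi_{T\mathscr H}$; the same transformation simultaneously handles $\Psi_{\mathscr C_2 \mathscr C_1}$. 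The principal difficulty lies in the bookkeeping: verifying that the change of basis respects the various twist functors ${}^{\t}(-),\, {}^{\tw}(-),\, {}^{\t^3}(-)$ applied to each of the four summands, so that it defines a legitimate graded $A[u;\sqrt{\s}][v;\sqrt{\s}]$-module isomorphism and preserves the matrix-factorization identities.
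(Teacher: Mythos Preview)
Your proposal is correct and follows essentially the same route as the paper: all three isomorphisms are established by writing out the block matrices and exhibiting an explicit change of basis, with the paper giving the concrete $4\times 4$ matrix $\left(\begin{smallmatrix} 1 & 0 & 0 & i\\ 0 & -1 & -i & 0\\ 0 & -i & -1 & 0\\ i & 0 & 0 & 1 \end{smallmatrix}\right)$ for (1) and $\left(\begin{smallmatrix} 1 & 0\\ 0 & -1\end{smallmatrix}\right)$ for (3). Your conceptual explanation for (1) via the $2\times 2$ diagonalization $P^{-1}\mathrm{diag}(\lambda_u,\lambda_v)P$ is a nice motivation for why such a matrix exists, but the underlying argument is the same direct verification.
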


\begin{proof} We exhibit the isomorphisms on objects only. Given these, it is not hard to verify the required isomorphisms on Hom spaces.

First,
\begin{align*}
&\mathscr C_2\;\mathscr C_1(\varphi,\psi)=
\mathscr C_2\left( \begin{pmatrix} \overline\psi & -\lambda_z^{{^{\t}}\Gbar}\\ \lambda^{\Fbar}_z & {^{\t}}\overline\varphi\end{pmatrix},
\begin{pmatrix} {^{\tw}}\overline\varphi & \lambda_z^{{^{\t}}\Fbar}\\ -\lambda^{{^{\tw}}\Gbar}_z & {^{\t}}\overline\psi\end{pmatrix}\right)\\
&\cong
\left( \begin{pmatrix} 
{^{\tw}}\ol{\ol\varphi} & \lambda_z^{\ol{{^{\t}}\Fbar}} & -\lambda_w^{{^{\t}}\ol{\Fbar}}&0\\ 
-\lambda^{{^{\tw}}\ol{\Gbar}}_z & {^{\t}}\ol{\ol\psi} & 0 & -\lambda_w^{{^{\tw}}\ol{\Gbar}}\\
\lambda_w^{{^{\tw}}\ol{\Gbar}} & 0 & {^{\t}}\ol{\ol\psi} & -\lambda_z^{{^{\tw}}\ol{\Gbar}}\\
0 & \lambda_w^{{^{\t}}\ol{\Fbar}} &\lambda^{{^{\t}}\ol{\Fbar}}_z & {^{\tw}}\ol{\ol\varphi}\\
\end{pmatrix},
\begin{pmatrix} 
{^{\tw}}\ol{\ol\psi} & -\lambda_z^{{^{\t^3}}\ol{\Gbar}} & \lambda_w^{{^{\t^3}}\ol{\Gbar}} & 0\\
\lambda_z^{{^{\tw}}\ol{\Fbar}} & {^{\t^3}}\ol{\ol\varphi} & 0 & \lambda_w^{{^{\tw}}\ol{\Fbar}} \\ 
-\lambda_w^{{^{\tw}}\ol{\Fbar}} & 0 & {^{\t^3}}\ol{\ol\varphi} & \lambda_z^{{^{\tw}}\ol{\Fbar}}\\
0 & -\lambda_w^{{^{\t^3}}\ol{\Gbar}} & -\lambda_z^{{^{\t^3}}\ol{\Gbar}} & {^{\tw}}\ol{\ol\psi}\\
\end{pmatrix}\right)\\
&\cong
\left( \begin{pmatrix} 
{^{\tw}}\ol{\ol\varphi} & -\lambda_v^{{^{\t}}\ol{\Fbar}} & 0 &0\\ 
-\lambda^{{^{\tw}}\ol{\Gbar}}_u & {^{\t}}\ol{\ol\psi} & 0 & 0\\
0 & 0 & {^{\t}}\ol{\ol\psi} & \lambda_v^{{^{\tw}}\ol{\Gbar}}\\
0 & 0 & -\lambda^{{^{\t}}\ol{\Fbar}}_u & {^{\tw}}\ol{\ol\varphi}\\
\end{pmatrix},
\begin{pmatrix} 
{^{\tw}}\ol{\ol\psi} & \lambda_v^{{^{\t^3}}\ol{\Gbar}} &0 & 0\\ 
-\lambda_u^{{^{\t^3}}\ol{\Fbar}} & {^{\t^3}}\ol{\ol\varphi} & 0 & 0\\
0 & 0 & {^{\t^3}}\ol{\ol\varphi} & -\lambda_v^{{^{\tw}}\ol{\Fbar}}\\ 
0 & 0 & \lambda_u^{{^{\t^3}}\ol{\Gbar}} & {^{\tw}}\ol{\ol\psi}\\
\end{pmatrix}
\right)
\end{align*}
where the last isomorphism is $(\alpha,\beta)$ where both $\alpha$ and $\beta$ are given by the matrix
$$\begin{pmatrix} 1 & 0 & 0 & i\\ 0 & -1 & -i & 0\\ 0 & -i & -1 & 0\\ i & 0 & 0 & 1\\ \end{pmatrix}.$$

For the second isomorphism,
\begin{align*}
{\rm Res}_1\;{\rm Res}_2\;\mathscr H(\varphi,\psi) &= {\rm Res}_1\;{\rm Res}_2\left( \begin{pmatrix} {^{\tw}}\ol{\ol\varphi} & -\lambda_v^{{^{\t}}\ol{\Fbar}}\\ \lambda_u^{{^{\tw}}\ol{\Gbar}} & {^{\t}}\ol{\ol\psi}\\ \end{pmatrix}, 
\begin{pmatrix} {^{\tw}}\ol{\ol\psi} & \lambda_v^{{^{\t^3}}\ol{\Gbar}}\\ -\lambda_u^{{^{\tw}}\ol{\Fbar}} & {^{\t^3}}\ol{\ol\varphi}\\ \end{pmatrix}\right)\\
&\cong{\rm Res}_1 \left( \begin{pmatrix} {{^{\tw}}\ol\varphi} & -\lambda_z^{{{^{\t}}\Fbar}}\\ \lambda_z^{{{^{\tw}}\Gbar}} & {{^{\t}}\ol\psi}\\ \end{pmatrix}, \begin{pmatrix} {{^{\tw}}\ol\psi} & \lambda_z^{{{^{\t^3}}\Gbar}}\\ -\lambda_z^{{{^{\tw}}\Fbar}} & {{^{\t^3}}\ol\varphi}\\ \end{pmatrix}\right)\\
&\cong \left( \begin{pmatrix} {{^{\tw}}\varphi} & 0\\ 0 & {{^{\t}}\psi}\\ \end{pmatrix}, \begin{pmatrix} {{^{\tw}}\psi} & 0\\ 0 & {{^{\t^3}}\varphi}\\ \end{pmatrix}\right)
\end{align*}
and the result is clear. 

Finally, the third isomorphism is given by a morphism $(\alpha', \beta')$ where both $\alpha'$ and $\beta'$ are determined by the matrix {\footnotesize $\begin{pmatrix} 1 & 0 \\ 0 & -1\\ \end{pmatrix}$}.
\end{proof}

This brings us to the main result of this article: a noncommutative version of Kn\"orrer's Periodicity Theorem.

\begin{thm}
\label{KP}
The functor $\mathscr H$ induces a bijection between the sets of isomorphism classes of nontrivial indecomposable graded matrix factorizations of $f$ and $f+uv$.
\end{thm}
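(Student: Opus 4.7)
The plan is to leverage the preceding lemma, together with the behavior of $\mathscr{C}$ and ${\rm Res}$ on indecomposables (Proposition~\ref{decomposing}) and the image characterization of $\mathscr{C}$ (Theorem~\ref{imageOfC}), and then apply Krull--Schmidt (Theorem~\ref{KS}) to match indecomposables on the two sides. At a high level, $\mathscr{H}$ is ``half'' of $\mathscr{C}_2\circ \mathscr{C}_1$, and ${\rm Res}_1\circ {\rm Res}_2$ provides an inverse up to a $^{\tau}$-twist and adding a $T$-partner.

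Step 1 (preservation of nontrivial indecomposables). Take $(\varphi,\psi)$ nontrivial and indecomposable in $TMF(f)$. From $T\mathscr{H}\cong \mathscr{H}T$ and the first identity of the lemma,
$$\mathscr{H}(\varphi,\psi)\oplus \mathscr{H}(T(\varphi,\psi))\ \cong\ \mathscr{C}_2\mathscr{C}_1(\varphi,\psi).$$
Now $\mathscr{C}_1(\varphi,\psi)$ is symmetric by Theorem~\ref{imageOfC}, so two applications of Proposition~\ref{decomposing}(1) (splitting off a symmetric $\mathscr{C}$-image) show the right-hand side decomposes into \emph{exactly two} indecomposables: either $(\Omega,\Xi)\oplus T(\Omega,\Xi)$ with $(\Omega,\Xi)$ indecomposable asymmetric (when $(\varphi,\psi)$ is asymmetric), or two isomorphic copies of one indecomposable (when $(\varphi,\psi)$ is symmetric). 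Krull--Schmidt then forces $\mathscr{H}(\varphi,\psi)$ to be indecomposable. Nontriviality follows from the second identity of the lemma: ${\rm Res}_1{\rm Res}_2\mathscr{H}(\varphi,\psi)\cong {}^{\tw}(\varphi,\psi)\oplus {}^{\tw}T(\varphi,\psi)$ is nontrivial, and ${\rm Res}_i$ sends trivial factorizations to trivial ones (since reducing mod $u$ or $v$ leaves $\lambda_f$ unchanged).

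Step 2 (injectivity on isomorphism classes). If $\mathscr{H}(\varphi,\psi)\cong \mathscr{H}(\varphi',\psi')$, applying ${\rm Res}_1{\rm Res}_2$ and the second identity gives
$${}^{\tw}(\varphi,\psi)\oplus {}^{\tw}T(\varphi,\psi)\ \cong\ {}^{\tw}(\varphi',\psi')\oplus {}^{\tw}T(\varphi',\psi').$$
By Krull--Schmidt either $(\varphi,\psi)\cong (\varphi',\psi')$ (done) or $(\varphi,\psi)\cong T(\varphi',\psi')$. In the latter case, if $(\varphi',\psi')$ is symmetric we still have $(\varphi,\psi)\cong (\varphi',\psi')$; if $(\varphi',\psi')$ is asymmetric, the two-indecomposable splitting of Step 1 implies $\mathscr{H}(\varphi',\psi')\not\cong T\mathscr{H}(\varphi',\psi')\cong \mathscr{H}(T(\varphi',\psi'))\cong \mathscr{H}(\varphi,\psi)$, contradicting the hypothesis.

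Step 3 (surjectivity). Let $(\Omega,\Xi)$ be nontrivial indecomposable in $TMF(f+uv)$. Set $\Upsilon:={\rm Res}_2(\Omega,\Xi)$, which is reduced and symmetric by the analog of Lemma~\ref{resSymm} for $f+z^2$. By Theorem~\ref{imageOfC}, write $\Upsilon\cong \mathscr{C}_1(\chi)$ for some reduced $\chi\in TMF(f)$. I would then argue that $\chi$ can be taken indecomposable: if $(\Omega,\Xi)$ is asymmetric then Proposition~\ref{decomposing}(2) forces $\Upsilon$ indecomposable, so $\chi$ must be indecomposable asymmetric; if $(\Omega,\Xi)$ is symmetric then $\Upsilon\cong \Upsilon_1\oplus T\Upsilon_1$ with $\Upsilon_1$ indecomposable asymmetric, and the decomposition $\chi\cong \chi_1\oplus\chi_2$ with both $\chi_i$ asymmetric is excluded because each $\mathscr{C}_1(\chi_i)$ would then be symmetric (by Theorem~\ref{imageOfC}) while the $\Upsilon_i$ are asymmetric, so $\chi$ is indecomposable symmetric. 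Now by Lemma~\ref{upDown}(2), $\mathscr{C}_2\Upsilon\cong {}^{\tau}(\Omega,\Xi)\oplus {}^{\tau}T(\Omega,\Xi)$, while the first identity of the preceding lemma gives $\mathscr{C}_2\mathscr{C}_1(\chi)\cong \mathscr{H}(\chi)\oplus T\mathscr{H}(\chi)$. Since $\mathscr{H}(\chi)$ is indecomposable by Step 1, Krull--Schmidt yields $\mathscr{H}(\chi)\cong {}^{\tau}(\Omega,\Xi)$. Because $\mathscr{H}$ manifestly commutes with $^{\tau^{-1}}$ (by naturality of the formula in Definition~\ref{not:H}), the element ${}^{\tau^{-1}}\chi\in TMF(f)$ is a nontrivial indecomposable preimage of $(\Omega,\Xi)$.

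The main obstacle I anticipate is Step 3, specifically pinning down that a single indecomposable $\chi$ with the correct symmetry type produces $\mathscr{C}_1(\chi)\cong \Upsilon$ in both the symmetric and asymmetric cases for $(\Omega,\Xi)$. This forces one to juggle the symmetry dichotomy simultaneously at three levels ($f$, $f+z^2$, $f+uv$), and to track the $^{\tau}$-shift that unavoidably appears in Lemma~\ref{upDown}; the rest of the argument is essentially a bookkeeping exercise with Krull--Schmidt.
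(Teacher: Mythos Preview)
Your Steps 1 and 2 are essentially the paper's argument, with the same case split on symmetry and the same use of Krull--Schmidt; the paper just phrases Step~1 in the opposite order (symmetric case first).

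Step 3 is where you genuinely diverge. The paper does not invoke Theorem~\ref{imageOfC} at all for surjectivity. Instead it computes $\mathscr C_2\,\mathscr C_1\,{\rm Res}_1\,{\rm Res}_2(\Phi,\Psi)$ in two ways: once via the first identity of the preceding lemma as $\mathscr H\big({\rm Res}_1{\rm Res}_2(\Phi,\Psi)\oplus T\,{\rm Res}_1{\rm Res}_2(\Phi,\Psi)\big)$, and once by applying Lemma~\ref{upDown}(2) twice (using that $\tau$ commutes with $\mathscr C_i$ and that $\mathscr CT\cong T\mathscr C$) to obtain ${}^{\tw}(\Phi,\Psi)^{\oplus 2}\oplus {}^{\tw}T(\Phi,\Psi)^{\oplus 2}$. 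Then Krull--Schmidt and Step~1 finish the job without ever naming a specific preimage $\chi$ or tracking symmetry at the middle level. This is shorter and avoids the three-level symmetry juggling you anticipated as the main obstacle. Your route is more constructive---you actually identify the preimage---at the cost of the extra bookkeeping.

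Two small imprecisions in your Step 3, neither fatal: first, Theorem~\ref{imageOfC} does not literally assert that the preimage $\chi$ is reduced; you need to observe that $\mathscr C_1$ sends trivial factorizations to factorizations with free $B^\#$-cokernel (hence into $TMF^t$), so any trivial summand of $\chi$ would contribute a trivial summand to the reduced $\Upsilon$, which Krull--Schmidt rules out. Second, your final Krull--Schmidt step only yields $\mathscr H(\chi)\cong {}^\tau(\Omega,\Xi)$ \emph{or} ${}^\tau T(\Omega,\Xi)$; in the latter case replace $\chi$ by $T\chi$. Either way you get a preimage, so surjectivity stands.
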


\begin{proof}
Let $(\varphi,\psi)\in TMF(f)$ be a nontrivial indecomposable factorization. If $(\varphi,\psi)$ is symmetric, then $\Cscr_1(\varphi,\psi)\cong (\Phi',\Psi')\oplus T(\Phi',\Psi')$ by Proposition \ref{decomposing}, where $(\Phi',\Psi')$ is indecomposable and asymmetric. Proposition \ref{decomposing} then implies that $\Cscr_2(\Phi',\Psi')$ and $\Cscr_2(T(\Phi',\Psi'))$ are indecomposable. Hence $\mathscr C_2\mathscr C_1(\varphi,\psi)$ is a direct sum of precisely two nontrivial indecomposable factorizations.
If $(\varphi,\psi)$ is asymmetric, then $\Cscr_1(\varphi,\psi)$ is indecomposable by Proposition \ref{decomposing} and symmetric by Theorem \ref{imageOfC}. Again by Proposition 5.4, it follows that 
$\mathscr C_2\mathscr C_1(\varphi,\psi)$ is a direct sum of precisely two nontrivial indecomposable factorizations. Thus in either case, $\mathscr H(\varphi,\psi)$ is indecomposable by the previous lemma. 

We prove $\mathscr H$ is injective on isomorphism classes. If $(\varphi',\psi')$ is another graded factorization such that $\mathscr H(\varphi,\psi)\cong \mathscr H(\varphi',\psi')$, then by the second isomorphism in the previous lemma we have $(\varphi',\psi')\cong (\varphi,\psi)$ or $T(\varphi,\psi)$. Suppose $(\varphi',\psi')\cong T(\varphi,\psi)$. By Proposition \ref{decomposing}, $\mathscr C_1(\varphi,\psi)$ is indecomposable and $\mathscr C_2\mathscr C_1(\varphi,\psi)$ splits into two non-isomorphic direct summands. So by the third isomorphism in the previous lemma, $$\mathscr H(\varphi,\psi)~ \ncong~ T\mathscr H(\varphi,\psi) ~\cong ~\mathscr HT(\varphi,\psi)~\cong~ \mathscr H(\varphi',\psi')~\cong ~\mathscr H(\varphi,\psi),$$ a contradiction. Therefore, $(\varphi',\psi')\cong (\varphi,\psi)$.

Finally, let $(\Phi,\Psi)\in TMF(f+uv)$ be nontrivial and indecomposable. By the previous lemma,
\begin{align*}
\mathscr C_2\;\mathscr C_1\;{\rm Res}_1\;{\rm Res}_2(\Phi,\Psi)
&\cong \mathscr H\;{\rm Res}_1\;{\rm Res}_2(\Phi,\Psi)\oplus T\;\mathscr H\;{\rm Res}_1\;{\rm Res}_2(\Phi,\Psi)\\
&\cong \mathscr H\big({\rm Res}_1\;{\rm Res}_2(\Phi,\Psi)\oplus T\;{\rm Res}_1\;{\rm Res}_2(\Phi,\Psi)\big).
\end{align*}
Note that since we extend $\sqrt{\s}$ by the identity map to $A[z;\sqrt{\s}]$ and $A[z;\sqrt{\s}][w;\sqrt{\s}]$, the functor $\t$ commutes with $\Cscr_1$, $\Cscr_2$, and $\mathscr H$. Thus we have 
\begin{align*}
\mathscr C_2\;\mathscr C_1\;{\rm Res}_1\;{\rm Res}_2(\Phi,\Psi)
&\cong \mathscr C_2({^{\t}}{\rm Res}_2(\Phi,\Psi)\oplus {^{\t}}T{\rm Res}_2(\Phi,\Psi))\\
&\cong {^{\t}}\mathscr C_2{\rm Res}_2(\Phi,\Psi)\oplus {^{\t}}T\mathscr C_2{\rm Res}_2(\Phi,\Psi)\\
&\cong {^{\t}}\mathscr C_2{\rm Res}_2(\Phi,\Psi)\oplus {^{\t}}T\mathscr C_2{\rm Res}_2(\Phi,\Psi)\\
&\cong {^{\tw}}(\Phi,\Psi)^{\oplus 2}\oplus {^{\tw}}T(\Phi,\Psi)^{\oplus 2}
\end{align*}
by applying Lemma~\ref{upDown}(2),  the fact that $\mathscr CT\cong T\mathscr C$,  Theorem~\ref{imageOfC}, and again
Lemma~\ref{upDown}(2), respectively.
These two calculations, and the fact that $\t$ commutes with $\mathscr H$, show that ${^{\tw}}(\Phi,\Psi)$, and hence $(\Phi,\Psi)$, is isomorphic to a direct summand of a factorization in the image of $\mathscr H$. Now the result follows by Corollary~\ref{KS}.
\end{proof}

The following is now immediate from Theorem \ref{KP} and Theorem \ref{TMFequiv}.

\begin{cor} \label{corKP}
There is a bijection between the sets of isomorphism classes of indecomposable non-free MCM $B$-modules and indecomposable non-free MCM $(B^{\#})^{\#}$-modules.
\end{cor}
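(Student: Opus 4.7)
The plan is to string together Theorem~\ref{KP}, Theorem~\ref{TMFequiv}, and Proposition~\ref{reducedIsom}, using the identification from Remark~\ref{rmk:chgvar}. First I would set up the hypersurface side: by Remark~\ref{rmk:chgvar}, $(B^{\#})^{\#}\cong A'/(f+uv)$ where $A' := A[u;\sqrt{\sigma}][v;\sqrt{\sigma}]$. Since iterated Ore extensions of a noetherian AS-regular algebra by graded automorphisms are again noetherian AS-regular, and $f+uv$ is normal and regular in $A'$ (with normalizing automorphism obtained from $\sigma$ by extending $\sqrt{\sigma}$ by the identity), Theorem~\ref{TMFequiv} applies to both hypersurfaces $B$ and $(B^{\#})^{\#}$.

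Next I would translate each side of the desired bijection into a statement about twisted matrix factorizations. By Theorem~\ref{TMFequiv} together with Proposition~\ref{reducedIsom}, the cokernel functor induces a bijection between the isomorphism classes of reduced twisted matrix factorizations of $f$ over $A$ and the isomorphism classes of MCM $B$-modules with no free direct summand; the analogous bijection holds for $f+uv$ over $A'$ and $(B^{\#})^{\#}$. I would then use Theorem~\ref{KS} (Krull--Schmidt for $TMF$) together with the Krull--Schmidt property for MCM modules to pass from ``reduced'' to the indecomposable level: under this correspondence, indecomposable reduced TMFs match indecomposable non-free MCM modules. The only subtle check is that one excludes trivial indecomposable factorizations: the cokernel of $(1_F,\lambda_f^F)$ is $0$ and the cokernel of $(\lambda_f^F,1_{^{\tw}F})$ is the free $B$-module $F/fF$, so the nontrivial indecomposable TMFs are precisely the indecomposables whose cokernel is a nonzero non-free indecomposable MCM module.

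Finally, I would invoke Theorem~\ref{KP}: the functor $\mathscr{H}$ gives a bijection between the isomorphism classes of nontrivial indecomposable TMFs of $f$ over $A$ and those of $f+uv$ over $A'$. Composing this with the cokernel bijections from the previous step on each side yields the desired bijection between indecomposable non-free MCM $B$-modules and indecomposable non-free MCM $(B^{\#})^{\#}$-modules.

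The argument is essentially a diagram chase, so there is no deep obstacle; the only place requiring care is the exact matching in the middle paragraph, where one must verify that ``nontrivial indecomposable'' on the TMF side corresponds precisely to ``indecomposable non-free'' on the MCM side. This is handled by the explicit description of trivial factorizations together with Theorem~\ref{KS} and Proposition~\ref{reducedIsom}.
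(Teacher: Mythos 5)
Your proposal is correct and is precisely the argument the paper intends: the paper's proof of Corollary~\ref{corKP} consists of the single sentence that it is ``immediate from Theorem~\ref{KP} and Theorem~\ref{TMFequiv},'' and your write-up simply supplies the implicit details (the cokernel bijections via Proposition~\ref{reducedIsom}, the Krull--Schmidt matching of indecomposables, and the check that trivial factorizations have zero or free cokernel). No discrepancy with the paper's approach.
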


\begin{rmk}
Since commutative polynomial rings are AS-regular domains, we obtain a graded version of Kn\"orrer Periodicity for even-degree hypersurfaces in the commutative setting as a special case of Theorem \ref{KP}. 
\end{rmk}



\section{Noncommutative Kleinian singularities} \label{sec:examples}

In this section, we provide an interesting illustration of the main
results from Sections~\ref{sec:prelim}-\ref{sec:KP} for the {\it
  noncommutative Kleinian singularities} appearing in work of
Chan-Kirkman-Walton-Zhang; see \cite[Section~5]{CKWZ} and Table
\ref{tab:kleinian} below.  Our goal is~twofold:

\begin{itemize}
\item[(I)] If $(C,f)$ is a pair listed in the classification of \cite{CKWZ} appearing in
Table \ref{tab:kleinian}, then we aim to show that  there exists a square root of the normalizing automorphism that fixes $f$, and so 
$C[z: \tau]/(f + z^2)$ has finite CM type by Theorem~\ref{fcmtPreserved}.
\smallskip

\item[(II)] Produce matrix factorization representations of all  indecomposable non-free MCMs (up to degree shift) for the $(C,f)$ pairs
listed in \cite{CKWZ}.
\end{itemize}

Table \ref{tab:kleinian} references the notion of a \emph{commutative Kleinian singularity}. By this we mean a fixed ring of the form $\k[u, v]^G$ where $G\le SL_2(\k)$ is a finite subgroup acting as $\k$-algebra automorphisms. It is well known from classical invariant theory that the invariant rings $\k[u, v]^G$ are affine hypersurface singularities of the form $\k[x,y,z]/(f)$. Moreover, the generators $x, y,$ and $z$ can be chosen to be homogeneous (in the standard grading of $\k[u, v]$), and $f$ is homogeneous relative to the grading induced by the degrees of $x$, $y$, and $z$ (see, for example, \cite[p. 420]{GSV84}).

Since commutative Kleinian singularities reside naturally in the graded setting, Goals (I) and (II) can be viewed as extensions of the classical theory. The first goal above specializes to one direction of a graded version of the classification of commutative hypersurface singularities of finite CM type
due to Buchweitz-Greuel-Schreyer \cite{BGS} and Kn\"orrer \cite{knor}. Similarly, the second goal extends a graded version of the well-known classification of
matrix factorizations over Kleinian singularities given in \cite{GSV84}.  
Goals I and II are achieved in Theorems~\ref{thm:goal1}
and~\ref{thm:factorizations}. For more discussion of the commutative graded case, see subsection \ref{cgc} below.

\begin{table}[h]
\centering
\caption{Noncommutative Kleinian singularities $C/(f)$, as
(noncommutative) hypersurface singularities. This is \cite[Table 3]{CKWZ}, with minor corrections to $q_{ij}$ in cases ({\tt b}) and ({\tt g}).}
\label{tab:kleinian}
\begin{tabular}{|l|l|l|}
\hline  &
 $C$ & $f$ \\ 
 \hline 
 \hline 
({\tt a}) & \text{Commutative Kleinian singularity} & \\
\hline
\multirow{2}{*}{({\tt b})} & $\kk_{\mathbf{q}}[a_1,a_2,a_3]$ &  \multirow{2}{*}{$a_2^n-(-1)^{\frac{n(n-1)}{2}}a_1 a_3$} \\
   & ~~~where $q_{12}=q_{23}=(-1)^{n}$ and $q_{13}=(-1)^{n^2}$ & \\
\hline
({\tt c}) & $\kk\langle a_1,a_2\rangle/ ([a_1^2,a_2],[a_2^2,a_1])$ & $a_1^6-a_2^2$ \\
\hline
\multirow{6}{*}{({\tt d})} & $n$ even & \multirow{2}{*}{$a_3^2\!-\!a_1^2a_2\!-\!(-1)^{\frac{n+2}{2}}4a_2^{(n+2)/2}$}\\
    & $\kk[a_1,a_2,a_3]$                       &  \\
    \cline{2-3}
    & $n$\text{ odd}                                & \multirow{4}{*}{$a_3^2 + a_2a_1^2$}\\
    & $\displaystyle\frac{\kk\langle a_1, a_2,a_3\rangle}
    {\left(\begin{array}{c}
    a_3a_1+a_1a_3-4(-1)^{(n+1)/2}a_2^{(n+1)/2} 
    \\
    \ [a_1. a_2], [a_2,a_3]
    \end{array}\right)}$
		& \\
\hline
({\tt e}) & $\kk[a_1,a_2,a_3]$ & $a_2^{2n}-(-1)^n a_1a_3$ \\ 
\hline 
({\tt f}) & \text{Commutative Kleinian singularity} & \\
\hline
\multirow{2}{*}{({\tt g})} & $\kk_{\mathbf{q}}[a_1,a_2,a_3]$               & \multirow{2}{*}{$a_2^n-q^{\frac{n(n-1)}{2}}a_1 a_3$}\\
    & ~~~where $q_{12}=q_{23}=q^{n},q_{13}=q^{n^2}$ & \\
\hline 
\multirow{3}{*}{({\tt h})} & $\displaystyle\frac{ \kk\langle a_1,a_2,a_3 \rangle}{
\left(\begin{array}{l}a_2a_1 -
a_1a_2 - 2a_1^2\\ a_3a_2 - a_2a_3 - 2a_2^2\\
a_3a_1 -a_1a_3 - 4a_1a_2 - 6a_1^2\end{array}\right)}$ & \multirow{3}{*}{$a_2^2-a_1a_2-a_1a_3$} \\
\hline
\end{tabular}

\bigskip

\end{table}

\begin{thm} \label{thm:goal1}
Suppose $(C,f)$ is a (possibly noncommutative) Kleinian singularity as in 
\cite{CKWZ}, and suppose $\sigma$ is the normalizing automorphism
of $f$.  Then there exists $\tau$ an automorphism of $C$ such that
$\tau^2 = \sigma$ and $\tau(f) = f$.  In particular, the ring
$C[z;\tau]/(f+z^2)$ has finite CM type.
\end{thm}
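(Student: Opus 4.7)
\medskip
\noindent\textbf{Proof plan.} The strategy is to verify the existence of $\tau$ case-by-case for the eight entries of Table~\ref{tab:kleinian}, and then to invoke Theorem~\ref{fcmtPreserved} to pass from finite CM type of $B=C/(f)$ (which is already known from the McKay correspondence in \cite{CKWZ2}) to finite CM type of $B^\#=C[z;\tau]/(f+z^2)$. For each pair $(C,f)$, I will first compute the normalizing automorphism $\sigma$ explicitly from the defining relation $a_i f = f\,\sigma(a_i)$ applied to each homogeneous generator $a_i$ of $C$. In every case one finds that $\sigma$ acts diagonally on the chosen generators by scalars $\lambda_i\in\kk^\times$. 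Since $\kk$ is algebraically closed of characteristic zero, each $\lambda_i$ has a square root $\mu_i$, and one defines $\tau$ by $\tau(a_i)=\mu_i a_i$.

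One must then check two things: (i) $\tau$ preserves the relations of $C$, so that $\tau$ is in fact a graded algebra automorphism; and (ii) $\tau(f)=f$. Condition (i) is automatic whenever the relations have the quantum-commutative form $a_i a_j = q_{ij} a_j a_i$, which takes care of cases (a), (b), (e), (f), (g) immediately (with $\sigma=\mathrm{id}$, hence $\tau=\mathrm{id}$, in the commutative cases (a), (f)). Condition (ii) reduces to the scalar equations $\mu_{i_1}\cdots\mu_{i_m}=1$ for each monomial $a_{i_1}\cdots a_{i_m}$ appearing in $f$, which are readily solved by choosing appropriate branches of the square roots $\mu_i$.

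The main obstacle is handling the genuinely noncommutative cases (c), (d) with $n$ odd, and (h), where the defining relations of $C$ are not quantum-commutations but involve sums of monomials of mixed lengths. For these, condition (i) imposes additional multiplicative constraints on the $\mu_i$: for instance, in case (h), the relation $a_2a_1-a_1a_2-2a_1^2=0$ forces $\mu_1\mu_2=\mu_1^2$, hence $\mu_1=\mu_2$, and similarly the other two relations pin down further equalities among the $\mu_i$. One then has to verify that these constraints are simultaneously compatible with $\mu_i^2=\lambda_i$ and with $\tau(f)=f$. The plan is to write out $\sigma$ by hand for each of these three awkward cases, read off the constraints, and exhibit an explicit solution $\tau$; this is a finite amount of bookkeeping but is the only place where something could go wrong.

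Finally, once $\tau$ has been produced in each case, Hypothesis~\ref{hyp:db} is fulfilled: $\sqrt{\sigma}:=\tau$ is a square root of $\sigma$ with $\tau(f)=f$, and a direct check of the table shows $\deg f$ is even in every case. So $C[z;\tau]/(f+z^2)$ is a bona fide double branched cover $B^\#$ of $B=C/(f)$. Since $B$ has finite Cohen-Macaulay type by \cite{CKWZ2} (as the indecomposable graded MCM modules over $B=C^H$ biject, up to shift, with the finitely many simple modules of the quantum binary polyhedral group $H$, via Theorem~\ref{McKayII}), Theorem~\ref{fcmtPreserved} yields that $B^\#=C[z;\tau]/(f+z^2)$ has finite CM type as well, completing the argument.
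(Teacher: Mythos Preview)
Your overall architecture is correct and matches the paper's: compute $\sigma$ case-by-case, exhibit a square root $\tau$ fixing $f$, then invoke Theorem~\ref{fcmtPreserved} together with the finite CM type of $C/(f)$ from Theorem~\ref{McKayII}. However, there is a genuine gap in case~({\tt h}).

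Your claim that ``in every case one finds that $\sigma$ acts diagonally on the chosen generators by scalars $\lambda_i\in\kk^\times$'' is false in case~({\tt h}). There one computes
\[
a_1 f = f a_1,\qquad a_2 f = f(a_2+2a_1),\qquad a_3 f = f(a_3+4a_2+6a_1),
\]
so $\sigma$ is unipotent upper-triangular on $\langle a_1,a_2,a_3\rangle$ and is \emph{not} diagonalizable (all eigenvalues equal~$1$ but $\sigma\neq\mathrm{id}$). Consequently no diagonal automorphism $\tau(a_i)=\mu_i a_i$ can satisfy $\tau^2=\sigma$, since $\tau^2$ would again be diagonal. Your subsequent analysis of the constraints $\mu_1=\mu_2$ etc.\ is therefore beside the point: those constraints describe which diagonal maps are automorphisms of $C$, but none of them squares to the actual $\sigma$. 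The paper resolves this by taking the explicit unipotent square root
\[
\tau(a_1)=a_1,\qquad \tau(a_2)=a_1+a_2,\qquad \tau(a_3)=2a_1+2a_2+a_3,
\]
and then verifying by hand both that $\tau$ preserves the three defining relations of $C$ and that $\tau(f)=f$.

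A minor remark: cases ({\tt c}) and ({\tt d}) with $n$ odd are in fact easier than you suggest, because there $f$ is central, so $\sigma=\mathrm{id}$ and $\tau=\mathrm{id}$ works immediately; the paper disposes of them in one line. The only case requiring real care is ({\tt h}).
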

\begin{proof} Referring to Table~1, 
cases ({\tt a}), ({\tt d}) ($n$ even), ({\tt e}) and ({\tt f}) involve commutative fixed rings. Moreover, in cases ({\tt c}), and ({\tt d}) ($n$ odd), $f$ is central. In each of these cases $\sigma={\rm id}_C$, so the first part of the statement is immediate. For any $\t$ satisfying $\t^2={\rm id}_C$ and $\t(f)=f$, it follows from Theorem \ref{fcmtPreserved} that $C[z;\tau]/(f+z^2)$ has finite CM type if and only if $C/(f)$ does. By Theorem \ref{McKayII} and \cite[Prop. 7.1]{CKWZ0}, all of the noncommutative Kleinian singularities in Table 1 have finite CM type, so the second part of the statement follows in cases ({\tt a}), ({\tt c}), ({\tt d}), ({\tt e}) and ({\tt f}) .

\newpage

In case ({\tt g}), 
$$C=\k\la a_1,a_2,a_3\ra/( a_2a_1 - q^na_1a_2, \;a_3a_1-q^{n^2}a_1a_3,\;
a_3a_2-q^{n}a_2a_3)$$ 
with $f=a_2^n - q^sa_1a_3$ where $s=n(n-1)/2$.
Then $f$ is normal by to the identities:
$$a_1f = q^{-n^2}fa_1 \qquad a_2f = fa_2 \qquad a_3f = q^{n^2}fa_3.$$
There are several choices for a square root $\tau$ of $\s$, not all of which preserve $f$.
A choice $\tau$ which does preserve $f$ is given by choosing $p \in \k$ such that
$p^2 = q^{-n^2}$, and setting $\tau(a_1) = pa_1$, $\tau(a_2) = a_2$ and
$\tau(a_3) = p^{-1}a_3$.  

Case ({\tt b}) is a special case of case ({\tt g}) with $q = -1$.

In case ({\tt h}), 
$$C=\k\la a_1,a_2,a_3\ra/(a_2a_1-a_1a_2-2a_1^2, \;a_3a_2-a_2a_3-2a_2^2,\;
a_3a_1-a_1a_3-4a_1a_2-6a_1^2)$$ with $f=a_2^2-a_1a_2-a_1a_3$.  This element $f$ is normal, since
\begin{equation*} \label{eq:casehaut}
a_1f = fa_1 \qquad a_2f = f(a_2+2a_1) \qquad a_3f = f(a_3+4a_2+6a_1).
\end{equation*}
The automorphism $\sigma$ does have a square root $\tau$, namely by setting
$$\tau(a_1) = a_1 \qquad \tau(a_2) = a_1+a_2 \qquad \tau(a_3) = 2a_1+2a_2+a_3.$$
One may verify directly that $\tau$ indeed fixes $f$:
\begin{align*}
\tau(f) & = (a_1+a_2)^2 - a_1(a_1+a_2) - a_1(2a_2 + 2a_2+a_3)\\
& = a_1^2 -a_1a_2 - a_1a_3.
\end{align*}

{It remains to show in cases ({\tt g}) and ({\tt h}) that $\t$ is an automorphism of $C$. It suffices to show that $\t$, interpreted as a graded automorphism of the free algebra  $\k\la a_1,a_2,a_3\ra$, preserves the defining ideal of $C$.
This is straightforward in case ({\tt g}), and is left to the reader. In case ({\tt h}), we have the following.}

\begin{align*}
&\t(a_2a_1-a_1a_2-2a_1^2) =a_2a_1-a_1a_2-2a_1^2\\
&\\
&\t(a_3a_2-a_2a_3-2a_2^2) \\
& = (2a_1+2a_2+a_3)(a_1+a_2)-(a_1+a_2)(2a_1+2a_2+a_3)-2(a_1+a_2)^2\\
&= a_3a_1+a_3a_2-a_1a_3-a_2a_3-2a_1^2-2a_1a_2-2a_2a_1-2a_2^2\\
&=(a_3a_1-a_1a_3-4a_1a_2-6a_1^2)+(a_3a_2-a_2a_3-2a_2^2)-2(a_2a_1-a_1a_2-2a_1^2)\\
&\\
&\t(a_3a_1-a_1a_3-4a_1a_2-6a_1^2)\\
&=(2a_1+2a_2+a_3)a_1-a_1(2a_1+2a_2+a_3)-4a_1(a_1+a_2)-6a_1^2\\
&=2a_2a_1+a_3a_1-6a_1a_2-a_1a_3-10a_1^2\\
&=2(a_2a_1-a_1a_2-2a_1^2)+(a_3a_1-a_1a_3-4a_1a_2-6a_1^2)
\end{align*}
Thus $\t$ is a graded automorphism of $C$.  The final statement holds by Theorem~\ref{fcmtPreserved}.
\end{proof}

\begin{rmk} Inductively one can show
$$C[z_1;\s_1][z_2;\s_2]\cdots[z_n;\s_n]/(f+z_1^2+\cdots + z_n^2)$$ 
has finite CM type, provided for each $i = 1,\dots,n$, $\sigma_i$ is a graded automorphism of
$C[z_1;\s_1][z_2;\s_2]\cdots[z_{i-1};\s_{i-1}]$
and is a square root of the normalizing automorphism of $f_{i-1} = f + z_1^2 + \cdots z_{i-1}^2$ that fixes $f_{i-1}$.  We have shown that
such square roots $\s_1$ always exist for each pair $(C,f)$, and inductively they can be extended to
$C[z_1;\s_1][z_2;\s_2]\cdots[z_i;\s_i]$ (e.g. by $\s_{i}(z_{i-1}) = \pm z_{i-1}$).
\end{rmk}

As noted in the proof of Theorem \ref{thm:goal1} above, the noncommutative Kleinian singularities in Table 1 have finite CM type. In the subsections that follow, in the cases when the fixed ring is noncommutative, we give matrix factorizations that represent the finitely many MCM modules over each of the
noncommutative Kleinian singularities that appear in the classification of \cite{CKWZ}. These are summarized in Theorem~\ref{thm:factorizations} below. Together with the discussion in subsection \ref{cgc}, this achieves Goal (II).

\subsection{Case ({\tt c})} In this case, $C = \kk\langle a_1,a_2\rangle/ ([a_1^2,a_2],[a_2^2,a_1])$ is a down-up algebra where the squared generators are central,
and $f = a_2^2 - a_1^6$. Here, we set $\deg a_1 =~1$ and $\deg a_2 = 3$.
In \cite[Proposition~2.4]{CKWZ}, the authors proved there is a single non-free indecomposable MCM over $C/(f)$.  It may be represented
by the matrix factorization $\varphi : C[-4] \oplus C[-3] \to C[-1] \oplus C$ and $\psi : C[-7] \oplus C[-6] \to C[-4] \oplus C[-3]$
given by the matrices:
$$\varphi = \begin{pmatrix}a_2 & -a_1^4 \\ -a_1^2 & a_2\end{pmatrix}\qquad \psi = \begin{pmatrix}a_2 & a_1^4 \\ a_1^2 & a_2\end{pmatrix}.$$
A brief check shows that the above matrices give a matrix factorization of $a_2^2 - a_1^6$.
Since the generators of the free modules in the source and target of $\varphi$ are in different degrees, one may check that
the only degree zero maps from $(\varphi,\psi)$ to itself are scaling by a constant.  Therefore $(\varphi,\psi)$ is indecomposable.

\subsection{Case ({\tt d}), $n$ odd}

Fix $n$ an odd positive integer. Let $A=\k[a_1,a_2]$ be the commutative polynomial ring. We view $A$ as a graded algebra by defining
$\deg(a_1)=n$ and $\deg a_2=4$. Let $\t:A\to A$ be the graded algebra automorphism determined by $\t(a_1)=-a_1$ and $\t(a_2)=a_2$. Let $\d:A\to A$
be the graded $\t$-derivation determined by $\d(a_1)=4(-1)^{(n+1)/2}a_2^{(n+1)/2}$ and $\d(a_2)=0$. Let $C=A[a_3;\t,\d]$, where $a_3a=\t(a)a_3+\d(a)$
for all $a\in A$. Then $C$ is graded by taking $\deg(a_3)=n+2$. Since $A$ is a domain, so is $C$. One can check that $a_3^2+a_1^2a_2$ is central
and homogeneous in $C$.  By \cite{CKWZ}, $C/(a_3^2+a_1^2a_2)$ has finite CM type with $\frac{n+1}{2}$ indecomposable MCM modules.

The reader will note that while $C/(a_3^2+a_1^2a_2)$ has finite CM type, the (graded) algebra $A/(a_1^2a_2)$ is a commutative ``$D_{\infty}$''
singularity of countable CM type. This example shows that a version of the theory above that considers double branched covers of the
form $A[z; \t, \d]$ with nontrivial derivation $\d$ may lead to unpredictable behavior where preservation of finite CM type is concerned.

There is one indecomposable matrix factorization $(\varphi,\psi)$ of rank 2 where
\begin{align*}
\varphi &= \begin{pmatrix} a_3 & a_1^2\\ -a_2 & a_3\\ \end{pmatrix}: C[-2n]\oplus C[-n-2]\to C[2-n]\oplus C\\
\psi &= \begin{pmatrix} a_3 & -a_1^2\\ a_2 & a_3\\ \end{pmatrix}:C[-3n-2]\oplus C[-2n-4]\to C[-2n]\oplus C[-n-2].
\end{align*}
It is straightforward to check that $\varphi\psi = \lambda_{a_3^2+a_1^2a_2}$ and $\psi\varphi[-n-2]=\lambda_{a_3^2+a_1^2a_2}$. Since $n$ is odd, the generators of 
$C[-2n]\oplus C[-n-2]$ are always in different degrees, hence the degree~0 endomorphism ring of $(\varphi,\psi)$ is isomorphic to $\k$. This implies $(\varphi,\psi)$ is indecomposable.

On the other hand, let $m=\frac{n+1}{2}$ and $s=\frac{n+1}{2}$. For $1\le j\le m-1$, define
$$\varphi_j =
\begin{pmatrix} a_3 & (-1)^s 2a_2^{m-j} & a_1a_2 & 0\\
                0 & -a_3 & 2a_2^{j+1} & -a_1a_2\\
                a_1 & 0 & a_3 & (-1)^{s}2a_2^{m-j}\\
                2a_2^j & -a_1 & 0 & -a_3\\
\end{pmatrix}.$$

When $n>4j$, set $F_j=C[4j-2n-4]\oplus C[-n-4]\oplus C[4j-2n-2]\oplus C[-n-2]$ and 
$G_j=F_j[n+2]$. Then $\varphi_j$ determines a degree~0 $C$-linear homomorphism which we also denote $\varphi_j:F_j\to G_j$.
It is straightforward to check that $\varphi_j\varphi_j[-n-2]=\lambda_{a_3^2+a_1^2a_2}$, hence $(\varphi_j, \varphi_j[-n-2])$ is a twisted matrix factorization of $a_3^2+a_1^2a_2$.
Since $n$ is odd, the degrees of the generators of $F$ are all distinct. It follows that the graded endomorphism ring of this factorization is
isomorphic to $\k$, and hence the twisted matrix factorization is indecomposable. The proof that $(\varphi_j,\varphi_j[-n-2])$ is indecomposable for $n<4j$ is completely
analogous and left to the reader.

It remains to show that $(\varphi_i, \varphi_i[-n-2])\ncong (\varphi_j,\varphi_j[-n-2])[s]$ for $i\neq j$ and grading shift $s$.
The generators of $F_j$ are in degrees $2n+4-4j$, $n+4$,  $2n+2-4j$, and $n+2$. The degree difference between the first two
is $n-4j$, which depends on $j$. Since no other $F_i$ has generators that differ in degree by this amount, there can be no
invertible degree~0 homomorphism from $(\varphi_i, \varphi_i[-n-2])$ to a shift of $(\varphi_j, \varphi_j[-n-2])$.

\subsection{Case ({\tt g})}

Let $$C=\k\la a_1,a_2,a_3\ra /(a_2a_1-q^na_1a_2, \;a_3a_1-q^{n^2}a_1a_3, \;a_3a_2-q^na_2a_3)$$ and let $f = a_1a_3-q^{\d}a_2^n$ where $\d=-\binom{n}{2}$.
The algebra $C$ is graded by $\deg a_1 = \deg a_3 = n$ and $\deg a_2=2$. The element $f$ is normal and regular. The normalizing automorphism of $f$ is
the graded $\k$-linear automorphism $\sigma:C\to C$ defined by $af=f\sigma(a)$ for $a\in A$. One can check that 
$$\sigma(a_1) = q^{-n^2}a_1, \quad \sigma(a_2)=a_2, \quad \sigma(a_3) = q^{n^2}a_3.$$

To produce all nontrivial indecomposable matrix factorizations of $f$ up to isomorphism, we will apply the machinery of the second double branched cover from Section \ref{sec:KP} to a Zhang twist of $C$. 

Let $\phi:C\to C$ be the graded $\k$-linear automorphism given by $\phi(a_1)=a_1$, $\phi(a_2)=q^{-1}a_2$ and $\phi(a_3)=q^{-n}a_3$.
Let $\xi = \{ \xi_n=\phi^n\ |\ n\in \Z\}$ be the left twisting system associated to $\phi$. The (left) Zhang twist of $C$ by $\xi$, which we denote $C^{\xi}$, is the graded $\k$-algebra defined on the graded $\k$-vector space $C$ by $c_1\ast c_2 = \xi_h(c_1)c_2$ for all $c_1\in C_{\ell}$, $c_2\in C_h$. The Zhang twist of a graded $C$-module $M$ by $\xi$, denoted $M^{\xi}$, is the graded $C^{\xi}$-module defined on the graded $\k$-vector space $M$ by $c\cdot m = \xi_h(c)m$ for all $c\in C_{\ell}$, $m\in M_h$. If $\rho:M\to N$ is a degree 0 homomorphism of graded left $C$-modules, the underlying $\k$-linear map is also a degree 0 homomorphism of left $C^{\xi}$-modules, which we denote $\rho^{\xi}:M^{\xi}\to N^{\xi}$.

To help distinguish products in $C^{\xi}$ from those in $C$, when working in $C^{\xi}$ we denote the generators $a_1, a_2,$ and $a_3$ respectively by $x, y,$ and $z$. It is straightforward to check that $C^{\xi}=\k[x,y,z]$. In particular, $f$ is central in $C^{\xi}$. Furthermore, for any $j\in\N$ we have $y^j=q^{-j(j-1)}a_2^j$, so $f=xz-q^{-\d}y^n$.

Observe that $\sigma\xi_m=\xi_m\sigma$ for all $m\in \Z$. Also note that for any $m\in\Z$ we have $\xi_m(f) = (q^{-n})^mf$. Thus by Theorem 3.6 of \cite{CCKM}, the categories $TMF_C(f)$ and $TMF_{C^{\xi}}(f)$ are equivalent. The (inverse) equivalence is given on objects as follows (see Theorem 3.6 of \cite{CCKM} for details). Let $c = q^n$. Let $\xi^{-1}=\{\xi_n^{-1}\ |\ n\in\Z\}$ be the inverse twisting system. Given $(\varphi:F\to G,\psi:G[-2n]\to F)\in TMF_{C^{\xi}}(f)$, let $\l_c: {^{\tw}}(G^{\xi^{-1}})\to G[-2n]^{\xi^{-1}}$ given by $m\mapsto c^{\deg m}m$. The image of $(\varphi,\psi)$ in $TMF_C(f)$ is $(\varphi^{\xi^{-1}}, c^{-2n}\psi^{\xi^{-1}}\l_c)$. 
 
Let $A=\k[y]$ where $\deg y=2$ and let $t=-q^{-\d}y^n$. For $1\le j\le n$, let $\gamma_j:A[-2j]\to A$ be given by $a\mapsto ay^j$. Then the pair $(-q^{-\d}\gamma_j[n+j], \gamma_{n-j}[n-j])$ is an indecomposable graded matrix factorization of $t$, hence $$M_j=\textsc{coker} (-q^{-\d} \gamma_j[n+j], \gamma_{n-j}[n-j])$$ is an indecomposable graded MCM $B=A/(t)$-module. Since $\dim_{\k} M_j=j$, the $M_j$ are pairwise nonisomorphic. Forgetting the gradings, the $M_j$ represent all isomorphism classes of indecomposable finitely generated $B$-modules (see, for example, Theorem 3.3 of \cite{LW}). Thus $(-q^{-\d}\gamma_j[n+j], \gamma_{n-j}[n-j])$ for $1\le j\le n-1$ is a complete set of isomorphism classes of nontrivial indecomposable (twisted) matrix factorizations of $t$, up to grading shifts.

By Theorem \ref{KP} above, it follows that 
\begin{align*}
(\varphi_j,\psi_j)&={\mathscr H}(-q^{-\d}\gamma_j[n+j], \gamma_{n-j}[n-j])\\
&=\left(\begin{pmatrix}  -q^{-\d}\gamma_j[-n+j] & x\\ -z & \gamma_{n-j}[-j]\end{pmatrix}, \begin{pmatrix} \gamma_{n-j}[-n-j] & -x\\ z & -q^{-\d}\gamma_j[-2n+j]\end{pmatrix}\right)
\end{align*}
where $1\le j\le n-1$, gives a complete set of isomorphism classes of nontrivial indecomposable twisted matrix factorizations of $f$ over $C^{\xi}$, up to grading shifts. By the equivalence of categories described above, $(\varphi_j^{\xi^{-1}}, c^{-2n}\psi_j^{\xi^{-1}}\l_c)$ where $1\le j\le n-1$ is the desired set of twisted matrix factorizations of $f$ over $C$.
Explicitly,
\begin{align*}
  \varphi_j^{\xi^{-1}} &= \begin{pmatrix} -q^{j-\delta-nj}a_2^j & a_1\\ -q^{-n(n-j)}a_3 & q^{(j-n)(n-1)}a_2^{n-j}\\ \end{pmatrix} \\
  c^{-2n}\psi_j^{\xi^{-1}}\l_c &= \begin{pmatrix} q^{(j-n)(n-1)}a_2^{n-j} & -q^{n(n-j)}a_1\\ q^{-n^2}a_3 & -q^{j-\delta-nj}a_2^j\\ \end{pmatrix}
\end{align*}
for $1\le j\le n-1$.  These twisted matrix factorizations are
different from those given in Example 6.4 of \cite{CCKM}, but one can
show that the two sets of twisted matrix factorizations are the same,
up to isomorphism and degree shift.

\subsection{Case ({\tt h})} In this case, we have that $$C = \kk\langle a_1,a_2,a_3 \rangle / (a_2a_1 - a_1a_2 - 2a_1^2, \; a_3a_2 - a_2a_3 - 2a_2^2,\; 
a_3a_1 - a_1a_3 - 4a_1a_2 - 6a_1^2)$$ which is evidently an iterated Ore extension (with derivation) of the Jordan plane generated by $a_1$ and $a_2$,
hence is AS-regular.  The hypersurface is defined by $f = a_2^2-a_1a_2-a_1a_3$, which is normal in $C$ with normalizing automorphism given by
the equations as in the proof of Theorem \ref{thm:goal1} for case ({\tt h}).  The authors of \cite{CKWZ} show
that $C/(f)$ admits a single indecomposable non-free MCM by \cite[Proposition~2.4]{CKWZ}.  If we let $F = C[-1]^2$, and let $G = C^2$, this
MCM may be represented by the twisted matrix factorization given by the maps $\varphi : F \to G$ and $\psi : {}^{\rm tw}\!G \to F$ where
$$\varphi = \begin{pmatrix}-a_3 &-a_1-a_2 \\ a_2 & a_1 \end{pmatrix}\qquad \psi = \begin{pmatrix} a_1 & a_1 + a_2 \\ -2a_1 - a_2 & -2a_1 - 2a_2  - a_3 \end{pmatrix}.$$
One may verify that $\varphi\psi = \lambda_f^G$ and ${}^{\rm tw}\psi\varphi = \lambda_f^F$, and that the only degree zero morphisms
from $(\varphi,\psi)$ to itself are constant (even though there are generators of $F$ and $G$ that are in the same degree).

\subsection{Summary for Goal II} We summarize the computations above in the following theorem. To see that our list of (non-isomorphic) indecomposable matrix factorizations is complete, note that the number of factorizations we have produced in each case matches the rank of the corresponding McKay quiver (see \cite[Table 6]{CKWZ0}.) By Theorem \ref{McKayII}, this equals the number of indecomposable MCM modules.

\begin{thm}\label{thm:factorizations}
The nontrivial indecomposible twisted matrix factorizations of the noncommutative Kleinian singularities given in Table \ref{tab:kleinian}
(in the cases where the fixed ring is noncommutative) are:
\begin{itemize}

\item[\textnormal{({\tt c})}] $F = C[-4] \oplus C[-3]$, $G = C[-1] \oplus C$, and $\varphi : F \to G$, $\psi : {}^{\rm tw}\!G \to F$ are given by
$$\varphi = \begin{pmatrix}a_2 & -a_1^4 \\ -a_1^2 & a_2\end{pmatrix}\quad\text{and}\quad\psi = \begin{pmatrix}a_2 & a_1^4 \\ a_1^2 & a_2\end{pmatrix}.$$
\smallskip

\item[\textnormal{({\tt d})}] \textnormal{(for $n$ odd)} $F = C[-2n] \oplus C[-n-2]$, $G = C[-n+2] \oplus C$, and $\varphi : F \to G$, $\psi : {}^{\rm tw}\!G \to F$ are given by
$$\varphi = \begin{pmatrix} a_3 & a_1^2 \\ -a_2 & a_1 \end{pmatrix}\quad\text{and}\quad\psi = \begin{pmatrix}a_3 & -a_1^2 \\ a_2 & a_3\end{pmatrix}.$$
Moreover, for each $1 \leq j \leq \frac{n-1}{2}$ and $s = \frac{n+1}{2}$, a twisted matrix factorization $(\varphi_j,\varphi_j[-2])$ where
$F_j = C[4j-2n-4]\oplus C[-n-4]\oplus C[4j-2n-2]\oplus C[-n-2]$, $G_j = F_j[n+2]$, and
$\varphi_j : F_j \to G_j$ is given by
$$\begin{pmatrix} a_3 & (-1)^s 2a_2^{m-j} & a_1a_2 & 0\\
                  0 & -a_3 & 2a_2^{j+1} & -a_1a_2\\
                  a_1 & 0 & a_3 & (-1)^{s}2a_2^{m-j}\\
                  2a_2^j & -a_1 & 0 & -a_3\\
\end{pmatrix}.$$
Distinct values of $j$ yield non-isomorphic twisted matrix factorizations.
\smallskip

\item[\textnormal{({\tt g},{\tt b})}]  For each $0 < j<n$, $F_j = C[-n-j] \oplus C[-2n+j]$, $G_j = C[-n+j] \oplus C[-j]$,
and $\varphi_j : F_j \to G_j$, $\psi_j : {}^{\rm tw}\!G_j \to F_j$ are given by
$$\varphi_j = \begin{pmatrix} -q^{\binom{n}{2}+j -nj}a_2^{j} & a_1\\ -q^{-n(n-j)}a_3 & q^{(j-n)(n-1)}a_2^{n-j}\\ \end{pmatrix},~~\text{and}$$
$$\psi_j = \begin{pmatrix} q^{(j-n)(n-1)}a_2^{n-j} & -q^{n(n-j)} a_1\\ q^{-n^2}a_3 & -q^{\binom{n}{2}+j -nj}a_2^{j}\\ \end{pmatrix}.$$
Distinct values of $j$ yield non-isomorphic twisted matrix factorizations.
\smallskip

\item[\textnormal{({\tt h})}] $F = C[-1]^2$, $G = C^2$, and $\varphi : F \to G$, $\psi : {}^{\rm tw}\!G \to F$ are given by
$$\varphi = \begin{pmatrix}-a_3 &-a_1-a_2 \\ a_2 & a_1 \end{pmatrix}\quad\text{and}\quad\psi = \begin{pmatrix} a_1 & a_1 + a_2 \\ -2a_1 - a_2 & -2a_1 - 2a_2  - a_3 \end{pmatrix}.$$
\end{itemize}
\end{thm}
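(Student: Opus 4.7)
The plan is to proceed case-by-case through (\texttt{c}), (\texttt{d}) (for $n$ odd), (\texttt{g})/(\texttt{b}), and (\texttt{h}), and in each case establish three things: (i) the listed pair $(\varphi,\psi)$ is a twisted matrix factorization of $f$ over $C$; (ii) it is indecomposable; and (iii) no two listed factorizations are isomorphic, even after a degree shift. Completeness of the list will then follow from a counting argument via Theorem~\ref{McKayII}. Step (i) is a direct matrix computation in each case, using the commutation rules of the generators of $C$ and the normalizing automorphism $\sigma$ of $f$ recorded in the proof of Theorem~\ref{thm:goal1}; in case (\texttt{g})/(\texttt{b}) one may instead invoke $\mathscr H$ applied to the rank-one factorizations of $t=-q^{-\delta}y^n$ over the Zhang twist $C^{\xi}\cong\k[x,y,z]$, and then transport back via the category equivalence of \cite[Theorem~3.6]{CCKM}, as sketched in the text.

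For indecomposability (step (ii)), I would invoke Lemma~\ref{nilpotentDecomp}: each listed $(\varphi,\psi)$ is manifestly reduced (its entries lie in the augmentation ideal), so it suffices to verify that its degree-zero endomorphism ring is local, or equivalently that every degree-zero automorphism is of the form $c(\mathrm{id}_F,\mathrm{id}_G)+(\rho_1,\rho_2)$ with $\rho_i$ nilpotent. In cases (\texttt{c}), (\texttt{d}), and (\texttt{g}), the free modules $F$ and $G$ have generators in \emph{distinct} degrees, so any graded degree-zero endomorphism $(\alpha,\beta)$ is forced to be diagonal; the commuting square $\beta\varphi=\varphi\alpha$ then forces the diagonal entries to coincide. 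In case (\texttt{h}), where each of $F$ and $G$ has two generators in the same degree, one needs a short direct calculation: writing $\alpha,\beta$ as $2\times 2$ constant matrices and imposing $\beta\varphi=\varphi\alpha$ yields a linear system whose solution space is a sum of a scalar line and a nilpotent part.

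For (iii), distinctness within case (\texttt{d}) and within case (\texttt{g})/(\texttt{b}) is detected by the degree vectors of the generators of $F_j$ and $G_j$: these depend on $j$, and no graded isomorphism (with any shift applied) can match two different such vectors. This is exactly the argument already given in the text for the $(\varphi_j,\varphi_j[-n-2])$ of case (\texttt{d}) and is essentially the same in case (\texttt{g}).

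For completeness, the key input is the McKay correspondence of Theorem~\ref{McKayII}, which equates the number of (degree-shift classes of) indecomposable graded MCM $C^H$-modules with the number of irreducible $H$-modules, namely the rank $r$ of the McKay quiver tabulated in \cite[Table~6]{CKWZ0}. By Theorem~\ref{TMFequiv} together with the Krull--Schmidt Theorem~\ref{KS}, isomorphism classes of indecomposable non-free MCM $B$-modules are in bijection with isomorphism classes of reduced nontrivial indecomposable twisted matrix factorizations of $f$. In each of cases (\texttt{c}), (\texttt{d}), (\texttt{g})/(\texttt{b}), (\texttt{h}), the cardinality of the list in the theorem equals $r-1$ (subtracting the trivial representation, which corresponds to $C$ as the free MCM), so the listed factorizations exhaust all nontrivial indecomposable isomorphism classes. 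The main obstacle is the bookkeeping in case (\texttt{g}): one must track grading shifts and the action of $\sqrt{\sigma}$ simultaneously through the Zhang twist, through $\mathscr H$, and through the untwisting $(\varphi,\psi)\mapsto(\varphi^{\xi^{-1}},c^{-2n}\psi^{\xi^{-1}}\lambda_c)$, and then verify that the result matches the explicit matrices $\varphi_j^{\xi^{-1}}$ and $c^{-2n}\psi_j^{\xi^{-1}}\lambda_c$ displayed in the statement.
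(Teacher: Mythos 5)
Your overall skeleton (direct verification of the factorization identities, indecomposability via endomorphism rings, pairwise distinctness, and completeness by counting against the McKay quiver through Theorems \ref{McKayII}, \ref{TMFequiv} and \ref{KS}) is exactly the paper's, and your treatment of cases ({\tt c}), ({\tt d}) and ({\tt h}) matches the text. However, two of your concrete claims in case ({\tt g}) fail. First, you group ({\tt g}) with ({\tt c}) and ({\tt d}) as cases where the generators of $F_j$ and $G_j$ lie in distinct degrees. With $\deg a_1=\deg a_3=n$ and $\deg a_2=2$, the generators of $F_j=C[-n-j]\oplus C[-2n+j]$ sit in degrees $n+j$ and $2n-j$, which coincide precisely when $2j=n$; so for $n$ even and $j=n/2$ the degree argument gives nothing (and similarly for $G_j$). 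Second, your distinctness argument via ``degree vectors'' breaks down for the pair $(j,\,n-j)$: the multiset of generator degrees of $F_j$ is $\{n+j,\,2n-j\}$, identical to that of $F_{n-j}$ (likewise for $G_j$ versus $G_{n-j}$), so no shift is even needed to match them, yet $(\varphi_j,\psi_j)\not\cong(\varphi_{n-j},\psi_{n-j})$ must still be proved.

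The paper avoids both problems by not using degree arguments in case ({\tt g}) at all. It produces the $(\varphi_j,\psi_j)$ as $\mathscr H$ applied to the rank-one factorizations $(-q^{-\delta}\gamma_j[n+j],\gamma_{n-j}[n-j])$ of $t=-q^{-\delta}y^n$ over $\k[y]$, whose cokernels $M_j$ satisfy $\dim_\k M_j=j$ and are therefore pairwise non-isomorphic indecomposables; Theorem \ref{KP} says $\mathscr H$ is a bijection on isomorphism classes of nontrivial indecomposables, so it transports both indecomposability and pairwise non-isomorphism to $TMF_{C^{\xi}}(f)$, and the Zhang-twist equivalence of \cite[Theorem 3.6]{CCKM} carries these to $TMF_C(f)$. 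You mention this route only as an alternative way to construct the matrices; it needs to be the primary argument for indecomposability and distinctness in case ({\tt g}) as well. A minor further point: keep the counting convention consistent --- the number of nontrivial indecomposable factorizations is the McKay quiver rank minus one, the omitted vertex corresponding to the free module.
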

\bigskip

\subsection{The commutative graded case}\label{cgc}

As noted above, Theorem \ref{McKayII} and \cite[Prop. 7.1]{CKWZ0} imply that all hypersurface singularities in Table 1 have finite CM type, and the number of indecomposable MCMs in each case equals the rank of the corresponding McKay quiver. In particular, this holds in case of \emph{commutative Kleinian singularites} as defined at the beginning of this section.  We note that all $ADE$ hypersurface singularities appear in case ({\tt a}). In case ({\tt f}), only the $D$ and $E$ types appear, see \cite[Table 6]{CKWZ0}. The equations and grading defining each hypersurface singularity are as follows (see \cite[pp 87-89]{LW}).

\begin{table}[h] 
\caption{Presentations and gradings for the commutative Kleinian singularities $\k[u,v]^G=\k[x,y,z]/(f)$.}
\label{commKleinSing}
\begin{tabular}{|c|c|c|c|c|}
\hline
Type & $\deg(x)$ & $\deg(y)$ & $\deg(z)$ & $f$\\
\hline
\hline
$A_n, n\ge 1$ & $n+1$ & $n+1$ & $2$ & $x^2+y^2+z^{n+1}$\\
\hline
$D_n, n\ge 4$ & $2n-2$ & $2n-4$ & $4$ & $x^2+y^2z+z^{n-1}$\\
\hline
$E_6$ & 12 & 8 & 6 & $x^2+y^3+z^4$\\
\hline
$E_7$ & 18 & 12 & 8 & $x^2+y^3+yz^3$\\
\hline
$E_8$ & 30 & 20 & 12 & $x^2+y^3+z^5$\\
\hline
\end{tabular}
\end{table}

The fixed rings appearing cases ({\tt d}) ($n$ even) and ({\tt e}) are isomorphic to rings in the table above. In each case we have $n\ge 3$. In case ({\tt d}) ($n$ even), setting $x=a_1$, $y=a_2$, $z=a_3$ yields a commutative Kleinian singularity of type $D_{(n+4)/2}$. In case $({\tt e})$, setting $x=(-1)^na_1$, $y=a_2$, and $z=a_3$ yields a commutative Kleinian singularity of type $A_{2n-1}$.

To complete Goal II in the commutative cases, it suffices to present families of graded matrix factorizations that represent non-isomorphic indecomposable MCMs for each hypersurface listed in Table \ref{commKleinSing}.  These families were described in \cite{GSV84} (see also \cite[pp. 153-158]{LW}), and we will not reproduce them here. The interested reader will find it straightforward to assign degrees to free module generators so that the maps defined by the matrices given in \cite{GSV84} or \cite{LW} are homogeneous of degree 0. We conclude by observing that these factorizations are indecomposable and nonisomorphic in the graded category.

Following \cite{GSV84}, let $\rho$ be a finite dimensional, irreducible representation of $G$, and let $E_{\rho}$ denote the corresponding simple left $\k[G]$-module. Let $C=\k[u, v]$ and let $M_{\rho} = \Hom_{\k[G]}(E_{\rho}, C)$. Note that $M_{\rho}$ is a left $C^G$-module and, since $E_{\rho}$ is simple, $M_{\rho}$ inherits an $\N$-grading from $C$ compatible with the action of $C^G$. In fact, $\{ M_{\rho}\ |\ \text{$\rho$ irreducible}\}$ is a complete set of non-isomorphic, indecomposable MCMs in the category of graded $C^G$-modules. To see this, we first note that as graded left $C^G$-modules, $M_{\rho} \cong (C\tsr E_{\rho^*})^G$ where $\rho^*$ is the dual representation; here $G$ acts diagonally on the tensor product. Now, by \cite[Lemma 2.7]{CKWZ2}, the set $\{C\tsr E_{\rho^*}\ |\ \text{$\rho$ irreducible}\}$ is a complete set of nonisomorphic, indecomposable projectives over the smash product $C\# G$. By \cite[Corollary 5.16]{LW}, $(C\tsr E_{\rho^*})^G$ is an indecomposable $C^G$-module, so $M_{\rho}$ is an indecomposable graded $C^G$-module. Finally, Corollary 4.6 and Remark 4.7 of \cite{GSV84} show that $M_{\rho}$ has a minimal resolution by finite rank graded free $C^G$-modules that is periodic of period 2. It follows from \cite[Theorem 4.7]{CCKM} that $M_{\rho}$ is a MCM $C^G$-module. The matrices in the minimal resolution described in \cite{GSV84} are the desired graded factorizations.
\bigskip

\noindent {\bf Acknowledgements.} The authors thank the anonymous referee for the detailed feedback that improved the exposition of this manuscript greatly.

A. Conner would like to thank Mark E. Walker, Michael K. Brown, and Peder Thompson for numerous helpful conversations while he was visiting the University of Nebraska-Lincoln as an Early-Career Faculty member in the IMMERSE program (NSF DMS-0354281).
C. Walton is partially supported  by the US National Science Foundation with grants \#DMS-1663775 and 1903192 and by a research fellowship from the Alfred P. Sloan foundation. She would like to thank the staff and her hosts for her research visit to Wake Forest University in May 2017 where part of this work was completed. 

\bibliography{NCKnorrer-biblio}
 
\end{document}